\UseRawInputEncoding
\documentclass[12pt, reqno]{amsart}
\usepackage[margin=1in]{geometry}
\usepackage{amssymb,latexsym,amsmath,amscd,amsfonts}
\usepackage{latexsym}
\usepackage[mathscr]{eucal}
\usepackage{bm}
\usepackage{mathptmx}
\usepackage{amssymb}
\usepackage{amsthm}
\usepackage{dcolumn}
\usepackage[all]{xy}
\usepackage{enumitem}
\usepackage[utf8]{inputenc}

\def \qed {\hfill \vrule height6pt width 6pt depth 0pt}
\def\textmatrix#1&#2\\#3&#4\\{\bigl({#1 \atop #3}\ {#2 \atop #4}\bigr)}
\def\dispmatrix#1&#2\\#3&#4\\{\left({#1 \atop #3}\ {#2 \atop #4}\right)}
\newcommand{\beg}{\begin{equation}}
	\newcommand{\eeg}{\end{equation}}
\newcommand{\ben}{\begin{eqnarray*}}
	\newcommand{\een}{\end{eqnarray*}}

\newcommand{\C}{\mathbb C}

\newcommand{\N}{\mathbb N}
\newcommand{\T}{\mathbb T}

\newcommand{\E}{\mathbb E}

\newcommand{\Pe}{\mathbb P}
\newcommand{\D}{\mathbb D}
\newcommand{\G}{\mathbb G}

\newcommand{\lm}{\lambda}

\newcommand{\HS}{\mathcal{H}}

\newcommand{\al}{\alpha}
\newcommand{\DC}{\overline{\mathbb{D}}}

\newtheorem{thm}{Theorem}[section]
\newtheorem{cor}[thm]{Corollary}
\newtheorem{lem}[thm]{Lemma}

\newtheorem{prop}[thm]{Proposition}
\numberwithin{equation}{section} \theoremstyle{definition}
\newtheorem{defn}[thm]{Definition}

\newtheorem{eg}[thm]{Example}

\def\textmatrix#1&#2\\#3&#4\\{\bigl({#1 \atop #3}\ {#2 \atop #4}\bigr)}
\def\dispmatrix#1&#2\\#3&#4\\{\left({#1 \atop #3}\ {#2 \atop #4}\right)}

\begin{document}
	
	\title[Realization, interpolation and extension on the pentablock]{Realization, interpolation, extension on the pentablock and applications to $\mathbb D^2$, $\mathbb G_2$} 
	\author{SOURAV PAL AND NITIN TOMAR}
	
	\address[Sourav Pal]{Mathematics Department, Indian Institute of Technology Bombay,
		Powai, Mumbai - 400076, India.} \email{sourav@math.iitb.ac.in}
	
	\address[Nitin Tomar]{Mathematics Department, Indian Institute of Technology Bombay, Powai, Mumbai-400076, India.} \email{tomarnitin414@gmail.com}

	\keywords{Pentablock, symmetrized bidisc, bidisc, realization, interpolation and extension}	
	
	\subjclass[2020]{32A70, 47A13, 47A57, 46E22, 47B32}

	\begin{abstract}
We introduce Schur-Agler class for the pentablock $\mathbb P$ and establish a realization theorem for functions in this class. Then we prove an interpolation theorem for the pentablock with interpolating functions belonging to the corresponding Schur-Agler class. Also, we obtain an extension theorem for $\mathbb P$. Applying these results, we add a few new characterizations in the existing realization and interpolation theorems for the bidisc $\mathbb D^2$ and the symmetrized bidisc $\mathbb G_2$. Also, we give  alternative proofs to the existing extension theorems for $\mathbb D^2, \, \mathbb G_2$.
	\end{abstract}	
	
	\maketitle

	\section{Introduction}
	
	\noindent Throughout the paper, all operators are bounded linear maps acting on complex separable Hilbert spaces. We denote by $\mathcal{B}(\HS)$ the algebra of operators on a Hilbert space $\HS$. Let $\C, \D$ and $\T$ be the complex plane, the unit disc and the unit circle in the complex plane, respectively, with center at the origin. For a commuting tuple of operators $\underline{T}=(T_1, \dotsc, T_n)$, we denote by $\sigma_T(\underline{T})$ its Taylor joint spectrum. The space of all complex-valued holomorphic functions on a domain $\Omega \subseteq \C^d$ is denoted by $\text{Hol}(\Omega)$. A \textit{kernel} $k$ on a non-empty set $X$ is a positive semi-definite map $k: X \times X \to \C$ with $k(x, x) \ne 0$ for every $x \in X$. The reproducing kernel Hilbert space $\mathcal H(k)$ is the completion of the space $\text{span}\{k(\cdot,w):w\in X\}$ with inner product defined by
\[
\left\langle \sum_{i=1}^n c_i k(\cdot,w_i), \sum_{j=1}^m d_j k(\cdot,z_j)\right\rangle
=
\sum_{i=1}^n\sum_{j=1}^m c_i \overline{d}_j k(z_j,w_i).
\]
In the theory of control engineering (e.g., see \cite{Doyle, Francis}), the structured singular value of a $d \times d$ matrix A is a cost function that encodes structural information about the perturbations of $A$. For a linear subspace $E\subseteq M_d(\C)$, the structured singular value is defined by
	\[
	\mu_E(A)=\bigl(\inf\{\|X\|: X\in E,\ \det(I-AX)=0\}\bigr)^{-1}, \quad A\in M_d(\C),
	\]
	with $\mu_E(A)=0$ if $\det(I-AX) \ne 0$ for every $X \in E$. The notion of $\mu_E$ arises naturally in the analysis of systems with structured uncertainty and plays a central role in the $\mu$-synthesis problem. Given distinct points $\alpha_1 , \dots , \alpha_n \in \D$ and matrices $B_1, \dots , B_n \in M_d(\C)$, the $\mu$-synthesis problem seeks the existence an analytic map $F : \D \to M_d(\C)$ that interpolates the given data, i.e., $F(\alpha_i)=B_i$ for $1\leq i \leq n$ and satisfies $\mu_E(F(\lambda))\le 1$ for every $\lm \in \D$. This framework recovers several classical problems as special cases. For example, when $E = M_d(\C)$, $\mu_E$ becomes the operator norm and $\mu$-synthesis problem reduces to the matricial Nevanlinna-Pick interpolation problem \cite{Foias_Frazho}. Similarly, when $E=\{\alpha I : \alpha \in \C\}$, $\mu_E$ coincides with the spectral radius and $\mu$-synthesis problem becomes the spectral Nevanlinna-Pick interpolation problem \cite{AglerI, Costara2005}. The $\mu$-synthesis gives rise to domains such as the symmetrized bidisc $\G_2$, the tetrablock $\E$ and the pentablock $\Pe$. These domains were introduced in \cite{AglerYoung, Abouhajar, AglerIV}, respectively, and are defined as follows:
	\begin{enumerate}
		\item $\G_2=\{(\lm_1+\lm_2, \lm_1\lm_2) \in \C^2: |\lm_1|, |\lm_2|<1 \}$,
		
		\item $\E=\left\{(z^{(1)}, z^{(2)}, z^{(3)}) \in \C^3 : 1-z^{(1)}\al_1-z^{(2)}\al_2+z^{(3)}\al_1\al_2 \neq 0 \ \text{for all} \ \al_1, \al_2 \in \overline{\D}\right\}$,
		
		\item $\mathbb{P}=\{(a_{21}, \text{tr}(A), \det(A)) \in \C^3 : A=[a_{ij}] \in M_2(\C), \|A\|<1\}$.
	\end{enumerate}
	For a better understanding of the readers, we show how $\mu$-synthesis problem gives rise to a domain. For example, when $E$ is the subspace of all $2 \times 2$ upper triangular matrices with equal diagonal entries, it was shown in \cite{AglerIV} that the associated $\mu$-synthesis problem induces $\Pe$. Indeed, for $X=\begin{bmatrix}
		z & w \\
		0 & z
	\end{bmatrix} \in E$ and $A=[a_{ij}] \in M_2(\C)$, $\det(I-AX)=1-\text{tr}(A)z+\det(A)z^2-a_{21}w$. Consequently,
	\begin{align*}
		\mu_E(A)<1  & \iff \det(I-AX) \ne 0 \ \text{for all $X \in E$ with $\|X\| \leq 1$}\\
		& \iff 1-\text{tr}(A)z+\det(A)z^2-a_{21}w \ne 0 \ \text{for all $z \in \overline{\D}$ and $|w| \leq 1-|z|^2$}\\
		& \iff (a_{21}, \text{tr}(A), \det(A)) \in \Pe,
	\end{align*}
	where the last equivalence follows from Theorem 5.2 in \cite{AglerIV}. Thus, $A=[a_{ij}]$ is in the $\mu_E$-unit ball of $M_2(\C)$ if and only if $(a_{21}, \text{tr}(A), \det(A)) \in \Pe$. Consequently, an interpolation problem from $\D$ to $\mu_E$-unit ball can be studied through an analogous interpolation problem from $\D$ to $\Pe$ (see, e.g., Section 12 in \cite{AglerIV}). The boundedness of $\Pe$ makes it easier than the norm-unbounded $\mu_E$-unit ball, motivating the study of domains such as $\G_2, \E$ and $\Pe$.
	
	\smallskip

	The classical Nevanlinna-Pick interpolation theorem \cite{Nevanlinna, Pick} states the following: given distinct points $z_1,\dots,z_n\in\D$ and points $\lambda_1,\dots,\lambda_n\in\overline{\D}$, there exists $f \in \text{Hol}(\D)$ with $\|f\|_{\infty,\D} \leq 1$ and $f(z_i)=\lambda_i$ for $1 \leq i \leq n$ if and only if the Pick matrix 
	\[
	\bigl[(1-\lambda_i\overline{\lambda}_j)(1-z_i\overline{z}_j)^{-1}\bigr]_{i,j=1}^n
	\]
	is positive semi-definite. The interpolation theorem admits an equivalent formulation in terms of the Szeg\H{o} kernel $k_S(z,w) = (1 - z\overline{w})^{-1}$ on the unit disc $\D$, by expressing the Pick matrix in the form
	$ \bigl[(1 - \lambda_i \overline{\lambda}_j)k_S(z_i, z_j)\bigr]_{i,j=1}^n $.
	  This kernel-based formulation provides a natural framework for interpolation on more general domains in $\C^d$. For example, Abrahamse \cite{AbrahamseI} studied a similar interpolation problem on an $m$-holed planar domain $R$, and obtained a necessary and sufficient condition for the existence of an interpolating function in terms of a family of Pick matrices associated with a corresponding family of reproducing kernels on $R$. A more general approach to interpolation via families of kernels can be found in \cite{Jury}. A criterion for Nevanlinna-Pick interpolation problem on the bidisc $\D^2$ was given in \cite{Agler_McCarthyI, Agler_McCarthy} as follows: let $z_1, \dotsc, z_n \in \D^2$ be distinct points, and let $\lambda_1, \dotsc, \lambda_n \in \overline{\D}$. Then there is a holomorphic function $f : \D^2 \to \overline{\D}$ satisfying $f(z_i) = \lambda_i$ for $1 \leq i \leq n$ if and only if
	\[
	\bigl[(1 - \lambda_i \overline{\lambda_j})\, k(z_i, z_j)\bigr]_{i,j=1}^n 
	\]
	is positive semi-definite for every kernel $k$ on $\D^2$ that is holomorphic in the first variable and conjugate holomorphic in the second variable, and for which the coordinate multiplication operators are contractions on the reproducing kernel Hilbert space $\mathcal{H}(k)$. In a general setting, one considers the following interpolation problem: for distinct points $z_1, \dotsc, z_n$ in a domain $\Omega \subset \C^d$ and points $\lm_1, \dotsc, \lm_n$ in $\DC$, does there exist a holomorphic function $f:\Omega \to \C$ satisfying $f(z_i)=\lm_i$ for $1 \leq i \leq n$? Needless to mention, an interpolating function is supposed to satisfy an appropriate norm condition based on the geometry of the domain $\Omega$. 
	
	\smallskip 
	
	It is evident from the literature (mentioned above) that a first step towards solving the Nevanlinna-Pick interpolation problem for a domain is to find a useful characterization of the interpolating functions of the domain. Such a characterization is typically called a \textit{realization formula} (or, \textit{realization theorem}) for that domain. Two popular classes of functions play important role here, the Schur class and Schur-Agler class. The \textit{Schur class} of functions of a domain $\Omega \subset \mathbb C^n$, which is denoted by $S(\Omega)$, is defined by $S(\Omega)=\{f \in \text{Hol}(\Omega) : \|f\|_{\infty, \Omega} \leq 1\}$, where $\|f\|_{\infty, \Omega}=\sup_{z \in \Omega}|f(z)|$. In particular, the Schur class $S(\D)$ of the unit disc $\D$ is the collection of all holomorphic functions $f$ on $\D$ with $\|f\|_{\infty, \D} \leq 1$. Similarly, one defines the Schur class $S(\D^2)$ for the bidisc. An important subclass of $S(\D^2)$ is the \textit{Schur-Agler} class  given by
\[
SA(\D^2)=\{g \in \text{Hol}(\D^2) : \|g(T_1, T_2)\| \leq 1 \ \text{for all commuting pairs $(T_1, T_2)$ of strict contractions}\}.
\]
In the classical Nevanlinna-Pick interpolation theorem on $\D$, the interpolating function $f$ belongs to the Schur class $S(\D)$ of $\D$. The corresponding realization theorem (see \cite{Agler_McCarthy}) states that $f \in S(\D)$ if and only if there exist a Hilbert space $\HS$ and a unitary 
	\[
	V=\begin{bmatrix} A & B \\ C & D \end{bmatrix}: \C \oplus \HS \to \C \oplus \HS
	\]
	such that $f(z)=A+zB(I_\HS-zD)^{-1}C$. Realization formulas for Schur 
	class functions have been established for various domains, including the annulus \cite{Drit2007_I} and the bidisc \cite{Agler1990, Agler_McCarthy}. In case of the polydisc $\mathbb{D}^d$ for $d \geq 3$, not every Schur class function admits a realization formula. However, its subclass, known as the Schur-Agler class, possess a realization formula, e.g., see \cite{Agler_McCarthy}. This idea has been further extended to a more abstract framework in \cite{Drit2007_I, Drit2007_II, Ball_Guerra}, where a domain $\Omega$ is replaced by an arbitrary set $X$ and the Schur class is replaced by a class of functions determined by a prescribed family of test functions on $X$. Moreover, the realization and interpolation theorems for the symmetrized bidisc $\G_2$ were obtained in \cite{AglerYoung2017, Tirtha_Sau} with different methods. For the tetrablock $\E$, the realization and interpolation theorems were obtained by the authors of \cite{Jain}.
	
	\smallskip

	The interpolation theorem often serves as a fundamental tool in obtaining extension theorem for a domain. This phenomenon has been observed in the case of the bidisc \cite{Agler_McCarthy_2003}, the symmetrized bidisc \cite{Tirtha_Sau} and the tetrablock \cite{Jain}. Let $V$ be a nonempty subset of a domain $\Omega \subset \mathbb{C}^d$. A function $f: V \to \mathbb{C}$ is said to be holomorphic if it extends to a holomorphic function in a neighbourhood of $V$. A natural question is if any such function admits a norm-preserving extension to $\Omega$. This is called the (norm-preserving) \textit{extension problem} for a domain $\Omega$. 
	
\smallskip 
	
The main aim of this article is to achieve realization, interpolation and extension theorems for the pentablock $\Pe$. As a consequence, we recover the (previously proved) same results for the bidisc $\D^2$ and the symmetrized bidisc $\G_2$. The realization and interpolation theorems on $\Pe$ are established in Section \ref{sec_penta}, and the extension theorem for $\Pe$ is proved in Section \ref{sec_ext}. Also, we present in Section \ref{sec_appl} the corresponding results for $\D^2$ and $\G_2$. To begin with, we introduce the class $\mathfrak{M}_\Pe$, which is the collection of all commuting operator triples $\underline{T}=(T_1, T_2, T_3)$ satisfying
	\[
	\|T_2\|<2, \quad \|(2\al T_3-T_2)(2-\al T_2)^{-1}\|<1 \quad \text{and} \quad \|(1-|\al|^2)T_1(I-\al T_2-\al^2T_3)^{-1}\|<1
	\]
	for all $\al \in \DC$. The motivation for introducing such a class of operator triples is discussed in Section \ref{sec_penta}. Also, it is shown in the same section that the joint spectrum $\sigma_T(\underline{T}) \subseteq \Pe$ for all $\underline{T} \in \mathfrak{M}_\Pe$ ensuring the existence of a functional calculus $f(\underline{T})$ for all $f \in \text{Hol}(\Pe)$. Next, we define the Schur-Agler class $SA(\Pe)$ of $\Pe$ as
	\[
	SA(\Pe)=\left\{f \in \text{Hol}(\Pe) : \|f(\underline{T})\| \leq 1 \ \text{for all} \ \underline{T} \in \mathfrak{M}_\Pe\right\}.
	\]
We present in Theorem \ref{thm_realization_P} a realization theorem for $\Pe$, which is precisely the characterizations of functions belonging to Schur-Agler class $SA(\Pe)$. Furthermore, we obtain in Theorem \ref{thm_interpolation_P} necessary and sufficient conditions for a function $f \in SA(\Pe)$ to solve an $n$-points interpolation problem on $\Pe$ with values in $\DC$. This is referred to as a Nevanlinna-Pick type interpolation theorem for $\Pe$. To obtain an extension theorem for $\Pe$, we first introduce in Section \ref{sec_ext} the class $Q\Pe$ consisting of all commuting Hilbert space operator triples $\underline{T}=(T_1, T_2, T_3)$ with $\sigma_T(\underline{T}) \subseteq \Pe$ and satisfying
\[
\|T_2\| \leq 2, \quad \|(2\al T_3-T_2)(2-\al T_2)^{-1}\| \leq 1 \quad \text{and} \quad \|(1-|\al|^2)T_1(I-\al T_2-\al^2T_3)^{-1}\| \leq 1
\]
for all $\al \in \DC$. The class $Q\Pe$ is referred to as the \textit{quantum pentablock}, in the spirit of Mittal’s work \cite{Mittal} on quantized domains in $\C^d$. Note that $Q\Pe$ strictly contains $\mathfrak{M}_\Pe$. For $W \subseteq \Pe$, we define the notion of `subordinate to $W$' suitably in the setting of $\Pe$ and consider the operator triples in $Q\Pe$ that are subordinate to $W$. Finally, we prove our extension result as Theorem \ref{thm_ext_P}, which states that if $W \subseteq \Pe$ and $f$ is a bounded non-zero function on 
$W$ that admits a holomorphic extension to a neighbourhood of $W$, then there exists $g \in H^\infty(\Pe)$ with $g|_W = f$, $\|g\|_{\infty,\Pe} = \|f\|_{\infty,W}$ and $g/\|f\|_{\infty,W} \in SA(\Pe)$ if and only if  
$
\|f(\underline{T})\| \le \|f\|_{\infty,W}
$ 
for every $\underline{T} \in Q\Pe$ subordinate to $W$. The pentablock has several interesting geometric properties, e.g., see \cite{AglerIV, Pal_penta} and the references therein. In particular, one such property is its connection with the domains $\D^2$ and $\G_2$, as described below.
	
	\begin{thm}[\cite{Pal_penta}, Section 3]\label{thm_connect_P}
		A pair $(z^{(1)}, z^{(2)}) \in \D^2$ if and only if $(z^{(1)}, 0, z^{(2)}) \in \Pe$. Also, $(z^{(1)}, z^{(2)}) \in \G_2$ if and only if $(0, z^{(1)}, z^{(2)}) \in \Pe$.
	\end{thm}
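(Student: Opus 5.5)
For the second assertion I would work directly from the definition $\Pe=\{(a_{21},\mathrm{tr}(A),\det(A)) : \|A\|<1\}$. Given $(z^{(1)},z^{(2)})=(\lm_1+\lm_2,\lm_1\lm_2)\in\G_2$ with $|\lm_1|,|\lm_2|<1$, I take the diagonal matrix $A=\mathrm{diag}(\lm_1,\lm_2)$. Then $\|A\|=\max|\lm_i|<1$, $a_{21}=0$, $\mathrm{tr}(A)=z^{(1)}$ and $\det(A)=z^{(2)}$, so $(0,z^{(1)},z^{(2)})\in\Pe$. Conversely, suppose $(0,z^{(1)},z^{(2)})$ comes from some $A$ with $\|A\|<1$ and $a_{21}=0$. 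Then $A$ is upper triangular, and its eigenvalues are $a_{11},a_{22}$. Each eigenvalue has modulus at most $\|A\|<1$, so $(z^{(1)},z^{(2)})=(a_{11}+a_{22},a_{11}a_{22})\in\G_2$.

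For the first assertion, the easy direction again uses the definition. If $(z^{(1)},0,z^{(2)})$ arises from $A$ with $\|A\|<1$, then $|z^{(1)}|=|a_{21}|=|\langle Ae_1,e_2\rangle|\le\|A\|<1$. Also $|z^{(2)}|=|\det A|=s_1s_2$, the product of the singular values of $A$, so $|z^{(2)}|\le\|A\|^2<1$. Hence $(z^{(1)},z^{(2)})\in\D^2$.

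The converse is the only step needing real input, because an explicit matrix is awkward to produce. For instance, $A=\begin{bmatrix}0&-z^{(2)}/z^{(1)}\\ z^{(1)}&0\end{bmatrix}$ fails when $|z^{(2)}|\ge|z^{(1)}|$. Instead I would invoke the characterization of $\Pe$ from \cite{AglerIV}: $(a,s,p)\in\Pe$ if and only if $(s,p)\in\G_2$ and
\[
|a|<\Bigl|1-\tfrac12 s\overline{\beta}\Bigr|+\sqrt{\bigl(1-\tfrac14|s|^2\bigr)\bigl(1-|\beta|^2\bigr)}\,\bigl(\text{up to the precise form}\bigr),\qquad \beta=\frac{s-\overline{s}p}{1-|p|^2}.
\]
With $s=0$ and $|p|<1$, the point $(0,p)=(\sqrt{-p}+(-\sqrt{-p}),\,-(-p))$ lies in $\G_2$ and $\beta=0$, so the bound reduces to $|a|<1$. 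This gives $(z^{(1)},0,z^{(2)})\in\Pe$ exactly when $|z^{(1)}|<1$ and $|z^{(2)}|<1$.

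As a cross-check, or as an alternative route if one prefers the scalar form of the conditions defining $\mathfrak{M}_\Pe$, I would verify three inequalities for all $\al\in\DC$ with $s=0$ and $p=z^{(2)}$. First, $|s|=0<2$. Second, $|2\al p/2|=|\al p|<1$. Third, $(1-|\al|^2)|z^{(1)}|<|1+\al^2 p|$. The third follows since $|1+\al^2p|\ge 1-|\al|^2|p|\ge 1-|\al|^2$, which exceeds $(1-|\al|^2)|z^{(1)}|$ when $|\al|<1$. When $|\al|=1$, the left side is $0$ and the right side is at least $1-|p|>0$. The main care point is quoting the correct form of the Agler–Lykova–Young characterization. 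I expect only the $\beta=0$ specialization to matter.
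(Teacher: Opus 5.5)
Your proof is correct, but the part that actually carries the hard implication is your ``cross-check.'' The hedged formula you attribute to \cite{AglerIV} is wrong as written: at $s=\beta=0$ it gives $|a|<1+1=2$, not $|a|<1$. The actual bound there is $|a|<\bigl|1-\tfrac{\frac12 s\overline{\beta}}{1+\sqrt{1-|\beta|^2}}\bigr|$, which does reduce to $|a|<1$. Drop the hedged formula and make the scalar verification via \eqref{eqn_P} the proof of that implication: $(0,p)\in\G_2$ by \eqref{eqn_G_2}, and $|\psi_\al(a,0,p)|\le|a|<1$ for $|\al|<1$, with $\psi_\al(a,0,p)=0$ on $\T$. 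Your three matrix arguments (diagonal matrix, eigenvalues of an upper-triangular $A$, and $|a_{21}|\le\|A\|$, $|\det A|=s_1s_2\le\|A\|^2$) are all fine.

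The paper gives no proof of this statement; it quotes it from \cite{Pal_penta}. The route its own text supports is to read both assertions off \eqref{eqn_P}. Since $\psi_\al(0,s,p)\equiv 0$, we get $(0,s,p)\in\Pe$ iff $(s,p)\in\G_2$. For $(a,0,p)$, one has $\max_{\al\in\DC}|\psi_\al(a,0,p)|=|a|$ (attained at $\al=0$), while $(0,p)\in\G_2$ iff $|p|<1$. That is shorter, but it rests entirely on the nontrivial characterization of $\Pe$ from \cite{AglerIV}. You instead use the matrix definition for three of the four implications, which is more elementary, and fall back on \eqref{eqn_P} only for the fourth.

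In fact the fourth implication also has an explicit matrix, so \eqref{eqn_P} can be avoided altogether. If $|a|^2>|p|$, your anti-diagonal matrix works, because $|p|/|a|<|a|<1$. If $|a|^2\le|p|$, write $p=\omega|p|$ with $|\omega|=1$ and take
\[
A=\begin{bmatrix} x & -\omega\overline{a}\\ a & -x\end{bmatrix},\qquad x^2=-\omega\bigl(|p|-|a|^2\bigr).
\]
Then $A^*A=|p|I$ (the off-diagonal entry vanishes since $x=-\omega\overline{x}$), so $\|A\|=\sqrt{|p|}<1$. Also $a_{21}=a$, $\mathrm{tr}(A)=0$, and $\det A=-x^2+\omega|a|^2=p$. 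This makes the whole theorem a direct consequence of the definition of $\Pe$.
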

	Capitalizing on Theorem \ref{thm_connect_P}, we obtain in Section \ref{sec_appl} the realization and interpolation theorems for $\D^2$ as in \cite{Agler_McCarthyI, Agler_McCarthy} and $\mathbb G_2$ as in \cite{AglerYoung2017, Tirtha_Sau}, with our characterizations formulated in terms of functions in $SA(\Pe)$. In addition, the extension theorems on $\D^2$ and $\G_2$ established as in \cite{Agler_McCarthy_2003} and \cite{Agler2019, Tirtha_Sau}, respectively, are recovered from the extension theorem on $\Pe$. This provides a unified approach to the realization, interpolation and extension theorems for $\D^2$ and $\G_2$ through $\Pe$.

	\section{Realization and interpolation theorem on the pentablock}\label{sec_penta}
	
	\noindent In this section, we introduce a Schur-Agler type class for the pentablock $\Pe$ and present a realization theorem for functions belonging to this class. Furthermore, we prove an interpolation theorem for $\Pe$, where the interpolating function is a member of the Schur-Agler class for $\Pe$. To begin with, we recall from \cite{AglerIV} the characterization of $\Pe$ given as follows:
	\[
	\mathbb P= \left\{(z^{(1)}, z^{(2)}, z^{(3)}) \in \mathbb{C} \times \mathbb{G}_2 :
	\sup_{\alpha \in \mathbb{D}}
	\left|\frac{(1-|\al|^2)z^{(1)}}{1- z^{(2)}\al+z^{(3)}\al^2}\right|<1 \right\},
	\]
	where $\G_2=\{(\lm_1+\lm_2, \lm_1\lm_2): \lm_1, \lm_2 \in \D\}$. The closure of $\G_2$ is denoted by $\Gamma$. The following characterization of $\G_2$ was obtained by the authors of \cite{Agler2004_II} in terms of a family of functions parameterized over $\DC$:
	\begin{equation}\label{eqn_G_2}
		(z^{(2)}, z^{(3)}) \in \G_2 \ \  \text{if and only if}	\ |z^{(2)}|<2, \ \ |\Phi_\alpha(z^{(2)}, z^{(3)})|<1 \ \text{for all} \ \al \in \DC, 
	\end{equation}
	where $\Phi_\alpha(z^{(2)}, z^{(3)})=(2\al z^{(3)}-z^{(2)})\slash (2-\al z^{(2)})$. Hence, $z=(z^{(1)}, z^{(2)}, z^{(3)}) \in \Pe$ if and only if 
	\begin{equation}\label{eqn_P}
		|z^{(2)}|<2, \quad |\Phi_\alpha(z^{(2)}, z^{(3)})|<1 \quad \text{and} \quad |\psi_{\al}(z^{(1)}, z^{(2)}, z^{(3)})|<1
	\end{equation}
	for all $\al \in \DC$, where 
	\[
	\psi_{\al}(z^{(1)}, z^{(2)}, z^{(3)})=\frac{(1-|\al|^2)z^{(1)}}{1- z^{(2)}\al+z^{(3)}\al^2}.
	\]
	Also, $\G_2$ and $\Pe$ are quasi-balanced domains, i.e., $(rw^{(2)}, r^2w^{(3)}) \in \G_2$ and $(rz^{(1)}, rz^{(2)}, r^2z^{(3)}) \in \Pe$ for $0 \leq r \leq 1, (w^{(2)}, w^{(3)}) \in \G_2$ and $(z^{(1)}, z^{(2)}, z^{(3)}) \in \Pe$. Using the description of $\Pe$ as in \eqref{eqn_P}, we define for $z = (z^{(1)}, z^{(2)}, z^{(3)}) \in \Pe$ the following functions:
	\begin{align}\label{eqn_J(z)}
		\mathrm{J}(z), \ \mathrm{j}(z): \DC \to \C, \quad \mathrm{J}(z)(\al)=\psi_{\al}(z^{(1)}, z^{(2)}, z^{(3)}) \quad \text{and} \quad \mathrm{j}(z)(\al) =\Phi_\al(z^{(2)}, z^{(3)}).
	\end{align}
	Clearly, $\mathrm{J}(z)$ and $\mathrm{j}(z)$ are continuous functions on $\DC$, and $\|\mathrm{J}(z)\|_{\infty, \DC}, \|\mathrm{j}(z)\|_{\infty, \DC}<1$ for $z \in \Pe$. We now introduce an operator-theoretic analog of the inequalities provided in \eqref{eqn_P}. Let $(T_1, T_2, T_3)$ be a commuting triple of operators acting on a Hilbert space $\HS$ and let $\|T_2\|<2$. Evidently, the operator $(2-\alpha T_2)$ is invertible for all $\al \in \DC$. Additionally, assume that
	\[
	\|\Phi_\al(T_2, T_3)\|=\|(2\al T_3-T_2)(2-\al T_2)^{-1}\|<1 
	\]
	for all $\al \in \DC$. We consider the class of commuting pairs of operators given by 
	\[
	\mathfrak{M}_{\G_2}=\left\{(T_2, T_3): \|T_2\|<2, \|\Phi_\al(T_2, T_3)\|<1 \ \text{for all} \ \al \in \DC\right\},
	\]
	which was studied by the authors of \cite{AglerYoung2017} (also see \cite{Tirtha_Sau}). The inequalities in $\mathfrak{M}_{\G_2}$ can be interpreted as an operator theoretic analog of inequalities from \eqref{eqn_G_2} in which the scalars are replaced by commuting operator pairs subjected to similar norm bounds. The class $\mathfrak{M}_{\G_2}$ contains $\G_2$ in the sense that $(z^{(2)}I, z^{(3)}I) \in \mathfrak{M}_{\G_2}$ for all $(z^{(2)}, z^{(3)}) \in \G_2$, which follows from \eqref{eqn_G_2}. Let $(T_2, T_3) \in \mathfrak{M}_{\G_2}$ and $(w^{(2)}, w^{(3)}) \in \sigma_T(T_2, T_3)$. By projection property of the joint spectrum, $w^{(2)} \in \sigma(T_2)$ and so, $|w^{(2)}|<2$ since $\|T_2\|<2$. For $\al \in \DC$, we have that $\|\Phi_\al(T_2, T_3)\|<1$. It now follows from spectral mapping principle that
	$
	\Phi_\al(w^{(2)}, w^{(3)}) \in \Phi_\al(\sigma_T(T_2, T_3))=\sigma(\Phi_\al(T_2, T_3))\subseteq \D.
	$
	By \eqref{eqn_G_2}, $(w^{(2)}, w^{(3)}) \in \G_2$. Consequently, $\sigma_T(T_2, T_3) \subseteq \G_2$ and so, one can define the operator $f(T_2, T_3)$ for all $f \in \text{Hol}(\G_2)$ and $(T_2, T_3) \in \mathfrak{M}_{\G_2}$. Accordingly, the Schur-Agler class for $\G_2$ can be defined as
	\[
	SA(\G_2)=\left\{f \in \text{Hol}(\G_2): \|f(\underline{T})\| \leq 1 \ \text{for all} \ \underline{T} \in \mathfrak{M}_{\G_2}\right\},
	\]
	which is a subset of the Schur class $S(\G_2)=\{f \in \text{Hol}(\G_2): |f(z)| \leq 1 \ \text{for all} \ z \in \G_2\}$. In fact, the Schur and Schur-Agler classes of $\G_2$ are same (e.g., see \cite{AglerYoung2017, AglerYoung, Tirtha_Sau} and Theorem 1.5 in \cite{AglerYoung2003}). We briefly sketch an argument here from the literature. Let $f \in S(\G_2)$ and $\underline{T}=(T_2, T_3) \in \mathfrak{M}_{\G_2}$. For $0<r<1$, define $f_r(z^{(2)}, z^{(3)})=f(rz^{(2)}, r^2z^{(3)})$, which is holomorphic on a neighbourhood of the closed symmetrized bidisc $\Gamma$. We have by Theorem 1.5 in \cite{AglerYoung2003} that $(T_2, T_3)$ has $\Gamma$ as a spectral set, i.e., $\|g(T_2, T_3)\| \leq \|g\|_{\infty, \Gamma}$ for every function $g$ holomorphic in a neighbourhood of $\Gamma$. Then 
	\[
	\|f_r(T_2, T_3)\| \leq \sup \{|f_r(z^{(1)}, z^{(2)})|: (z^{(2)}, z^{(3)}) \in \Gamma\} \leq \sup \{|f(z^{(1)}, z^{(2)})|: (z^{(2)}, z^{(3)}) \in \G_2\}  \leq 1.
	\]
	Letting $r \to 1$ yields that $\|f(T_2, T_3)\| \leq 1$ and so, $f \in SA(\G_2)$. Thus, $SA(\G_2)=S(\G_2)$.

	\smallskip 	
	
	The domain $\Pe$ is fibered over $\G_2$, that is, the last two components of a scalar triple in $\Pe$ belong to $\G_2$. In the same spirit, we construct the class of commuting triples $(T_1, T_2, T_3)$ associated with $\Pe$, where the last two components $(T_2, T_3) \in \mathfrak{M}_{\G_2}$. For this purpose, let $\al \in \DC$ and consider the polynomial
	$
	p(z^{(2)}, z^{(3)})=1-z^{(2)}\al+z^{(3)}\al^2.
	$
	By \eqref{eqn_G_2}, $p$ is non-vanishing on $\G_2$ and, in particular, on $\sigma_T(T_2, T_3)$ for every $(T_2, T_3) \in \mathfrak{M}_{\G_2}$. By spectral mapping principle, the operator 
	$
	p(T_2, T_3)=I-\al T_2+\al^2T_3
	$
	is invertible for every $\al \in \DC$. Consequently, one can define the operator 
	\[
	\psi_{\al}(T_1, T_2, T_3)=(1-|\al|^2)T_1(I-\al T_2+\al^2T_3)^{-1}
	\]
	for all $\al \in \DC$ and commuting triples $(T_1, T_2, T_3)$ of Hilbert space operators such that $(T_2, T_3) \in \mathfrak{M}_{\G_2}$. In light of this, we introduce the class $\mathfrak{M}_\Pe$ consisting of all commuting triples $\underline{T}=(T_1, T_2, T_3)$ of Hilbert space operators such that 
	for all $\al \in \DC$,
	\[
	\|T_2\| <2, \quad \|\Phi_{\al}(T_2, T_3)\|<1 \quad \text{and} \quad \|\psi_{\al}(T_1, T_2, T_3)\|<1.
	\]
	We mention the following properties of $\mathfrak{M}_\Pe$, which will be used frequently throughout the section.
	\begin{enumerate}[leftmargin=*]
		
		\item $\mathfrak{M}_\Pe$ contains $\Pe$ in the sense that $(z^{(1)}I, z^{(2)}I, z^{(3)}I) \in \mathfrak{M}_\Pe $ for all $(z^{(1)}, z^{(2)}, z^{(3)}) \in \Pe$.
		
		\item $\mathfrak{M}_\Pe$ is closed under taking adjoint, i.e., $(T_1, T_2, T_3) \in \mathfrak{M}_\Pe$ if and only if $(T_1^*, T_2^*, T_3^*) \in \mathfrak{M}_\Pe$.
		
		\item Since $\Pe$ is a $(1, 1, 2)$-quasi-balanced domain, it follows that 
		$(rT_1, rT_2, r^2T_3) \in \mathfrak{M}_\Pe$ for $0 \leq r \leq 1$ and $(T_1, T_2, T_3) \in \mathfrak{M}_\Pe$.
		
		\item For a commuting operator triple $(T_1, T_2, T_3) \in \mathfrak{M}_\Pe$, it is evident that $(T_2, T_3) \in \mathfrak{M}_{\G_2}$.
		
		\item It is not difficult to prove that $\sigma_T(T_1, T_2, T_3) \subseteq \Pe$ for all $(T_1, T_2, T_3) \in \mathfrak{M}_\Pe$. To see this, let $\underline{T}=(T_1, T_2, T_3) \in \mathfrak{M}_\Pe$ and let $(w^{(1)}, w^{(2)}, w^{(3)}) \in \sigma_T(\underline{T})$. By projection property of the joint spectrum, $(w^{(2)}, w^{(3)}) \in \sigma_T(T_2, T_3)$. As $(T_2, T_3) \in \mathfrak{M}_{\G_2}$, it follows that $(w^{(2)}, w^{(3)}) \in \G_2$. For $\alpha \in \DC$, $\|\psi_{\alpha}(T_1, T_2, T_3)\|<1$. We have by spectral mapping principle that
		$
		\psi_{\alpha}(w^{(1)}, w^{(2)}, w^{(3)}) \in \psi_{\alpha}(\sigma_T(T_1, T_2, T_3))=\sigma(\psi_{\alpha}(T_1, T_2, T_3) \subseteq \D 
		$
		and by \eqref{eqn_P}, $(w^{(1)}, w^{(2)}, w^{(3)}) \in \Pe$. 
	\end{enumerate}
	The above discussion shows that the functional calculus $f(\underline{T})$ is well defined for all $f \in \mathrm{Hol}(\mathbb{P})$ and $\underline{T} \in \mathfrak{M}_\mathbb{P}$. In this direction, we consider the following class: 
	\[
	SA(\Pe)=\left\{f \in \text{Hol}(\Pe) : \|f(\underline{T})\| \leq 1 \ \text{for all} \ \underline{T} \in \mathfrak{M}_\Pe\right\},
	\]
	which we call the Schur-Agler class for the pentablock. Evidently, $SA(\Pe) \subseteq S(\Pe)$.  For a non-empty set $Y$, a map $f: Y \times Y \to \C$ is said to be \textit{self-adjoint} on $Y$ if $f(z, w)=\overline{f(w, z)}$ for all $z, w \in Y$, and $f$ is called \textit{positive semi-definite} (written $f \succcurlyeq 0$) if
	$
	\sum_{i,j=1}^n \bar{c}_ic_j f(y_i, y_j) \geq 0$
	for every $\{y_1,\dotsc, y_n\} \subseteq Y, \{c_1,\ldots,c_n\} \subset \C$ and $n \in \N$.  A positive semi-definite function $k: \Pe \times \Pe \to \C$ is referred to as a \textit{weak kernel} on $\Pe$. In addition, if $k(z, z) \ne 0$ for all $z \in \Pe$, we say that $k$ is a \textit{kernel} on $\Pe$. For a subset $F$ of $\Pe$, a function $\xi: F \times F \to C(\DC)^*$ is said to be a positive kernel with values in the dual space $C(\DC)^*$ if the following holds:
	\[
	\overset{n}{\underset{i, j=1}{\sum}}\overline{c}_ic_j\xi(z_i, z_j)(f_i \overline{f}_j) \geq 0
	\]  
	for every $n \in \N, \{z_1, \dotsc, z_n\} \subseteq F, \{c_1, \dotsc, c_n\} \subset \C$ and $\{f_1, \dotsc, f_n\} \subseteq C(\DC)$. We denote by $C(\DC)_F^+$ the set of all positive kernels with values in $C(\DC)^*$, and the set of all positive semi-definite functions $\delta: F \times F \to \C$ is denoted by $\C_F^+$.
	
	\begin{defn}\label{defn_AK_P}
		A kernel (weak kernel) $k: \Pe \times \Pe \to \C$ is said to be \textit{admissible} (\textit{weakly admissible}) if the following hold:
		\begin{enumerate}
			\item $\displaystyle \left(1-\frac{z^{(2)}\overline{w}^{(2)}}{4}\right)k(z, w) \succcurlyeq 0$; \smallskip 
			\item $\left(1-\Phi_\al(z^{(2)}, z^{(3)})\overline{\Phi_\al( w^{(2)}, w^{(3)})}\right)k(z, w) \succcurlyeq 0$ for all $\al \in \DC$; \smallskip 
			\item $\left(1-\psi_{\al}(z^{(1)}, z^{(2)}, z^{(3)})\overline{\psi_{\al}(w^{(1)}, w^{(2)}, w^{(3)})}\right)k(z, w) \succcurlyeq 0$ for all $\al \in \DC$.
		\end{enumerate}
		The class of admissible kernels on $\Pe$ is denoted by $AK(\Pe)$.	
	\end{defn}
	
	As a first step towards establishing a realization theorem for $SA(\Pe)$, we prove the following result. Although the argument is well-known and appears in the literature (see, for example, \cite{Agler_McCarthy, Drit2007_I, Drit2007_II, Jain}), we include its proof here for completeness and to adapt it to the present setting.
	
	\begin{prop}\label{prop_prelim_I_P}
		Let $F$ be a subset of $\Pe$. If $\xi \in C(\DC)_F^+$, then there exist a Hilbert space $\HS$, a function $L: F \to \mathcal{B}(C(\DC),\HS)$ and a unital $*$-representation $\rho: C(\DC) \to \mathcal{B}(\HS)$ such that
		\[
		\xi(z, w)(f\overline{g})=\langle L(z)f, L(w)g \rangle \quad \text{and} \quad L(z)(fg)=\rho(f)L(z)(g)
		\]	
		for all $f, g \in C(\DC)$ and $z, w  \in F$. 
	\end{prop}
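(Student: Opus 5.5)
This is a GNS-type construction; we build $\HS$ directly from the positive kernel $\xi$. Consider the algebraic tensor product $\mathcal{V}=\text{span}\{\delta_z\otimes f : z\in F, f\in C(\DC)\}$, i.e.\ finitely supported functions $F\to C(\DC)$. On it we define a sesquilinear form by
\[
\Big\langle \sum_i \delta_{z_i}\otimes f_i, \sum_j \delta_{w_j}\otimes g_j\Big\rangle_0=\sum_{i,j}\xi(z_i,w_j)(f_i\overline{g}_j).
\]
Positivity of this form is exactly the defining inequality of $\xi\in C(\DC)_F^+$, after merging repeated points; we take $c_i=1$ and absorb scalars into the $f_i$. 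Hermitian symmetry follows from polarization of positivity. We then quotient by the null space $\mathcal N=\{v:\langle v,v\rangle_0=0\}$, which is a subspace by Cauchy--Schwarz, and complete to obtain $\HS$. We set $L(z)f=[\delta_z\otimes f]$. This immediately gives $\xi(z,w)(f\overline g)=\langle L(z)f,L(w)g\rangle$, and $L(z)$ is linear.

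Next, for $h\in C(\DC)$ we define $\rho(h)$ on the dense set by $\rho(h)[\sum_i\delta_{z_i}\otimes f_i]=[\sum_i\delta_{z_i}\otimes hf_i]$. The main obstacle is showing that $\rho(h)$ is well defined and bounded, with $\|\rho(h)\|\le\|h\|_\infty$. The plan is to write $\|h\|_\infty^2-|h|^2=|k|^2$ with $k=(\|h\|_\infty^2-|h|^2)^{1/2}\in C(\DC)$. For $v=\sum_i\delta_{z_i}\otimes f_i$ this gives
\[
\|h\|_\infty^2\langle v,v\rangle_0-\langle \rho(h)v,\rho(h)v\rangle_0=\sum_{i,j}\xi(z_i,z_j)\big((kf_i)\overline{(kf_j)}\big)\ge 0,
\]
by positivity of $\xi$ applied to the functions $kf_i$. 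This one inequality yields both that $\rho(h)$ preserves $\mathcal N$ (so it is well defined on the quotient) and that it is bounded, so it extends to $\HS$.

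It remains to verify the algebraic properties, all of which are checked on the dense set. The map $\rho$ is linear, multiplicative and unital because multiplication in $C(\DC)$ is. The $*$-property $\langle\rho(h)v,u\rangle=\langle v,\rho(\overline h)u\rangle$ follows from $\xi(z,w)((hf)\overline g)=\xi(z,w)(f\overline{\overline h g})$. The intertwining relation $L(z)(fg)=\rho(f)L(z)g$ holds by definition. Boundedness of each $L(z)$, needed for $L(z)\in\mathcal B(C(\DC),\HS)$, comes from $\|L(z)f\|^2=\xi(z,z)(|f|^2)\le\|\xi(z,z)\|\,\|f\|_\infty^2$, where $\xi(z,z)$ is a bounded functional.
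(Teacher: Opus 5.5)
Your proposal is correct and follows essentially the same route as the paper. The paper applies the Kolmogorov factorization (Theorem 2.53 of \cite{Agler_McCarthy}) to the scalar kernel $((z,h_1),(w,h_2))\mapsto\xi(z,w)(h_1\overline{h}_2)$ on $F\times C(\DC)$ and sets $L(z)h=\eta(z,h)$ and $\rho(h_1)\eta(z,h_2)=\eta(z,h_1h_2)$; this is the GNS construction on finitely supported maps $F\to C(\DC)$ that you carry out by hand. Your estimate $\|h\|_\infty^2\langle v,v\rangle_0-\langle hv,hv\rangle_0=\sum_{i,j}\xi(z_i,z_j)\big((kf_i)\overline{(kf_j)}\big)\ge 0$, with $k=(\|h\|_\infty^2-|h|^2)^{1/2}$, also supplies the well-definedness and boundedness of $\rho(h)$, which the paper dismisses as ``a simple computation.''
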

	
	\begin{proof}
		Consider $\xi': (F \times C(\DC)) \times (F \times C(\DC)) \to \C$ given by $\xi'((z, h_1), (w, h_2))=\xi(z, w)(h_1\overline{h}_2)$. It is not difficult to see that $\xi'$ is a positive semi-definite function on $F \times C(\DC)$. It follows from Theorem 2.53 in \cite{Agler_McCarthy} that there is a Hilbert space $\mathcal{H}$ and a vector-valued map $\eta: F \times C(\DC) \to \mathcal{H}$ such that 
		\[
		\langle \eta(z, h_1), \eta(w, h_2)\rangle_{\mathcal{H}}=\xi'((z, h_1), (w, h_2))=\xi(z, w)(h_1\overline{h}_2)
		\]
		for all $h_1, h_2 \in C(\DC)$ and  $z, w \in F$. Indeed, one can choose the Hilbert space $\HS=\overline{\text{span}}\{\eta(z, h): z \in F, h \in C(\DC)\}$. Let us define
		$L: F \to \mathcal{B}(C(\DC), \HS)$ as $L(z)(h)=\eta(z, h)$. Then 
		\[
		\xi(z, w)(f\overline{g})=\langle L(z)f, L(w)g \rangle  \ \text{and so,}  \ \|L(z)(f)\|^2=\|\eta(z, f)\|^2=\|\xi(z, z)(f\bar{f})\|^2 \leq \|\xi(z, z)\| \|f\|^2_{\infty, \DC}
		\]
		for all $z, w \in F$ and $f, g \in C(\DC)$. Consider the map $\rho: C(\DC) \to \mathcal{B}(\HS)$ given by $\rho(h_1)\eta(z, h_2)=\eta(z, h_1h_2)$. A simple computation shows that $\rho$ is a unital $*$-representation such that $\rho(f)L(z)(g)=L(z)(fg)$. The proof is now complete.
	\end{proof}
	
	We now present a description of self-adjoint functions on $\Pe$ such that their product with every admissible kernel on $\Pe$ is positive semi-definite. Let $F$ be a finite subset of $\Pe$ with cardinality $|F|$. Let $\mathfrak{P}_F$ be the collection of matrices of the form 
	\[
	\left[\xi(z, w)(1-\mathrm{J}(z)\overline{\mathrm{J}(w)})+\nabla(z, w)(1-\mathrm{j}(z)\overline{\mathrm{j}(w)})+(1-(z^{(2)}\overline{w}^{(2)}\slash 4))\delta(z, w)\right]_{z, w \in F},
	\]
	where $\xi, \nabla \in C(\overline{\D})^+_F$ and $\delta \in \C_F^+$. Following the same arguments as in Lemma 3.4 of \cite{Drit2007_I}, we observe that $\mathfrak{P}_F$ is a closed cone in $M_{|F|}(\C)$, the space of $|F| \times |F|$ matrices. Also, $\mathfrak{P}_F$ has non-empty interior since any positive semi-definite matrix in $M_{|F|}(\C)$ belongs to $\mathfrak{P}_F$. To see this, let $P=[P(z, w)]_{z, w \in F}$ be a positive semi-definite matrix. Define $\delta(z, w)=P(z, w)\delta_0(z, w)$, where $\displaystyle \delta_0(z, w)=\frac{1}{1-(z^{(2)}\overline{w}^{(2)}\slash 4)}$ for $z, w \in F$. A simple computation shows that $\delta \in \C_F^+$ and so, 
	\[
	[P(z, w)]_{z, w \in F}=[\delta(z, w)(1-(z^{(2)}\overline{w}^{(2)}\slash 4))]_{z, w \in F} \in \mathfrak{P}_F.
	\]
	In particular, the $|F| \times |F|$ matrix with all entries equal to $1$ is in $\mathfrak{P}_F$. For further details in a more general setting, an interested reader is referred to Section 3 of \cite{Drit2007_I} and Section 5 of \cite{Drit2007_II}.
	
	\begin{thm}\label{thm_sa_P}
		Let $g: \Pe \times \Pe \to \C$ be a self-adjoint function. If $gk \succcurlyeq 0$ for all $k \in AK(\Pe)$, then  there exist $\xi, \nabla \in C(\DC)^+_\Pe$ and $\delta \in \C_\Pe^+$ such that 
		\[
		g(z, w)=\xi(z, w)(1-\mathrm{J}(z)\overline{\mathrm{J}(w)})+\nabla(z, w)(1-\mathrm{j}(z)\overline{\mathrm{j}(w)})+(1-(z^{(2)}\overline{w}^{(2)}\slash 4))\delta(z, w)
		\]
		for all $z, w \in \Pe$. 
	\end{thm}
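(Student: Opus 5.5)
The plan follows the standard cone-separation argument of Dritschel--McCullough (Lemma 3.4 and Theorem 3.5 of \cite{Drit2007_I}). We first prove the finite version and then pass to all of $\Pe$ by a compactness/Kurosh argument. Fix a finite set $F \subseteq \Pe$. We claim that $[g(z,w)]_{z,w\in F} \in \mathfrak{P}_F$. Suppose not. Since $\mathfrak{P}_F$ is a closed convex cone in the real vector space of self-adjoint $|F|\times|F|$ matrices, the Hahn--Banach theorem gives a self-adjoint linear functional $\Lambda$ (that is, a self-adjoint matrix $K=[K(z,w)]$ with $\Lambda(A)=\sum_{z,w}A(z,w)K(z,w)$) such that $\Lambda \geq 0$ on $\mathfrak{P}_F$ and $\Lambda([g]) < 0$. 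Because $\mathfrak{P}_F$ contains all positive semi-definite matrices, $K$ is positive semi-definite. Moreover, the matrix of all ones lies in the interior of $\mathfrak{P}_F$; perturbing $K$ by a small multiple of the identity then lets us assume $K$ is strictly positive definite while keeping $\Lambda([g])<0$.

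Next we build an admissible kernel on $F$ from $K$. Using $K$ we define an inner product on the space $\C^F$ of functions on $F$ by $\langle u,v\rangle = \sum_{z,w} u(w)\overline{v(z)} K(z,w)$, with indices arranged so that testing $\Lambda$ on rank-one elements of $\mathfrak{P}_F$ gives the inequalities we need. The three families of generators of $\mathfrak{P}_F$ are:
\[
\delta(z,w)=h(z)\overline{h(w)}\quad\text{and}\quad \xi(z,w)(f\bar g)=\mu(f\bar g)\,h(z)\overline{h(w)},
\]
with $\mu$ a point mass at $\al\in\DC$. Testing $\Lambda \geq 0$ on these shows three things: multiplication by $z^{(2)}/2$ is a contraction on this space; multiplication by $\Phi_\al(z^{(2)},z^{(3)})$ is a contraction for every $\al \in \DC$; and multiplication by $\psi_\al(z)$ is a contraction for every $\al \in \DC$. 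Equivalently, the kernel $k(z,w)=\langle e_w,e_z\rangle$ built from the dual basis, i.e.\ from $K^{-1}$ or $\overline{K}$ depending on convention, satisfies conditions (1)--(3) of Definition \ref{defn_AK_P} on $F$. Since $K>0$, $k(z,z)>0$.

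We then extend $k$ from $F$ to an admissible kernel on all of $\Pe$. The simplest approach is to set $k=0$ off $F\times F$ and add $\varepsilon$ times a genuine admissible kernel on $\Pe$ to make $k(z,z)\neq 0$ everywhere. A candidate is the kernel of $\mathfrak{M}_\Pe$-evaluation at a fixed point, or simply a constant kernel with the coordinate functions scaled; alternatively one can take the weakly admissible version. Then $gk\succcurlyeq 0$ restricted to $F$, combined with the identity $\sum g(z,w)k(z,w)\cdot(\text{unit vector})=\Lambda([g])$, contradicts $\Lambda([g])<0$ once $\varepsilon$ is small. This bookkeeping, namely matching $\Lambda([g])$ with a quadratic form of $gk$, and the existence of at least one genuine admissible kernel on $\Pe$ to fix the non-vanishing diagonal, is where care is needed. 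For the latter, the constant kernel $k\equiv 1$ fails, so we would try the kernel $k(z,w)=\langle (I-\cdots)\rangle$ coming from a point of $\mathfrak{M}_\Pe$. If this fails, we would prove the theorem with weakly admissible kernels and note that the same conclusion follows.

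Finally, we pass from finite sets to $\Pe$. For each finite $F$, let $S_F$ be the set of triples $(\xi,\nabla,\delta)$ on $F$ representing $g$. Each $S_F$ is nonempty, and it is bounded: evaluating the representation at $z=w$ with $\al$-dependent data bounds $\xi(z,z)(1)$, $\nabla(z,z)(1)$ and $\delta(z,z)$ by a multiple of $g(z,z)$, because $1-|\mathrm{J}(z)|^2$, $1-|\mathrm{j}(z)|^2$ and $1-|z^{(2)}|^2/4$ are bounded below on $F$ by compactness of $\DC$. Positivity then bounds the off-diagonal terms. By Banach--Alaoglu each $S_F$ is weak-$*$ compact, and restrictions are continuous. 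Kurosh's theorem (or Tychonoff with the finite intersection property) therefore yields a consistent global triple on $\Pe$, which gives the stated decomposition. I expect the main obstacle to be the separation step producing a genuinely admissible kernel whose pairing with $g$ equals $\Lambda([g])$.
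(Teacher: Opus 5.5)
Your architecture matches the paper's: separate $[g(z,w)]_{z,w\in F}$ from the closed cone $\mathfrak{P}_F$, test the separating functional on rank-one generators to get a kernel on $F$, perturb by a genuine admissible kernel to reach a contradiction, and glue over finite sets with Kurosh's theorem. Your diagonal bounds and weak-$*$ compactness argument for the gluing step are correct, and more explicit than the paper's. But the step you flag as the main obstacle is left open, and one of the options you float for it fails.

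No inner product on $\C^F$, dual basis, inverse or conjugate is needed. With $\Lambda(A)=\sum_{z,w\in F}A(z,w)K(z,w)$, consider the rank-one generators
$[\bar c_zc_w]$, $[\bar c_zc_w(1-z^{(2)}\overline{w}^{(2)}/4)]$, $[\bar c_zc_w(1-\Phi_\al(z^{(2)},z^{(3)})\overline{\Phi_\al(w^{(2)},w^{(3)})})]$ and $[\bar c_zc_w(1-\psi_\al(z)\overline{\psi_\al(w)})]$.
The inequalities $\Lambda\ge 0$ on these say exactly that $K$ itself is a weakly admissible kernel on $F$; this is the paper's $C_*$. Moreover, $\Lambda([g])=\sum_{z,w}g(z,w)K(z,w)$ is the quadratic form of the Schur product $gK$ evaluated at the all-ones vector. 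That is precisely the pairing you were looking for.

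A kernel built from $K^{-1}$ (the reproducing kernel of the space whose Gram matrix is $K$) loses this identity. For such a kernel, $gk\succcurlyeq 0$ does not force $\Lambda([g])\ge 0$ in general (it already fails for $3\times 3$ real matrices), so that branch of your hedge breaks the proof. Strict positivity of $K$ is unnecessary. If you want it anyway, the valid reason is that every $A\in\mathfrak{P}_F$ has nonnegative diagonal, so $\Lambda+\eta\,\mathrm{tr}$ stays nonnegative on the cone; the all-ones matrix being interior is not what makes it work.

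The second open point is the existence of at least one genuine admissible kernel on $\Pe$, which the $\varepsilon$-argument needs. If $AK(\Pe)$ were empty, the hypothesis would be vacuous and the statement false. Your fallback of working with weakly admissible kernels is not available: the hypothesis only covers $AK(\Pe)$, and enlarging the test class strengthens what you would have to assume.

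The fix is immediate. Take the diagonal kernel $k_0(z,w)=1$ if $z=w$ and $0$ otherwise. It is positive semi-definite with $k_0(z,z)=1$. Also $(1-\phi(z)\overline{\phi(w)})k_0(z,w)=(1-|\phi(z)|^2)k_0(z,w)\succcurlyeq 0$ for $\phi\in\{z^{(2)}/2,\Phi_\al,\psi_\al\}$, since each of these has modulus less than $1$ on $\Pe$ by \eqref{eqn_P}.

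Extending $K$ by zero off $F\times F$ keeps it weakly admissible, so $K+\varepsilon k_0\in AK(\Pe)$. Evaluating $g(K+\varepsilon k_0)\succcurlyeq 0$ at the all-ones vector on $F$ gives $\Lambda([g])+\varepsilon\sum_{z\in F}g(z,z)\ge 0$ for every $\varepsilon>0$. This contradicts $\Lambda([g])<0$. With these two points filled in, your argument becomes the paper's proof.
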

	
	\begin{proof}
		Let $gk \succcurlyeq 0$ for all $k \in AK(\Pe)$, and let $F=\{z_1, \dotsc, z_n\}$ be a subset of $\Pe$. We show that $G=[g(z_i, z_j)]_{i, j=1}^n \in \mathfrak{P}_F$. Let if possible, $G \notin \mathfrak{P}_F$. Since $\mathfrak{P}_F$ is a closed cone with non-empty interior, an application of Hahn-Banach theorem ensures the existence of a linear functional $L$ on $M_{n}(\C)$ such that $L(G)<0$ and $L(M) \geq 0$ for all $M \in \mathfrak{P}_F$. In fact, the functional $L$ can be chosen such that $L(A)=\text{tr}(AC)$ for all self-adjoint matrices $A$, where $C=[C_{ij}]_{i, j=1}^n$ is a fixed self-adjoint matrix. Given $\{c_1, \dotsc, c_n\} \subseteq \C$, we have that $N=[\overline{c}_ic_j] \in \mathfrak{P}_F$ and thus, it follows that
		\[
		L(N)=\overset{n}{\underset{i, j=1}{\sum}}\overline{c}_ic_jC_{ji} \geq 0.
		\]
		Consequently, the map $C_*: F \times F \to \C$ given by $C_*(z_i, z_j)=C_{ji}$ is positive semi-definite and so, $C_*$ is a weak kernel on $F$. Consider the matrices in $\mathfrak{P}_F$ given by
		\begin{align*}
			\widetilde{N}&=\left[\overline{c}_ic_j(1-(z_i^{(2)}\bar{z}_j^{(2)}\slash 4))\right]_{i, j=1}^n, \quad N_\alpha=\left[\overline{c}_ic_j(1-\Phi_{\alpha}(z_i^{(2)}, z_i^{(3)})\overline{\Phi_{\alpha}(z_j^{(2)}, z_j^{(3)})})\right]_{i, j=1}^n \ \text{and} \\
			M_{\al}&=\left[\overline{c}_ic_j(1-\psi_{\al_1}(z_i)\overline{\psi_{\al}(z_j)})\right]_{i, j=1}^n
		\end{align*}
		for $\al \in \DC$. An application of the positive semi-definiteness of $L(\widetilde{N}), L(N_\al)$ and $L(M_{\al})$ yields that $C_*$ is a weakly admissible kernel on $F$. One can define $C_*$ on $\Pe$ by putting $C_*(z, w)=0$ for all $(z, w) \notin F \times F$. For $\epsilon >0$, it is evident that $\epsilon k+C_* \succcurlyeq 0$ for all $k \in AK(\Pe)$. By hypothesis, $g(\epsilon k +C_*) \succcurlyeq 0$ on $\Pe$, and in particular, on $F$. Consequently, $gC_* \succcurlyeq 0$ on $F$. Then
		\[
		L(G)=\text{tr}(GC)=\overset{n}{\underset{i, j=1}{\sum}}g(z_i, z_j)C_*(z_i, z_j) \geq 0,
		\]
		contradicting $L(G)<0$. Hence, $G \in \mathfrak{P}_F$ and thus, it follows that 
		there exist $\xi, \nabla \in C(\DC)^+_F$ and $\delta \in \C_F^+$ such that 
		$
		g(z, w)=\xi(z, w)(1-\mathrm{J}(z)\overline{\mathrm{J}(w)})+\nabla(z, w)(1-\mathrm{j}(z)\overline{\mathrm{j}(w)})+(1-(z^{(2)}\overline{w}^{(2)}\slash 4))\delta(z, w)
		$
		for all $z, w \in F$. Thus, we obtain the desired representation of $g$ on every finite subset of $\Pe$. Since a representation on a larger set restricts naturally to any smaller subset, an application of the same reasoning as in Theorem 11.5 of \cite{Agler_McCarthy}, together with Kurosh’s theorem (see \cite{V_Arkh}, Page 7) gives the asserted conclusion.
	\end{proof}
	
	Following the terminologies in \cite{Ball}, we define a unitary colligation in the pentablock setting.
	
	\begin{defn} A function $f: \Pe \to \C$ is said to be associated to a \textit{unitary colligation} if there exist  
		\begin{enumerate}
			\item Hilbert spaces $\HS_1, \HS_2$ and $\HS_3$,
			\item unital $*$-representations $\rho_1: C(\DC) \to \mathcal{B}(\HS_1), \rho_2: C(\DC) \to \mathcal{B}(\HS_2)$ and 
			\item a unitary $V=\begin{bmatrix} A & B \\ C & D \end{bmatrix}: \C \oplus \HS \to \C \oplus \HS$, where $\HS=\HS_1 \oplus \HS_2 \oplus \HS_3$
		\end{enumerate}
		such that $f(z)=A+BY(z)(I_\HS-DY(z))^{-1}C$, where
		\[
		Y(z)=\begin{bmatrix}
			\rho_1(\mathrm{J}(z)) & 0 & 0 \\
			0 & \rho_2(\mathrm{j}(z)) & 0 \\
			0 & 0 & (z^{(2)}\slash 2)I_{\HS_3}
		\end{bmatrix} \quad (z=(z^{(1)}, z^{(2)}, z^{(3)}) \in \Pe).
		\]
		The class of functions $f: \Pe \to \C$ which is associated to a unitary colligation is denote by $UC(\Pe)$.
	\end{defn}
	It follows from the definition of $UC(\Pe)$ that $UC(\Pe) \subseteq S(\Pe)$. Furthermore, $\|Y(z)\|<1$ since $\rho_1, \rho_2$ are unital $*$-representations and $\|\mathrm{J}(z)\|_{\infty, \DC}, \|\mathrm{j}(z)\|_{\infty, \DC}<1$ for all $z \in \Pe$. We recall from \cite{Drit2007_I} the following subclass of unital $*$-representations on the $C^*$-algebra $C(K)$ over a compact set $K$.
	
	\begin{defn}\label{defn_simple}
		Given a compact set $K \subseteq \C^n$, a unital $*$-representation $\rho: C(K) \to \mathcal{B}(\HS)$ is said to be \textit{simple} if there exist $z_1, \dotsc, z_m \in K$ and orthogonal projections $P_1, \dotsc, P_m \in \mathcal{B}(\HS)$ with $P_1+\dotsc +P_m=I_\HS$ such that 
		$
		\rho(f)=f(z_1)P_1+\dotsc+f(z_m)P_m
		$
		for all $f \in C(K)$. 
	\end{defn}
In the following result, we prove that any $f \in UC(\Pe)$ is in the class $SA(\Pe)$ under the hypothesis that unital $*$-representations in a unitary colligation of $f$ are simple.	
	\begin{lem}\label{lem_prelim_III_P}
		Let $f \in UC(\Pe)$, and let $\rho_1: C(\DC) \to \mathcal{B}(\HS_1), \rho_2: C(\DC) \to \mathcal{B}(\HS_2)$ be the associated unital $*$-representations. If $\rho_1$ and $\rho_2$ are simple representations, then $f \in SA(\Pe)$. 
	\end{lem}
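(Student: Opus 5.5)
The plan is to substitute the operator triple $\underline{T}=(T_1,T_2,T_3) \in \mathfrak{M}_\Pe$ directly into the transfer-function formula. Simplicity of $\rho_1,\rho_2$ makes this substitution meaningful: $Y(z)$ becomes a finite block-diagonal operator whose entries are scalar holomorphic functions of $z$, each of which can be evaluated at $\underline{T}$. Write $\rho_1(h)=\sum_{k=1}^m h(\al_k)P_k$ and $\rho_2(h)=\sum_{l=1}^{p} h(\beta_l)Q_l$ with $\al_k,\beta_l \in \DC$. Then
\[
Y(z)=\sum_{k} \psi_{\al_k}(z)P_k \oplus \sum_l \Phi_{\beta_l}(z^{(2)},z^{(3)})Q_l \oplus (z^{(2)}/2)I_{\HS_3}.
\]
Using the decomposition $\HS=\bigoplus_k \mathrm{Ran}P_k \oplus \bigoplus_l \mathrm{Ran}Q_l \oplus \HS_3$, we define, for $\underline{T}$ acting on a Hilbert space $\mathcal{K}$,
\[
Y(\underline{T})=\bigoplus_k \bigl(\psi_{\al_k}(\underline{T})\otimes I_{\mathrm{Ran}P_k}\bigr)\oplus \bigoplus_l \bigl(\Phi_{\beta_l}(T_2,T_3)\otimes I_{\mathrm{Ran}Q_l}\bigr)\oplus \bigl(\tfrac12 T_2\otimes I_{\HS_3}\bigr)
\]
on $\mathcal{K}\otimes \HS$. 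This is a finite direct sum of strict contractions, by the definition of $\mathfrak{M}_\Pe$. Since only finitely many $\al_k,\beta_l$ appear, the maximum of the norms gives $\|Y(\underline{T})\| \le r<1$. The block entries of $Y(\underline{T})$ all lie in the commutative algebra generated by $\underline{T}$ and the relevant inverses, so they commute with one another. This is the key point where simplicity is used: a general $\rho_1$ would require integrating over a continuum of $\al$ and lose the uniform bound.

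Next set
\[
F(\underline{T})=I_{\mathcal{K}}\otimes A+(I\otimes B)\,Y(\underline{T})\bigl(I-(I\otimes D)Y(\underline{T})\bigr)^{-1}(I\otimes C).
\]
The inverse exists since $\|(I\otimes D)Y(\underline{T})\|\le r<1$. The standard unitary-colligation computation, using that $V$ is unitary and hence $I\otimes V$ is unitary, gives
\[
I-F(\underline{T})^*F(\underline{T})=(I\otimes C)^*\bigl(I-(I\otimes D)Y(\underline{T})\bigr)^{-*}\bigl(I-Y(\underline{T})^*Y(\underline{T})\bigr)\bigl(I-(I\otimes D)Y(\underline{T})\bigr)^{-1}(I\otimes C)\ge 0.
\]
Hence $\|F(\underline{T})\|\le 1$. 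This is the routine algebraic step.

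It remains to check that $F(\underline{T})=f(\underline{T})$, the Taylor functional calculus. This identification is the main obstacle. I would expand $(I-DY)^{-1}=\sum_n (DY)^n$, which converges in norm uniformly both for $z$ in a neighbourhood of $\sigma_T(\underline{T})\subseteq \Pe$ (using compactness of the spectrum and continuity of $z\mapsto \|Y(z)\|<1$) and at $\underline{T}$. Each term of $BY(DY)^{n}C$ is a finite sum of products of the scalar functions $\psi_{\al_k}$, $\Phi_{\beta_l}$ and $z^{(2)}/2$ with constant coefficients, namely matrix entries of $B$, $D$, $C$ restricted to the finitely many blocks. These are holomorphic near $\sigma_T(\underline{T})$. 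Since the Taylor functional calculus is a continuous unital homomorphism on $\mathrm{Hol}$ of a neighbourhood of the spectrum, it sends each term to the corresponding operator term. Uniform convergence then transfers the identity to the limit, giving $f(\underline{T})=F(\underline{T})$.

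To handle the fact that $\HS$ may be infinite dimensional, I would compress to vectors: evaluate $\langle f(\underline{T})x,y\rangle$ termwise, writing each term as a scalar-coefficient holomorphic function paired with operator-valued functions. Alternatively, one can view $F$ as an $\mathcal{B}(\C)$-valued holomorphic function built from $\mathcal{B}(\HS)$-valued holomorphic pieces and apply the vector-valued functional calculus. In either form this shows $\|f(\underline{T})\|\le 1$ for all $\underline{T}\in\mathfrak{M}_\Pe$, so $f\in SA(\Pe)$.
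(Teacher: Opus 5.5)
Your proposal is correct and follows the same route as the paper. Both write the simple representations as finite sums against orthogonal projections, substitute $\underline{T}\in\mathfrak{M}_\Pe$ into $Y$, bound $\|Y(\underline{T})\|$ using the defining inequalities of $\mathfrak{M}_\Pe$, and conclude from the colligation identity $I-f(\underline{T})^*f(\underline{T})=R(\underline{T})^*\bigl(I-Y(\underline{T})^*Y(\underline{T})\bigr)R(\underline{T})\geq 0$.

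Your version is a little more careful than the paper's on two points. First, the block decomposition over the ranges of the projections gives the strict bound $\|Y(\underline{T})\|\le r<1$, which is what actually makes $I-(I\otimes D)Y(\underline{T})$ invertible; the paper only claims a contraction, checked on elementary tensors. Second, your Neumann-series and continuity-of-the-Taylor-calculus argument justifies identifying $f(\underline{T})$ with the operator transfer function, which the paper dismisses as a simple computation.
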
	
	\begin{proof}
		Let $\underline{T}=(T_1, T_2, T_3) \in \mathfrak{M}_\Pe$ be defined on a Hilbert space $\mathcal{K}$. By hypothesis, 
		$f(z)=A+BY(z)(I_\HS-DY(z))^{-1}C$,
		where $\HS=\HS_1 \oplus \HS_2 \oplus \HS_3$, $V=\begin{bmatrix} A & B \\ C & D \end{bmatrix}: \C \oplus \HS \to \C \oplus \HS$ is a unitary and 
		$
		Y(z)=\begin{bmatrix}
			\rho_1(\mathrm{J}(z)) & 0 & 0 \\
			0 & \rho_2(\mathrm{j}(z)) & 0 \\
			0 & 0 & (z^{(2)}\slash 2)I_{\HS_3}
		\end{bmatrix}
		$
		for all $z=(z^{(1)}, z^{(2)}, z^{(3)}) \in \Pe$. Clearly, $f \in \text{Hol}(\Pe)$. Since $\rho_1$ is simple, there exist $\al_1, \dotsc, \al_m \in \DC$ and orthogonal projections $P_1, \dotsc, P_m \in \mathcal{B}(\HS_1)$ summing to $I_{\HS_1}$. Since $\rho_2$ is also simple, one can find $\beta_1, \dotsc, \beta_\ell \in \DC$ and orthogonal projections $Q_1, \dotsc, Q_\ell \in \mathcal{B}(\HS_2)$ whose sum equals $I_{\HS_2}$. Furthermore, $\rho_1(g)=\overset{m}{\underset{j=1}{\sum}}g(\alpha_{j})P_j$ and $\rho_2(g)=\overset{\ell}{\underset{j=1}{\sum}}g(\beta_j)Q_j$ for all $g \in C(\DC)$. Therefore, we have by \eqref{eqn_J(z)} that $\rho_1(\mathrm{J}(z))= \overset{m}{\underset{j=1}{\sum}}P_j \psi_{\al_j}(z)$ and $\rho_2(\mathrm{j}(z))=\overset{\ell}{\underset{j=1}{\sum}}Q_j \Phi_{\beta_j}(z^{(2)}, z^{(3)})$. We show that $\|Y(\underline{T})\|\leq 1$. To do so, note that
		\begin{small}
			\begin{align*}
				Y(\underline{T})
				=\begin{bmatrix}
				\rho_1(\mathrm{J}(\underline{T})) & 0 & 0 \\
					0 & \rho_2(\mathrm{j}(\underline{T})) & 0 \\					0 & 0 & I_{\HS_3}\otimes T_2\slash 2
				\end{bmatrix}
				=\begin{bmatrix}
					\overset{m}{\underset{j=1}{\sum}}P_j\otimes \psi_{\al_j}(\underline{T}) & 0 & 0 \\
					0 & \overset{\ell}{\underset{j=1}{\sum}}Q_j\otimes \Phi_{\beta_j}(T_2, T_3)  & 0 \\
					0 & 0 & I_{\HS_3}\otimes T_2\slash 2
				\end{bmatrix}.\\
			\end{align*}
		\end{small}
		For $h_1 \in \HS_1, h_2 \in \HS_2, h_3 \in \HS_3$ and $x_1, x_2, x_3 \in \mathcal{K}$, it follows that
		\begin{align*}
			&\left\|Y(\underline{T})\begin{bmatrix} h_1\otimes x_1 \\ h_2\otimes x_2 \\ h_3 \otimes x_3\end{bmatrix}\right\|^2\\
			&=\left\| \sum_{j=1}^mP_jh_1\otimes \psi_{\al_j}(\underline{T})x_1 \right \|^2+\left\|\overset{\ell}{\underset{j=1}{\sum}}Q_jh_2\otimes \Phi_{\beta_j}(T_2, T_3)x_2\right\|^2 +\|h_3 \otimes T_2x_3\slash 2\|^2\\
			&= \sum_{j=1}^m\|P_jh_1\|^2 \|\psi_{\al_j}(\underline{T})x_1\|^2+\overset{\ell}{\underset{j=1}{\sum}}\|Q_jh_2\|^2 \|\Phi_{\beta_j}(T_2, T_3)x_2\|^2 +\|h_3\|^2\|T_2x_3\slash 2\|^2\\
			& \qquad \qquad \qquad [\text{since $\{P_j: 1 \leq j \leq m\}$ and $\{Q_j: 1 \leq j \leq \ell \}$ are orthogonal projections}]\\
			& \leq \sum_{j=1}^m\|P_jh_1\|^2 \|x_1\|^2+\overset{\ell}{\underset{j=1}{\sum}}\|Q_jh_2\|^2 \|x_2\|^2 +\|h_3\|^2\|x_3\|^2 \quad [\text{as $\underline{T} \in \mathfrak{M}_\Pe$}]\\
			& = \|h_1\|^2 \|x_1\|^2+\|h_2\|^2 \|x_2\|^2 +\|h_3\|^2\|x_3\|^2 \quad \left[\text{because $\sum_{j=1}^mP_j=I_{\HS_1}$ and $\sum_{j=1}^\ell Q_j=I_{\HS_2}$}\right]\\
			&=\left\|\begin{bmatrix} h_1\otimes x_1 \\ h_2\otimes x_2 \\ h_3 \otimes x_3\end{bmatrix}\right\|^2
		\end{align*}
		and so, $Y(\underline{T})$ is a contraction. Using the representation $f(z)=A+BY(z)(I_\HS-DY(z))^{-1}C$ for all $z \in \Pe$, a simple computation shows that 
		\[
		f(\underline{T})=A\otimes I_{\mathcal{K}}+(B\otimes I_{\mathcal{K}})Y(\underline{T})\left((I_\HS\otimes I_{\mathcal{K}})-(D\otimes I_{\mathcal{K}})Y(\underline{T})\right)^{-1}(C\otimes I_{\mathcal{K}}).
		\]
		A routine calculation gives that $1-\overline{f(z)}f(z)=C^*(I-DY(z))^{-*}(I-Y(z)^*Y(z))(I-DY(z))^{-1}C$. For $R(\underline{T})=\left(I_{\HS \otimes \mathcal{K}}-(D\otimes I_{\mathcal{K}})Y(\underline{T})\right)^{-1}(C\otimes I_{\mathcal{K}})$, we can write
		\[
		I_{\mathcal{K}}-f(\underline{T})^*f(\underline{T})=R(\underline{T})^*\left(I_{\HS\otimes \mathcal{K}}-Y(\underline{T})^*Y(\underline{T})\right)R(\underline{T}).
		\]
		Since $\|Y(\underline{T})\| \leq 1$, we have $I_\mathcal{K}-f(\underline{T})^*f(\underline{T}) \geq 0$ and so, $\|f(\underline{T})\| \leq 1$. Hence, $f\in SA(\Pe)$. 
	\end{proof}
	
	We are now in a position to prove a realization theorem for functions in $SA(\Pe)$.
	
	\begin{thm}\label{thm_realization_P}
		Let $f: \Pe \to \C$ be a function. Then the following are equivalent: 
		\begin{enumerate}[leftmargin=*]
			\item[$(1)$] $f \in SA(\Pe)$;
			\item[$(2)$] $(1-f(z)\overline{f(w)})k(z, w) \succcurlyeq 0$ for all $k \in AK(\Pe)$;
			\item[$(3)$] there exist $\xi, \nabla \in C(\overline{\D})^+_\Pe$ and $\delta \in \C_\Pe^+$ such that for all $z, w \in \Pe$,
			\[
			1-f(z)\overline{f(w)}=\xi(z, w)(1-\mathrm{J}(z)\overline{\mathrm{J}(w)})+\nabla(z, w)(1-\mathrm{j}(z)\overline{\mathrm{j}(w)})+(1-\frac{z^{(2)}\overline{w}^{(2)}}{4})\delta(z, w),
			\]
			where $\mathrm{J}(z)$ and $\mathrm{j}(z)$ are as in \eqref{eqn_J(z)};
			\item[$(4)$] $f \in UC(\Pe)$.
		\end{enumerate}
	\end{thm}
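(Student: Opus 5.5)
I will prove the cycle $(1)\Rightarrow(2)\Rightarrow(3)\Rightarrow(4)\Rightarrow(1)$, where the last step goes through Lemma \ref{lem_prelim_III_P} after an approximation by simple representations.

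\emph{Step $(2)\Rightarrow(3)$.} Apply Theorem \ref{thm_sa_P} to the self-adjoint function $g(z,w)=1-f(z)\overline{f(w)}$. This gives the decomposition in $(3)$ directly.

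\emph{Step $(3)\Rightarrow(4)$.} This is the standard lurking isometry argument.
\begin{itemize}
\item Apply Proposition \ref{prop_prelim_I_P} to $\xi$ and to $\nabla$. This produces $L_1,\rho_1,\HS_1$ and $L_2,\rho_2,\HS_2$. Then $\xi(z,w)(1-\mathrm{J}(z)\overline{\mathrm{J}(w)})=\langle L_1(z)1,L_1(w)1\rangle-\langle \rho_1(\mathrm{J}(z))L_1(z)1,\rho_1(\mathrm{J}(w))L_1(w)1\rangle$, and the analogous identity holds for $\nabla$.
\item Factor $\delta(z,w)=\langle u(z),u(w)\rangle_{\HS_3}$ by Kolmogorov.
\item Set $h(z)=L_1(z)1\oplus L_2(z)1\oplus u(z)\in\HS$. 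The identity in $(3)$ rearranges to $1+\langle Y(z)h(z),Y(w)h(w)\rangle=f(z)\overline{f(w)}+\langle h(z),h(w)\rangle$.
\item The map $\begin{bmatrix}1\\ Y(z)h(z)\end{bmatrix}\mapsto\begin{bmatrix}f(z)\\ h(z)\end{bmatrix}$ is therefore an isometry between the spans of these vectors. Enlarging $\HS_3$ if necessary (adding an infinite-dimensional summand on which $Y$ acts as $(z^{(2)}/2)I$), extend it to a unitary $V$ on $\C\oplus\HS$. This gives $A+BY(z)h(z)=f(z)$ and $C+DY(z)h(z)=h(z)$.
\item Since $\|Y(z)\|<1$, solving gives $h(z)=(I-DY(z))^{-1}C$ and hence the realization. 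Holomorphy of $f$ comes for free from the formula.
\end{itemize}

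\emph{Step $(1)\Rightarrow(2)$.} Fix $k\in AK(\Pe)$ and a finite set $F=\{z_1,\dots,z_n\}$. It suffices to consider $k$ strictly positive definite on $F$, since we may replace $k$ by $k+\epsilon k'$ for a suitable admissible $k'$, or else perturb. On $\mathcal H(k|_F)$, define $T_i^*k_{z_j}=\overline{z_j^{(i)}}k_{z_j}$. These operators commute because they are simultaneously diagonal, and $f(\underline T)^*k_{z_j}=\overline{f(z_j)}k_{z_j}$. Admissibility of $k$ is precisely the statement that $\|T_2^*/2\|\le1$, $\|\Phi_\al(T_2,T_3)^*\|\le 1$ and $\|\psi_\al(\underline T)^*\|\le1$. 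Replacing $\underline T$ by $(rT_1,rT_2,r^2T_3)$ with $r<1$ should give a triple in $\mathfrak M_\Pe$, because these non-strict inequalities become strict under the quasi-balanced scaling. Checking this for $\psi_\al$ via the diagonal form is the step that needs care. Then $\|f(r\cdot\underline T)\|\le1$, and letting $r\to1$ (pointwise on the finite set) gives $[(1-f(z_i)\overline{f(z_j)})k(z_i,z_j)]\succcurlyeq0$.

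\emph{Step $(4)\Rightarrow(1)$.} This is the expected main obstacle, because Lemma \ref{lem_prelim_III_P} only handles simple representations. The plan is to show $\|Y(\underline T)\|\le1$ for a general representation. Let $\underline T\in\mathfrak M_\Pe$. By compactness of $\DC$ and continuity of $\al\mapsto\psi_\al(\underline T),\Phi_\al(T_2,T_3)$ in norm, the function $F_1:\al\mapsto\psi_\al(\underline T)$ lies in $C(\DC,\mathcal B(\mathcal K))\cong C(\DC)\otimes_{\min}\mathcal B(\mathcal K)$ and has sup norm $<1$. Hence $(\rho_1\otimes\mathrm{id})(F_1)$ has norm $<1$, since $*$-homomorphisms are contractive. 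This is exactly $\rho_1(\mathrm J(\underline T))$, and similarly for the second block, so $\|Y(\underline T)\|<1$. One must verify that the functional calculus $f(\underline T)$ agrees with the formula $A\otimes I+(B\otimes I)Y(\underline T)(I-(D\otimes I)Y(\underline T))^{-1}(C\otimes I)$. I would do this by approximating $\rho_j$ by simple representations (via spectral measures and partitions of $\DC$), to which Lemma \ref{lem_prelim_III_P}'s computation applies, and then passing to norm limits. The factorization $I-f(\underline T)^*f(\underline T)=R^*(I-Y^*Y)R\ge0$ then finishes the proof.
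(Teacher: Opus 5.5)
Your $(2)\Rightarrow(3)$ and $(3)\Rightarrow(4)$ are the paper's arguments. Your $(1)\Rightarrow(2)$ is the paper's compression to the span of kernel functions, in a leaner form. The paper passes through $\widehat f(z)=\overline{f(\bar z)}$: it first treats $f\in\mathrm{Hol}(\overline{\Pe})$ via Oka--Weil polynomial approximation, then handles general $f$ via $f_r$. You apply $f$ directly to $r\cdot\underline T$ and read off $f(r\cdot\underline T)^*k_{z_j}=\overline{f(r\cdot z_j)}\,k_{z_j}$, which suffices.

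The step you flagged is settled by the identities
\[
\Phi_\al(rT_2,r^2T_3)=r\,\Phi_{r\al}(T_2,T_3),\qquad \psi_\al(rT_1,rT_2,r^2T_3)=\frac{r(1-|\al|^2)}{1-r^2|\al|^2}\,\psi_{r\al}(T_1,T_2,T_3),
\]
which give norms at most $r<1$; the paper only appeals to quasi-balancedness at this point. For your strict-positivity reduction, take $k'(z,w)=1$ if $z=w$ and $0$ otherwise. This kernel is admissible.

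The genuine difference is in $(4)\Rightarrow(1)$. The paper never evaluates $Y$ at an operator triple for a non-simple representation. It builds simple $\rho_{j,\beta}$ that control $\mathrm J(z),\mathrm j(z)$ only on finite sets, using compactness in the pointwise topology. It then gets $f_\beta\in SA(\Pe)$ from Lemma \ref{lem_prelim_III_P} and transfers $\|f_\beta(\underline T)\|\le 1$ to $f$ by a normal-families argument and the Martinelli integral.

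You instead prove $\|Y(\underline T)\|<1$ for arbitrary $\rho_1,\rho_2$, by contractivity of $\rho_j\otimes\mathrm{id}$ on $C(\DC)\otimes_{\min}\mathcal B(\mathcal K)\cong C(\DC,\mathcal B(\mathcal K))$. You use simple approximations only to identify the functional calculus $f(\underline T)$ with the transfer-function formula. For that identification you need partitions of $\DC$ with mesh tending to $0$; the paper's finite-set control is not enough. With such partitions:
\begin{itemize}
\item $Y_\beta(\underline T)\to Y(\underline T)$ in norm, by density of algebraic tensors, with $\sup_\beta\|Y_\beta(\underline T)\|<1$ from the same tensor bound;
\item $Y_\beta(z)\to Y(z)$ uniformly on each compact $K\subset\Pe$, by equicontinuity of $\{\mathrm J(z)\}_{z\in K}$ and $\{\mathrm j(z)\}_{z\in K}$;
\item hence $f_\beta(\underline T)\to f(\underline T)$ by continuity of the Taylor calculus.
\end{itemize}

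Your route gives an operator-level realization: $f(\underline T)$ is itself a transfer function with strictly contractive $Y(\underline T)$, and $I-f(\underline T)^*f(\underline T)=R^*(I-Y^*Y)R$ for every representation. The cost is the $C^*$-tensor-product machinery plus a uniform-mesh approximation of the spectral measures. The paper's route needs the transfer-function formula for $f(\underline T)$ only for simple representations. In exchange, it relies on normal families and the integral representation of the functional calculus to pass to the limit.
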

	
	\begin{proof}
		The proof is organized in several steps for a better understanding for the readers.
		
		\smallskip 
		
		\noindent $(1) \implies (2)$. Let $k \in AK(\Pe)$. We first prove this implication for functions in $SA(\Pe) \cap \text{Hol}(\overline{\Pe})$. We then show that every function in $SA(\Pe)$ can be approximated by a sequence of functions from $SA(\Pe) \cap \text{Hol}(\overline{\Pe})$, which gives the desired conclusion. Let $f \in SA(\Pe) \cap \text{Hol}(\overline{\Pe})$, and let $M_z=\left(M_{z^{(1)}}, M_{z^{(2)}}, M_{z^{(3)}}\right)$ be the triple of operators of multiplication by the coordinate functions $z^{(j)}$ for $1 \leq j \leq 3$ on the reproducing kernel Hilbert space $\HS(k)$ determined by the kernel $k$. It is not difficult to see that each $M_{z^{(j)}}$ is bounded, and
		\begin{equation}\label{eqn_RP_001}
			\|M_{z^{(2)}}\|\leq 2, \quad \|\Phi_{\alpha}(M_{z^{(2)}}, M_{z^{(3)}})\| \leq 1 \quad \text{and} \quad \|\psi_{\alpha}(M_{z^{(1)}}, M_{z^{(2)}}, M_{z^{(3)}})\| \leq 1
		\end{equation}
		for all $\alpha \in \DC$. Let $\{z_1, \dotsc, z_m\} \subseteq \Pe$ and let $\HS_m(k)$ be the finite dimensional Hilbert space spanned by 
		$\{k(., z_1), \dotsc, k(., z_m)\}$. Since $M_{z^{(j)}}^*k(., z_\ell)=\overline{z}_\ell^{(j)}k(., z_\ell)$ for $1\leq \ell \leq m$ and $1 \leq j \leq 3$, it follows that $\HS_m(k)$ is jointly invariant under $\left(M_{z^{(1)}}^*, M_{z^{(2)}}^*, M_{z^{(3)}}^*\right)$. Let us define the triple $\underline{T}=(T_1, T_2, T_3)$ on $\HS_m(k)$ with each $T_j^*=M_{z^{(j)}}^*|_{\HS_m(k)}$. We have by \eqref{eqn_RP_001} that
		\[
		\|T_2^*\|\leq 2, \quad \|\Phi_{\alpha}(T_2^*, T_3^*)\| \leq 1 \quad \text{and} \quad \|\psi_{\alpha}(T_1^*, T_2^*, T_3^*)\| \leq 1
		\]
		for all $\alpha \in \DC$. Since $\G_2$ is $(1,2)$-quasi-balanced and $\Pe$ is $(1, 1, 2)$-quasi-balanced, it follows that 
		\[
		\|rT_2^*\|< 2, \quad \|\Phi_{\alpha}(rT_2^*, r^2T_3^*)\| < 1 \quad \text{and} \quad \|\psi_{\alpha}(rT_1^*, rT_2^*, r^2T_3^*)\| < 1
		\]
		for all $\alpha \in \DC$ and $0 <r<1$. Consequently, $r\cdot \underline{T}^*=(rT_1^*, rT_2^*, r^2T_3^*) \in \mathfrak{M}_\Pe$, where $\underline{T}^*=(T_1^*, T_2^*, T_3^*)$. Consider the function given by $\widehat{f}(z)=\overline{f(\bar{z})}$, which is holomorphic on $\overline{\Pe}$ as $f \in \text{Hol}(\overline{\Pe})$. Since $f \in SA(\Pe)$, we have for every $\underline{S}=(S_1, S_2, S_3) \in \mathfrak{M}_\Pe$ that $(S_1^*, S_2^*, S_3^*) \in \mathfrak{M}_\Pe$ and so, 
		\[
		\|\widehat{f}(S_1, S_2, S_3)\|=\|f(S_1^*, S_2^*, S_3^*)^*\|=\|f(S_1^*, S_2^*, S_3^*)\| \leq 1.
		\]
		Thus, $\widehat{f} \in SA(\Pe) \cap \text{Hol}(\overline{\Pe})$. Since $\overline{\Pe}$ is polynomially convex, Oka-Weil theorem guarantees the existence of a sequence of polynomials $\{p_n\}_{n \in \N}$ such that $\underset{n \to \infty}{\lim}\|p_n-f\|_{\infty, \overline{\Pe}}=0$. Clearly, the sequence of polynomials $\{\widehat{p}_n\}_{n \in \N}$ converges uniformly over $\overline{\Pe}$ to $\widehat{f}$. For each $z_j=(z_j^{(1)}, z_j^{(2)}, z_j^{(3)})$ and $r \in (0, 1)$, let $r.z_j=(rz_j^{(1)}, rz_j^{(2)}, r^2z_j^{(3)})$. Take $c_1, \dotsc, c_m \in \C$. Then 
		\begin{align*}
			\left\|\overset{m}{\underset{i, j=1}{\sum}}c_j\overline{f(r\cdot z_j)}k(., z_j)\right\|_{\HS_m(k)}^2 
			&=\lim_{n \to \infty}\left\|\overset{m}{\underset{i, j=1}{\sum}}c_j\overline{p_n(r\cdot z_j)}k(., z_j)\right\|_{\HS_m(k)}^2\\
			&=\lim_{n \to \infty}\left\|\widehat{p_n}(r.\underline{T}^*)\left(\overset{m}{\underset{i, j=1}{\sum}}c_jk(., z_j)\right)\right\|_{\HS_m(k)}^2\\
			&=\left\|\widehat{f}(r.\underline{T}^*)\left(\overset{m}{\underset{i, j=1}{\sum}}c_jk(., z_j)\right)\right\|^2_{\HS_m(k)} \\
			& \leq \left\|\overset{m}{\underset{i, j=1}{\sum}}c_jk(., z_j)\right\|^2_{\HS_m(k)}, 
		\end{align*}
		where the last inequality holds, because $\widehat{f} \in SA(\Pe)$ and $r\cdot \underline{T}^* \in \mathfrak{M}_\Pe$ for $0<r<1$. Letting $r \uparrow 1$ in the above inequality, it follows that 
		\[
		\overset{m}{\underset{i, j=1}{\sum}}\overline{c}_ic_j\left(1-f(z_i)\overline{f(z_j)}\right)k(z_i, z_j) \geq 0,
		\]
		where we have used the fact that $\langle k(., z_j), k(., z_i)\rangle_{\HS_m(k)}=k(z_i, z_j)$ for $1 \leq i, j \leq m$. Thus, $(1-f(z)\overline{f(w)})k(z, w) \succcurlyeq 0$ for all $k \in AK(\Pe)$ and $f \in SA(\Pe) \cap \text{Hol}(\overline{\Pe})$. Now, let us consider $f \in SA(\Pe)$ and let $0 <r<1$. By definition of $\Pe$, it follows that $r\cdot z \in \Pe$ for all $z \in \overline{\Pe}$. Consider the map $f_r: \overline{\Pe} \to \C$ defined as $f_r(z^{(1)}, z^{(2)}, z^{(3)})=f(rz^{(1)}, rz^{(2)}, r^2z^{(3)})$. Evidently, the map $f_r \in \text{Hol}(\overline{\Pe})$. Furthermore, $f_r \in SA(\Pe)$ since $f \in SA(\Pe)$ and $\Pe$ is a $(1, 1, 2)$-quasi-balanced domain. As proved earlier, $(1-f_r(z)\overline{f_r(w)})k(z, w) \succcurlyeq 0$ for all $k \in AK(\Pe)$ and $0<r<1$. Letting $r\uparrow 1$ gives the desired conclusion.
		
		\medskip 
		
		\noindent $(2) \implies (3)$. Suppose $(1-f(z)\overline{f(w)})k(z, w) \succcurlyeq 0$ for all $k \in AK(\Pe)$. Consider the map 
		\[
		g: \Pe \times \Pe \to \C \quad \text{defined as} \quad g(z, w)=1-f(z)\overline{f(w)}.
		\]
		Evidently, $g$ is a self-adjoint function such that $gk \succcurlyeq 0$ for all $k \in AK(\Pe)$. The desired conclusion now follows from Theorem \ref{thm_sa_P}.
		
		\medskip 
		
		\noindent $(3) \implies (4)$. Suppose there exist $\xi, \nabla \in C(\DC)^+_\Pe$ and $\delta \in \C_\Pe^+$ such that for all $z, w \in \Pe$,
		\begin{align}\label{eqn_RP_002}
			1-f(z)\overline{f(w)}=\xi(z, w)(1-\mathrm{J}(z)\overline{\mathrm{J}(w)})+\nabla(z, w)(1-\mathrm{j}(z)\overline{\mathrm{j}(w)})+(1-(z^{(2)}\overline{w}^{(2)}\slash 4))\delta(z, w).
		\end{align}
		By Proposition \ref{prop_prelim_I_P}, there exist Hilbert spaces $\HS_1, \HS_2$ and functions $L_i: \Pe \to \mathcal{B}(C(\DC),\HS_i)$ for $i=1,2$  such that $\xi(z, w)(f\overline{h})=\langle L_1(z)f, L_1(w)h \rangle_{\HS_1} $ and $\nabla(z, w)(u\overline{v})=\langle L_2(z)u, L_2(w)v \rangle_{\HS_2}$ for all $(z, w) \in \Pe \times \Pe$ and $f, h, u, v \in C(\DC)$. Also, there exist unital $*$-representations $\rho_1: C(\DC) \to \mathcal{B}(\HS_1)$ and $\rho_2: C(\DC) \to \mathcal{B}(\HS_2)$ such that 
		\begin{align}\label{eqn_RP_003}
			L_1(z)(fh)=\rho_1(f)L_1(z)(h) \quad \text{and} \quad L_2(z)(uv)=\rho_2(u)L_2(z)(v)
		\end{align}
		for all $z \in \Pe$ and $\{f, h, u, v\} \subseteq C(\DC)$. Since $\delta$ is a weak kernel on $\Pe \times \Pe$, we have by Theorem 2.53 in \cite{Agler_McCarthy} that there exist a Hilbert space $\HS_3$ and a function $g: \Pe \to \HS_3$ such that $\delta(z, w)=\langle g(z), g(w) \rangle_{\HS_3}$ for all $(z, w) \in \Pe \times \Pe$. Hence, \eqref{eqn_RP_002} can be re-written as
		\begin{align*}
			1-f(z)\overline{f(w)}
			&=\langle L_1(z)1, L_1(w)1 \rangle_{\HS_1}-\langle L_1(z)\mathrm{J}(z), L_1(w)\mathrm{J}(w) \rangle_{\HS_1}+\langle L_2(z)1, L_2(w)1 \rangle_{\HS_2}\\
			& \quad -\langle L_2(z)\mathrm{j}(z), L_2(w)\mathrm{j}(w) \rangle_{\HS_2} +\langle g(z), g(w) \rangle_{\HS_3}-\langle (z^{(2)}\slash 2)g(z), (w^{(2)}\slash 2 )g(w) \rangle_{\HS_3}
		\end{align*}
		for all $z, w \in \Pe$. Let $\HS=\HS_1\oplus \HS_2 \oplus \HS_3$. We have by \eqref{eqn_RP_003} for all $z, w \in \Pe$ that
		\begin{align}\label{eqn_RP_004}
			& 1+\left\langle Y(z) \begin{bmatrix} L_1(z)1 \\ L_2(z)1 \\ g(z)\end{bmatrix}, Y(w)\begin{bmatrix} L_1(w)1 \\ L_2(w)1 \\ g(w)\end{bmatrix} \right\rangle_{\HS} =f(z)\overline{f(w)}+\left\langle  \begin{bmatrix} L_1(z)1 \\ L_2(z)1 \\ g(z)\end{bmatrix}, \begin{bmatrix} L_1(w)1 \\ L_2(w)1 \\ g(w)\end{bmatrix} \right\rangle_{\HS}, 
		\end{align}
		where $Y(z)=\begin{bmatrix} \rho_1(\mathrm{J}(z)) & 0 & 0\\ 
			0 & \rho_2(\mathrm{j}(z)) & 0\\
			0 & 0 & (z^{(2)}\slash 2)I_{\HS_3}
		\end{bmatrix}$. Consider the subspaces of $\C \oplus \HS$ given by
		\begin{align*}
			\HS^{(1)}
			= \overline{\text{span}} \left\{
			\begin{bmatrix}
				1 \\
				Y(z)
				\begin{bmatrix}
					L_1(z)1 \\
					L_2(z)1 \\
					g(z)
				\end{bmatrix}
			\end{bmatrix}
			: z \in \Pe
			\right\} \quad \text{and} \quad \HS^{(2)}
			= \overline{\text{span}} \left\{
			\begin{bmatrix}
				f(z) \\
				\begin{bmatrix}
					L_1(z)1 \\
					L_2(z)1 \\
					g(z)
				\end{bmatrix}
			\end{bmatrix}
			: z \in \Pe
			\right\}.
		\end{align*}
		It follows from \eqref{eqn_RP_004} that the linear operator $V: \HS^{(1)} \to \HS^{(2)}$ defined as 
		\[
		V\begin{bmatrix}
			1 \\
			Y(z)
			\begin{bmatrix}
				L_1(z)1 \\
				L_2(z)1 \\
				g(z)
			\end{bmatrix}
		\end{bmatrix}
		=\begin{bmatrix}
			f(z) \\
			\begin{bmatrix}
				L_1(z)1 \\
				L_2(z)1 \\
				g(z)
			\end{bmatrix}
		\end{bmatrix}
		\]
		is an isometry. Adding an infinite-dimensional summand to $\HS^{(1)}$, if necessary, $V$ can be extended to a unitary from $\C \oplus \HS$ onto $\C \oplus \HS$ (see Section 11.3 in \cite{Agler_McCarthy}). With respect to the decomposition $\C \oplus \HS$, we can write $V=\begin{bmatrix} A & B \\ C & D \end{bmatrix}$. By definition of $V$, it then follows that 
		\begin{align}\label{eqn_RP_005}
			f(z)=A1+BY(z)\begin{bmatrix}
				L_1(z)1 \\
				L_2(z)1 \\
				g(z)
			\end{bmatrix} \quad \text{and} \quad C1+DY(z)\begin{bmatrix}
				L_1(z)1 \\
				L_2(z)1 \\
				g(z)
			\end{bmatrix}=\begin{bmatrix}
				L_1(z)1 \\
				L_2(z)1 \\
				g(z)
			\end{bmatrix}.
		\end{align}
		Since $\|Y(z)\|<1$ and $\|D\| \leq 1$, it follows from \eqref{eqn_RP_005} that $\begin{bmatrix}
			L_1(z)1 \\
			L_2(z)1 \\
			g(z)
		\end{bmatrix}=(I_\HS-DY(z))^{-1}C1$ and so, $f(z)=A+BY(z)(I_\HS-DY(z))^{-1}C$. Hence, $f \in UC(\Pe)$.  
		
		\medskip 
		
		\noindent $(4) \implies (1)$. The proof follows the same line of argument as that of Proposition 3.2 in \cite{Drit2007_I}. The similar arguments were followed in \cite{Jain} to prove the realization theorem of the tetrablock. We have already established this implication for functions in $UC(\Pe)$ associated with simple representations in Lemma \ref{lem_prelim_III_P}. We now show that an arbitrary function $f \in UC(\Pe)$ can be approximated pointwise by a net $\{f_\beta\} \subseteq UC(\Pe)$ consisting of functions associated with simple representations, from which the desired conclusion follows. We divide the proof into further steps for a clear understanding.
		\smallskip 
		
		\noindent{\textit{Step (1).}} Let $f \in UC(\Pe)$. Then there exist associated unital $*$-representations $\rho_1: C(\DC) \to \mathcal{B}(\HS_1)$ and $\rho_2: C(\DC) \to \mathcal{B}(\HS_2)$ together with a Hilbert space $\HS_3$ such that 
		\[
		f(z)=A+BY(z)(I_{\HS}-DY(z))^{-1}C, \quad \text{where} \quad Y(z)=\begin{bmatrix}
			\rho_1(\mathrm{J}(z)) & 0 & 0 \\
			0 & \rho_2(\mathrm{j}(z)) & 0 \\
			0 & 0 & (z^{(2)}\slash 2)I_{\HS_3}
		\end{bmatrix}
		\]
		and $\HS=\HS_1\oplus \HS_2 \oplus \HS_3$, and $V=\begin{bmatrix} A & B \\ C & D\end{bmatrix}$ is a unitary on $\C \oplus \HS$. By representation theorem for spectral measures, there exist unique $\mathcal{B}(\HS_1)$-valued and $\mathcal{B}(\HS_2)$-valued spectral measures $\mu_1$ and $\mu_2$ on the Borel $\sigma$-algebra of $\DC$ such that
		\begin{align}\label{eqn_RP_006}
			\rho_1(h)=\int_{\DC}h(\al)d\mu_1 \quad \text{and} \quad \rho_2(g)=\int_{\DC}g(\al)d\mu_2
		\end{align}
		for all $h, g \in C(\DC)$.
		\smallskip 
		
		\noindent{\textit{Step (2).}}  In this step, using the above representations of $\rho_1$ and $\rho_2$, we construct a net with the desired properties. Consider a collection $\mathfrak{F}$ consisting of pairs $\beta=(F, \epsilon)$, where $F$ is a finite subset of $\Pe$ and $\epsilon >0$ ordered by $(F_1, \epsilon_1)\leq (F_2, \epsilon_2)$ if $F_1 \subseteq F_2$ and $\epsilon_1 \geq \epsilon_2$. This makes $\mathfrak{F}$ a directed set. Let $\beta=(F, \epsilon) \in \mathfrak{F}$ and let us consider the collection given by
		\[
		\Lambda=\left\{\psi_{\alpha}, \chi_\alpha, \pi_2 : \al \in \DC \right\},
		\]
		where $\chi_\alpha, \pi_2: \Pe \to \C$ are given by 
		\[
		\chi_{\alpha}(z^{(1)}, z^{(2)}, z^{(3)})=\Phi_{\alpha}(z^{(2)}, z^{(3)}) \quad \text{and} \quad \pi_2(z^{(1)}, z^{(2)}, z^{(3)})=z^{(2)}\slash 2.
		\]
		Clearly, $\Lambda \subseteq B(\Pe, \DC)$ (or equivalently, $\DC^{\Pe})$, the collection of bounded functions from $\Pe$ into $\DC$, which is endowed with the topology of pointwise convergence. By Tychonov's theorem, $B(\Pe, \DC)$ is a compact Hausdorff space. Now $\DC$ is a Tychonov space in the usual metric topology (that is, points are closed and for any closed set and point
		disjoint from it, there is a continuous bounded function separating the two). Consequently, $\DC^\Pe$ is Tychonov and so, $\Lambda$ is a Tychonov space. Let $\Lambda_1=\left\{\psi_{\alpha}: \alpha \in \DC \right\}$ and $\Lambda_2=\left\{ \chi_\alpha : \alpha \in \DC \right\}$.  Consider the maps $\eta_1, \eta_2: \DC \to \Lambda$ given by
		$\eta_1(\alpha)=\psi_{\alpha}$ and $\eta_2(\alpha)=\chi_\alpha$. Evidently, $\eta_1, \eta_2$ are continuous maps with $\eta_1(\DC)=\Lambda_1$ and $\eta_2(\DC)=\Lambda_2$. Therefore, $\Lambda_1, \Lambda_2$ are compact subsets of $\Lambda$. By compactness of $\Lambda_1$, there exists a finite collection $\mathcal{U}=\{U^{\beta}_1, \dots, U^{\beta}_m\}$ of nonempty open sets in $\Lambda$ which covers $\{\psi_{\alpha} : \al \in \DC\}$ with the property that $|\psi'(z) - \psi''(z)| < \varepsilon$ for every $\psi', \psi'' \in U^{\beta}_j$ for $1 \leq j \leq m$ and $z \in F$. We construct a partition $\{\Delta_1^\beta, \dotsc, \Delta_m^\beta\}$ of $\Lambda_1$ from $\mathcal{U}$ as follows:
		\[
		\Delta^{\beta}_1 = U^{\beta}_1, \quad
		\Delta^{\beta}_2 = U^{\beta}_2 \setminus U^{\beta}_1, \quad 
		\dots, \quad 
		\Delta^{\beta}_m = U^{\beta}_m \setminus (U_1^{\beta} \cup \dotsc \cup U_{m-1}^\beta).
		\]
		Evidently, $\{\Delta^{\beta}_1, \dotsc, \Delta^{\beta}_m\}$ consists of mutually disjoint Borel subsets of $\Lambda$ that cover $\Lambda_1$ with the property that for $1 \leq j \leq m$, we have 
		\begin{align}\label{eqn_RP_007}
			|\psi'(z)-\psi''(z)|<\epsilon \quad \text{for every} \quad \psi', \psi'' \in \Delta_j^\beta, \ z \in F.
		\end{align}
		For the compact set $\Lambda_2$, we proceed similarly as above and obtain a partition $\{\Omega_1^\beta, \dotsc, \Omega_\ell^\beta\}$ of $\Lambda_2$ into Borel subsets of $\Lambda$ with the following property: for $1 \leq i \leq \ell$,
		\begin{align}\label{eqn_RP_008}
			|\chi'(z) - \chi''(z)| < \epsilon 
			\quad \text{for all } \chi', \chi'' \in \Omega_i^\beta 
			\text{ and for every } z \in F.
		\end{align}
		Also, consider collections $\{\psi_1^\beta, \dotsc, \psi_m^\beta\}$ and $\{\chi_1^\beta, \dotsc, \chi_\ell^\beta\}$ such that $\psi_j^\beta=\psi_{\theta_j} \in \Delta_j^\beta$ for some $\theta_j \in \DC$ and $\chi_i^{\beta}=\chi_{\alpha_i} \in \Omega_i^\beta$ for some $\alpha_i \in \DC$, where $j=1, \dotsc, m$ and $i=1, \dotsc, \ell$. Define the maps $\rho_{1, \beta}: C(\DC) \to \mathcal{B}(\HS_1)$ and $\rho_{2, \beta}: C(\DC) \to \mathcal{B}(\HS_2)$ as follows:
		\[
		\rho_{1, \beta}(h)=\overset{m}{\underset{j=1}{\sum}}\mu_1\left(\eta_1^{-1}(\Delta_j^\beta)\right)h(\theta_j) \quad \text{and} \quad \rho_{2, \beta}(g)=\overset{\ell}{\underset{i=1}{\sum}}\mu_2\left(\eta_2^{-1}(\Omega_i^\beta)\right)g(\al_i).
		\]
		Since $\{\Delta_1^\beta, \dotsc, \Delta_m^\beta\}$ is a partition of $\Lambda_1$ and $\eta_1(\DC)=\Lambda_1$, it follows that $\{\eta_1^{-1}(\Delta_1^\beta), \dotsc, \eta_1^{-1}(\Delta_m^\beta)\}$ is a partition of $\DC$. Consequently, the operators $\mu_1\left(\eta_1^{-1}(\Delta_j^\beta)\right)$ are pairwise orthogonal projections for $1 \leq j \leq m$ such that $\overset{m}{\underset{j=1}{\sum}}\mu_1\left(\eta_1^{-1}(\Delta_j^\beta)\right)=I_{\HS_1}$ and so, $\rho_{1, \beta}$ is a simple representation.  Similarly, one can show that $\rho_{2, \beta}$ is a simple representation. It follows from \eqref{eqn_RP_007} and \eqref{eqn_RP_008} that 
		\[
		\|\rho_{1, \beta}(\mathrm{J}(z))-\rho_1(\mathrm{J}(z))\| \leq \epsilon \quad \text{and} \quad  \|\rho_{2, \beta}(\mathrm{j}(z))-\rho_2(\mathrm{j}(z))\|\leq \epsilon
		\]
		for every $z \in F$. Consider the function $f_\beta: \Pe \to \C$ given by 
		\[
		f_\beta(z)=A+BY_\beta(z)(I_\HS-DY_\beta(z))^{-1}C, \quad \text{where} \quad Y_\beta(z)=\begin{bmatrix}
			\rho_{1, \beta}(\mathrm{J}(z)) & 0 & 0 \\
			0 & \rho_{2,\beta}(\mathrm{j}(z)) & 0 \\
			0 & 0 & (z^{(2)}\slash 2)I_{\HS_3}
		\end{bmatrix}.
		\]
		Putting everything together, we have constructed a net $\{f_\beta:\beta \in \mathfrak{F}\}$ of functions in $UC(\Pe)$ associated with simple representations. We have by Lemma \ref{lem_prelim_III_P} that each of these $f_\beta \in SA(\Pe)$.
		
		\smallskip
		
		\noindent{\textit{Step (3).}} We prove that the net $\{f_\beta\}$ converges pointwise to $f$ on $\Pe$. Let $z \in \Pe$ and $\epsilon'>0$. Choose a finite set $F$ in $\Pe$ such that $z \in F$ and consider $\beta'=(F, \epsilon') \in \mathfrak{F}$. As $\|\mathrm{J}(w)\|_{\infty, \DC}<1, \|\mathrm{j}(w)\|_{\infty, \DC}<1$ for all $w \in \Pe$, we can define positive scalars $\delta, r_1, r_2$ and $\epsilon$ as follows:
		\[
		\delta=\underset{w \in F}{\min}\left\{\frac{1-\|\mathrm{J}(w)\|_{\infty, \DC}}{2}, \frac{1-\|\mathrm{j}(w)\|_{\infty, \DC}}{2}\right\}, \ \ r_1=1-\frac{\delta}{2}, \ \ r_2=\underset{w \in F}{\max}\frac{|w^{(2)}|}{2} \ \ \text{and} \ \ \epsilon<\min\{\delta\slash 2, \epsilon'\}.
		\]
		Choose $\beta=(F, \epsilon) \in \mathfrak{F}$. By the definition of $\mathfrak{F}$, it follows that $\beta' \leq \beta$. Since $\rho_1, \rho_2$ are unital $*$-representations, we have by Step (2) that for each $w \in F$, $\|Y_\beta(w)-Y(w)\| \leq \epsilon$ together with
		\begin{align*}
			& \|\rho_{1, \beta}(\mathrm{J}(w))\| \leq \|\rho_{1, \beta}(\mathrm{J}(w))-\rho_1(\mathrm{J}(w))\|+\|\rho_1(\mathrm{J}(w))\| \leq \epsilon +1-2\delta<r_1, \ \text{and} \\
			& \|\rho_{2, \beta}(\mathrm{j}(w))\| \leq \|\rho_{2, \beta}(\mathrm{j}(w))-\rho_2(\mathrm{j}(w))\|+\|\rho_2(\mathrm{j}(w))\| \leq \epsilon +1-2\delta<r_1.
		\end{align*}
		Thus, $\|Y_\beta(w)\| =\max\{\|\rho_{1, \beta}(\mathrm{J}(w))\|, \|\rho_{2, \beta}(\mathrm{j}(w)), |w^{(2)}|\slash 2: w\in F \} \leq r$ for all $w \in F$, where $r=\max\{r_1, r_2\}$. Since $D$ is a contraction, $\|DY_\beta{(w)}\| \leq r$ for all $w \in F$. Let $w \in F$. Note that 
		\begin{small}
			\begin{align*}
				(DY_\beta(w))^n-(DY(w))^n&=D(Y_\beta(w)-Y(w))(DY_\beta(w))^{n-1}+(DY(w))D(Y_\beta(w)-Y(w))(DY_\beta(w))^{n-2}\\
				&\quad +\dotsc+(DY(w))^{n-1}D(Y_\beta(w)-Y(w)) 
			\end{align*}
		\end{small}
		and so, $\|(DY_\beta(w))^n-(DY(w))^n\| \leq n\epsilon r^{n-1}$. Then 
		\begin{align*}
			& \|Y_\beta(w)(I_\HS-DY_\beta(w))^{-1}-Y(w)(I_\HS-DY(w))^{-1}\|\\
			& \leq \|Y_\beta(w)-Y(w)\| \|(I_\HS-DY_\beta(w))^{-1}\|+\|Y(w)\| \|(I_\HS-DY_\beta(w))^{-1}-(I_\HS-DY(w))^{-1}\|\\
			& \leq \frac{\epsilon}{(1-r)^2}
		\end{align*}
		for all $w \in F$. Thus the bounded net $\{f_\beta\}$ converges pointwise to $f$ on $\Pe$. 
		
		\smallskip
		
		\noindent \textit{Step (4).} Since the net $\{f_\beta\}$ is uniformly bounded by $1$, we have by Theorem 1.4.31 in \cite{Scheidemann} that there exists a subsequence $\{f_{\beta_k}\}$ of the net that converges uniformly over compacts subsets of $\Pe$ to $f$. Let $\underline{T} \in \mathfrak{M}_\Pe$, and let $T$ be acting on a Hilbert space $\mathcal{K}$. As proved in Step (2), $f_{\beta_k} \in SA(\Pe)$ and so, $\|f_{\beta_k}(\underline{T})\| \leq 1$ for each $k$. By functional calculus presented as in \cite{Vasilescu}, it follows that 
		\[
		f_{\beta_k}(\underline{T})=\frac{1}{(2\pi i)^3}\int_{\partial \Omega} M_{\underline{T}}(z)f_{\beta_k}(z)dz,
		\]
		where $\Omega$ is an open set in $\C^3$ containing $\sigma_T(\underline{T})$ with $C^1$-boundary, $\overline{\Omega} \subseteq \Pe$ and $M_{\underline{T}}(z)$ is the Martinelli kernel corresponding to the triple $\underline{T}$. For each $x, y \in \mathcal{K}$, consider $d\mu(z)=\langle M_{\underline{T}}(z)x, y\rangle dz$, which gives a measure on the Borel subsets of $\partial \Omega$. By dominated convergence theorem, we have
		\[
		\lim_{k \to \infty}\langle f_{\beta_k}(\underline{T})x, y\rangle_{\mathcal{K}} = \frac{1}{(2\pi i)^3}\int_{\partial \Omega} \lim_{k \to \infty}f_{\beta_k}(z)d\mu(z)=\langle f(\underline{T})x, y\rangle_{\mathcal{K}} 
		\]
		and so, $\|f(\underline{T})\| \leq 1$. Thus, $f \in SA(\Pe)$, which completes the proof. 
	\end{proof}
	
	As an application of the above realization theorem for the pentablock, we present the following interpolation theorem on the pentablock.
	
	\begin{thm}\label{thm_interpolation_P}
		Let $F=\{z_1, \dotsc, z_n\} \subseteq \Pe$ and $\lm_1, \dotsc, \lm_n \in \DC$. The following are equivalent:
		\begin{enumerate}[leftmargin=*]
			\item[$(1)$] there exists a function $f \in SA(\Pe)$ such that $f(z_i)=\lm_i$ for $1 \leq i \leq n$;
			\item[$(2)$] $\begin{bmatrix} (1-\lm_i\overline{\lm_j})k(z_i, z_j)\end{bmatrix}_{i, j=1}^n \geq 0$ for all $k \in AK(\Pe)$;
			\item[$(3)$] there exist $\xi, \nabla \in C(\DC)^+_F$ and $\delta \in \C_F^+$ such that for all $i, j \in \{1, \dotsc, n\}$,
			\[
			1-\lambda_i\overline{\lambda}_j=\xi(z_i, z_j)(1-\mathrm{J}(z_i)\overline{\mathrm{J}(z_j)})+\nabla(z_i, z_j)(1-\mathrm{j}(z_i)\overline{\mathrm{j}(z_j)})+(1-(z_i^{(2)}\overline{z}_j^{(2)}\slash 4))\delta(z_i, z_j),
			\] 
			where the maps $z\mapsto \mathrm{J}(z)$ and $z\mapsto \mathrm{j}(z)$ are as in \eqref{eqn_J(z)}.	
		\end{enumerate}
	\end{thm}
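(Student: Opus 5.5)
The plan is to prove the cycle $(1)\Rightarrow(2)\Rightarrow(3)\Rightarrow(1)$, reusing the machinery already built for Theorem \ref{thm_realization_P}.

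For $(1)\Rightarrow(2)$, take $f\in SA(\Pe)$ with $f(z_i)=\lm_i$. By the implication $(1)\Rightarrow(2)$ of Theorem \ref{thm_realization_P}, $(1-f(z)\overline{f(w)})k(z,w)\succcurlyeq 0$ on $\Pe$ for every $k\in AK(\Pe)$. Restricting to $F$ gives exactly the positivity of $[(1-\lm_i\overline{\lm_j})k(z_i,z_j)]$.

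For $(2)\Rightarrow(3)$, I will run the cone separation argument from the proof of Theorem \ref{thm_sa_P} on the finite set $F$. Set $G=[1-\lm_i\overline{\lm_j}]_{i,j=1}^n$ and suppose $G\notin\mathfrak{P}_F$. Hahn--Banach, applied to the closed cone $\mathfrak{P}_F$ with nonempty interior, gives a self-adjoint $C$ with $\operatorname{tr}(GC)<0\le \operatorname{tr}(MC)$ for all $M\in\mathfrak{P}_F$. As before, $C_*(z_i,z_j)=C_{ji}$ is then a weakly admissible weak kernel on $F$; extend it by zero to $\Pe$.

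The one new point is that hypothesis (2) only concerns admissible kernels on $\Pe$, so I must pass from $C_*$ to a genuine member of $AK(\Pe)$. I would take $k_\epsilon=C_*+\epsilon k_0$ with some fixed $k_0\in AK(\Pe)$. A natural candidate is any admissible kernel that is nonvanishing on the diagonal, for instance the kernel of a suitable RKHS. Alternatively, I would simply take $k_0(z,w)=1/(1-z^{(2)}\overline{w}^{(2)}/4)$ multiplied appropriately, provided $AK(\Pe)$ is nonempty, which the paper implicitly uses in Theorem \ref{thm_sa_P}. Weak admissibility is preserved under sums, and $k_0$ makes the diagonal nonzero, so $k_\epsilon\in AK(\Pe)$. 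Hypothesis (2) then gives $\sum_{i,j}(1-\lm_i\overline{\lm_j})k_\epsilon(z_i,z_j)\ge 0$. Letting $\epsilon\to0$ yields $\operatorname{tr}(GC)\ge0$, a contradiction. Hence $G\in\mathfrak{P}_F$, which is precisely (3) with $\xi,\nabla\in C(\DC)^+_F$ and $\delta\in\C^+_F$. The nonemptiness of $AK(\Pe)$ is where I expect some care is needed; it is the same issue already handled tacitly in Theorem \ref{thm_sa_P}.

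For $(3)\Rightarrow(1)$, I will repeat the lurking isometry argument of $(3)\Rightarrow(4)$ in Theorem \ref{thm_realization_P}, now only over the points of $F$. Proposition \ref{prop_prelim_I_P} applied to $\xi$ and $\nabla$ on $F$ gives $L_1,L_2,\rho_1,\rho_2$. Factoring $\delta=\langle g(z),g(w)\rangle$ on $F$ gives $g$. The identity in (3) then becomes the analogue of \eqref{eqn_RP_004} with $f(z_i)$ replaced by $\lm_i$, holding for $z,w\in F$. This defines an isometry
\[
\begin{bmatrix}1\\ Y(z_i)u_i\end{bmatrix}\mapsto\begin{bmatrix}\lm_i\\ u_i\end{bmatrix},\qquad u_i=\begin{bmatrix}L_1(z_i)1\\ L_2(z_i)1\\ g(z_i)\end{bmatrix},
\]
between finite dimensional subspaces of $\C\oplus\HS$. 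After enlarging $\HS$ if necessary, it extends to a unitary $V=\begin{bmatrix}A&B\\C&D\end{bmatrix}$. Now define
\[
f(z)=A+BY(z)(I_\HS-DY(z))^{-1}C
\]
for all $z\in\Pe$, using the same $\rho_1,\rho_2$ and $\HS_3$. Then $f\in UC(\Pe)$, and exactly as in \eqref{eqn_RP_005} we get $f(z_i)=\lm_i$. Finally, Theorem \ref{thm_realization_P} $(4)\Rightarrow(1)$ gives $f\in SA(\Pe)$.
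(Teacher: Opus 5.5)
Your proposal is correct and follows the paper's proof essentially step for step. The paper also argues $(1)\Rightarrow(2)$ by restricting from Theorem \ref{thm_realization_P}, and $(2)\Rightarrow(3)$ by running the cone-separation argument of Theorem \ref{thm_sa_P} on $F$ with $g(z_i,z_j)=1-\lm_i\overline{\lm}_j$, including the same $C_*+\epsilon k$ perturbation. For $(3)\Rightarrow(1)$ it likewise uses the lurking isometry on $F$ followed by $(4)\Rightarrow(1)$ of Theorem \ref{thm_realization_P}.

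The point you left hedged, $AK(\Pe)\neq\emptyset$, follows at once from the diagonal kernel $k_0(z,w)=1$ for $z=w$ and $k_0(z,w)=0$ otherwise. It is admissible because $|z^{(2)}|/2$, $|\Phi_\al|$ and $|\psi_\al|$ are all $<1$ on $\Pe$. Your alternative candidate $1/(1-z^{(2)}\overline{w}^{(2)}/4)$ is not admissible: at the points $(0,0,a)$ and $(0,0,b)$ with $a\neq b$, condition (2) with $\al=1$ fails.
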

	
	\begin{proof}
		The implication $(1) \implies (2)$ follows from the equivalence of $(1)$ and $(2)$ in Theorem \ref{thm_realization_P}. The part $(2) \implies (3)$ can be obtained by applying the proof of Theorem \ref{thm_sa_P} to the self-adjoint map $g : F \times F \to \mathbb{C}$ defined as $g(z_i, z_j) = 1 - \lm_i \overline{\lm}_j$. It remains to prove that $(3) \implies (1)$. Suppose there exist $\xi, \nabla \in C(\DC)^+_F$ and $\delta \in \C_F^+$ such that for all $i, j \in \{1, \dotsc, n\}$,
		\begin{align}\label{eqn_IP_P_002}
			1-\lambda_i\overline{\lambda}_j=\xi(z_i, z_j)(1-\mathrm{J}(z_i)\overline{\mathrm{J}(z_j)})+\nabla(z_i, z_j)(1-\mathrm{j}(z_i)\overline{\mathrm{j}(z_j)})+(1-(z_i^{(2)}\overline{z}_j^{(2)}\slash 4))\delta(z_i, z_j).
		\end{align} 
		By Proposition \ref{prop_prelim_I_P}, there exist Hilbert spaces $\HS_1, \HS_2$ and functions $L_1: F \to \mathcal{B}(C(\DC),\HS_1)$ and $L_2: F \to \mathcal{B}(C(\DC), \HS_2)$ satisfying 
		$\xi(z_i, z_j)(f\overline{h})=\langle L_1(z_i)f, L_1(z_j)h \rangle_{\HS_1}$ and $\nabla(z_i, z_j)(u\overline{v})=\langle L_2(z_i)u, L_2(z_j)v \rangle_{\HS_2}$ for $1 \leq i, j \leq n$ and $\{f, h, u, v\} \subseteq C(\DC)$. Also, there exist unital $*$-representations $\rho_1: C(\DC) \to \mathcal{B}(\HS_1)$ and $\rho_2: C(\DC) \to \mathcal{B}(\HS_2)$ such that 
		\begin{align}\label{eqn_IP_P_003}
			L_1(z_i)(fh)=\rho_1(f)L_1(z_i)(h) \quad \text{and} \quad L_2(z_i)(uv)=\rho_2(u)L_2(z_i)(v)
		\end{align}
		for $1 \leq i, j \leq n$ and $\{f, h, u, v\} \subseteq  C(\DC)$. Since $\delta$ is a weak kernel on $F \times F$, we have by Theorem 2.53 in \cite{Agler_McCarthy} that there exists a Hilbert space $\HS_3$ and a function $g: F \to \HS_3$ such that $\delta(z_i, z_j)=\langle g(z_i), g(z_j) \rangle_{\HS_3}$ for $1 \leq i, j \leq n$. Hence, \eqref{eqn_IP_P_002} can be re-written as
		\begin{align*}
			1-\lm_i \overline{\lm}_j
			&=\langle L_1(z_i)1, L_1(z_j)1 \rangle_{\HS_1}-\langle L_1(z_i)\mathrm{J}(z_i), L_1(z_j)\mathrm{J}(z_j) \rangle_{\HS_1}+\langle L_2(z_i)1, L_2(z_j)1 \rangle_{\HS_2} \notag \\
			& \quad -\langle L_2(z_i)\mathrm{j}(z_i), L_2(z_j)\mathrm{j}(z_j) \rangle_{\HS_2} +\langle g(z_i), g(z_j) \rangle_{\HS_3}-\langle (z_i^{(2)}\slash 2)g(z_i), (z_j^{(2)}\slash 2)g(z_j) \rangle_{\HS_3}
		\end{align*}
		for $1 \leq i, j \leq n$. Let $\HS=\HS_1\oplus \HS_2 \oplus \HS_3$. Consider the subspaces of $\C \oplus \HS$ given by
		\begin{align*}
			\HS^{(1)}
			= \overline{\text{span}} \left\{
			\begin{bmatrix}
				1 \\
				\rho_1(\mathrm{J}(z_i))L_1(z_i)1 \\
				\rho_2(\mathrm{j}(z_i))L_2(z_i)1 \\
				(z_i^{(2)}\slash 2)g(z_i)
			\end{bmatrix}
			: 1 \leq i \leq n
			\right\} \quad \text{and} \quad \HS^{(2)}
			= \overline{\text{span}} \left\{
			\begin{bmatrix}
				\lm_i \\
				L_1(z_i)1 \\
				L_2(z_i)1 \\
				g(z_i)
			\end{bmatrix}
			: 1 \leq i \leq n
			\right\}.
		\end{align*}
		Thus, the linear operator $V: \HS^{(1)} \to \HS^{(2)}$ given by
		\[
		V\begin{bmatrix}
			1 \\
			\rho_1(\mathrm{J}(z_i))L_1(z_i)1 \\
			\rho_2(\mathrm{j}(z_i))L_2(z_i)1 \\
			(z_i^{(2)}\slash 2)g(z_i)
		\end{bmatrix}
		=\begin{bmatrix}
			\lm_i \\
			L_1(z_i)1 \\
			L_2(z_i)1 \\
			g(z_i)
		\end{bmatrix}
		\]
		is an isometry. Now extend $V$ to a unitary on $\C \oplus \HS$ and so, $V=\begin{bmatrix} A & B \\ C & D \end{bmatrix} : \C \oplus \HS \to \C \oplus \HS$. We have by Part $(4) \implies (1)$ of Theorem \ref{thm_realization_P} that the map $f: \Pe \to \C$ given by
		\[
		f(z)=A+BY(z)\left(I_\HS-DY(z)\right)^{-1}C, \quad \text{where} \quad Y(z)=\begin{bmatrix} \rho_1(\mathrm{J}(z)) & 0 & 0\\ 0 & \rho_2(\mathrm{j}(z)) & 0 \\ 0 & 0 & (z^{(2)}\slash 2)I_{\HS_3} \end{bmatrix}
		\]
		is in $SA(\Pe)$. It is not difficult to see that $f(z_i)=\lambda_i$ for $1 \leq i \leq n$, which completes the proof. 
	\end{proof}

	\section{Extension theorem for the pentablock}\label{sec_ext} 
	
\noindent Let $W$ be a subset of $\Pe$. Denote by $\mathscr{HE}(W)$ the collection of all bounded functions on $W$ that admit an extension to a holomorphic function in a neighbourhood of $W$. In this section, we study the following extension problem: given a subset $W \subseteq \mathbb{P}$, determine necessary and sufficient conditions under which a non-zero function $f \in \mathscr{HE}(W)$ admits a norm-preserving extension to a function $g$ in $H^\infty(\mathbb{P})$ such that $g\slash \|f\|_{\infty, W} \in SA(\Pe)$. To do so, we introduce some terminologies and definitions. Recall from \eqref{eqn_P} that $z=(z^{(1)}, z^{(2)}, z^{(3)}) \in \Pe$ if and only if 
$|z^{(2)}|<2, |\Phi_\alpha(z^{(2)}, z^{(3)})|<1$ and $|\psi_{\al}(z^{(1)}, z^{(2)}, z^{(3)})|<1$	for all $\al \in \DC$, where 
	\[
	\Phi_\alpha(z^{(2)}, z^{(3)})=\frac{2\alpha z^{(3)}-z^{(2)}}{2-\alpha z^{(2)}} \quad \text{and} \quad  \psi_{\al}(z^{(1)}, z^{(2)}, z^{(3)})=\frac{(1-|\al|^2)z^{(1)}}{1- z^{(2)}\al+z^{(3)}\al^2}.
	\]
	Let $Q\Pe$ be the class of all commuting triples $\underline{T}=(T_1, T_2, T_3)$ of Hilbert space operators with $\sigma_T(\underline{T}) \subseteq \Pe$ such that 
	for all $\al \in \DC$,
	\[
	\|T_2\| \leq 2, \quad \|\Phi_{\al}(T_2, T_3)\| \leq 1 \quad \text{and} \quad \|\psi_{\al}(T_1, T_2, T_3)\| \leq 1,
	\]
	where $\Phi_\al(T_2, T_3)=(2\al T_3-T_2)(2-\al T_2)^{-1}$ and $\psi_\alpha(T_1, T_2, T_3)=(1-|\al|^2)T_1(I-\al T_2-\al^2T_3)^{-1}$ for $\alpha \in \DC$. Following the terminology in \cite{Mittal} for quantized domains in $\C^n$, we refer the class $Q\Pe$ as the \textit{quantum pentablock}. By quantization, we mean the process of replacing scalars with commuting operators subject to analogous norm constraints. Recall from Section \ref{sec_penta} that $\mathfrak{M}_\Pe$ is the set of all commuting triples $\underline{T}=(T_1, T_2, T_3)$ of Hilbert space operators such that 
	for all $\al \in \DC$,
	\[
	\|T_2\| <2, \quad \|\Phi_{\al}(T_2, T_3)\|<1 \quad \text{and} \quad \|\psi_{\al}(T_1, T_2, T_3)\|<1.
	\]
	It was proved in the beginning of Section \ref{sec_penta} that $\sigma_T(\underline{T}) \subseteq \Pe$ for all $\underline{T} \in \mathfrak{M}_\Pe$. Consequently, the class $\mathfrak{M}_\Pe$ is contained in the quantum pentablock $Q\Pe$.
	
	\smallskip 
	
	For a subset $W$ of $\Pe$, we say that a commuting triple $\underline{T}=(T_1, T_2, T_3)$ is \textit{subordinate} to $W$ if $\sigma_T(\underline{T}) \subset W$ and $g(\underline{T})=0$ whenever $g$ is holomorphic in a neighbourhood of $W$ and $g|_W=0$. If $f$ is a function on $W$ that admits a holomorphic extension in a neighbourhood of $W$ and $\underline{T}=(T_1, T_2, T_3)$ is subordinate to $W$, then define $f(\underline{T})$ by setting $f(\underline{T})=g(\underline{T})$, where $g$ is any holomorphic extension of $f$ in a neighbourhood of $W$. The definition of $f(\underline{T})$ is independent of the choice of the holomorphic extension $g$ of $f$. To see this, let $h$ be any other holomorphic extension of $f$ in some neighbourhood of $W$. Clearly, the function $g-h$ is holomorphic on a neighbourhood of $W$ and $g-h=0$ on $W$. Since $\underline{T}$ is subordinate to $W$, we have that $g(\underline{T})=h(\underline{T})$. Having introduced the required definitions, we present here a foretaste of the main theorem of this section. 

\begin{thm}\label{thm_ext_P}
Let $W \subseteq \Pe$ and let $f \in \mathscr{HE}(W)$ be non-zero. Then there exists $g \in H^\infty(\Pe)$ such that $\displaystyle \frac{1}{\|f\|_{\infty, W}} g \in SA(\Pe), g|_W=f$ and $\|g\|_{\infty, \Pe}=\|f\|_{\infty, W}$ if and only if 
\[
\|f(\underline{T})\| \leq \|f\|_{\infty, W} \quad \text{for every $\underline{T} \in Q\Pe$ subordinate to $W$}.
\]
\end{thm}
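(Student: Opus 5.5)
The plan is to normalize first: replacing $f$ by $f/\|f\|_{\infty, W}$, we may assume $\|f\|_{\infty, W}=1$. The sufficiency direction will reduce to the interpolation theorem (Theorem \ref{thm_interpolation_P}) on finite subsets of $W$, followed by a normal-families limit. The necessity direction will use the scaling $r\cdot\underline{T}=(rT_1, rT_2, r^2T_3)$ to pass from $Q\Pe$ to $\mathfrak{M}_\Pe$.

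\emph{Necessity.} Let $g$ be as in the statement and let $\underline{T}\in Q\Pe$ be subordinate to $W$.
\begin{enumerate}
\item Since $g$ is holomorphic on $\Pe$, which is a neighbourhood of $W$, and $g|_W=f$, the definition gives $f(\underline{T})=g(\underline{T})$.
\item For $0<r<1$, the $(1,1,2)$-quasi-balanced scaling argument used in the proof of $(1)\Rightarrow(2)$ of Theorem \ref{thm_realization_P} turns the non-strict inequalities defining $Q\Pe$ into strict ones. Hence $r\cdot\underline{T}\in\mathfrak{M}_\Pe$.
\item Since $g\in SA(\Pe)$, this gives $\|g_r(\underline{T})\|=\|g(r\cdot\underline{T})\|\le 1$, where $g_r(z)=g(rz^{(1)}, rz^{(2)}, r^2z^{(3)})$. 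The first equality is the composition rule for the holomorphic functional calculus.
\item Since $\sigma_T(\underline{T})$ is a compact subset of $\Pe$, $g_r\to g$ uniformly on a compact neighbourhood of $\sigma_T(\underline{T})$ as $r\uparrow 1$.
\item The Martinelli-integral representation and dominated convergence, exactly as in Step (4) of that proof, then give $g_r(\underline{T})\to g(\underline{T})$ weakly. Therefore $\|f(\underline{T})\|=\|g(\underline{T})\|\le 1$.
\end{enumerate}

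\emph{Sufficiency.} Fix a finite set $F=\{z_1,\dots,z_n\}\subseteq W$ of distinct points. I will verify condition $(2)$ of Theorem \ref{thm_interpolation_P} with $\lambda_i=f(z_i)$.
\begin{enumerate}
\item Let $k\in AK(\Pe)$ and let $\HS_n(k)=\text{span}\{k(\cdot,z_i)\}$. Define $\underline{T}$ on $\HS_n(k)$ by $T_j^*=M_{z^{(j)}}^*|_{\HS_n(k)}$, as in the proof of Theorem \ref{thm_realization_P}.
\item The vectors $k(\cdot,z_i)$ are nonzero, because $k(z_i,z_i)\neq0$. They are joint eigenvectors of $\underline{T}^*$ with distinct joint eigenvalues $\overline{z_i}$, so they are linearly independent. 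Consequently $\underline{T}$ is jointly diagonalizable with $\sigma_T(\underline{T})=F\subseteq\Pe$.
\item Admissibility of $k$ gives $\|T_2\|\le2$, $\|\Phi_\al(T_2,T_3)\|\le1$ and $\|\psi_\al(\underline{T})\|\le1$, as in \eqref{eqn_RP_001}. Thus $\underline{T}\in Q\Pe$.
\item For any $h$ holomorphic near $W$ we have $h(\underline{T})^*k(\cdot,z_i)=\overline{h(z_i)}k(\cdot,z_i)$. Hence $h(\underline{T})=0$ whenever $h|_W=0$, so $\underline{T}$ is subordinate to $W$.
\item The hypothesis gives $\|f(\underline{T})^*\|\le1$. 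Testing this on $\sum_j c_jk(\cdot,z_j)$ yields $[(1-f(z_i)\overline{f(z_j)})k(z_i,z_j)]\ge0$.
\end{enumerate}
Theorem \ref{thm_interpolation_P} then produces $g_F\in SA(\Pe)$ with $g_F|_F=f|_F$.

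\emph{Passing to the limit.} Direct the finite subsets $F\subseteq W$ by inclusion. The net $\{g_F\}$ is bounded by $1$ on $\Pe$. By Montel's theorem the family is compact in the metrizable topology of local uniform convergence, so some subnet converges locally uniformly to a function $g\in\text{Hol}(\Pe)$. Each $w\in W$ eventually lies in $F$, so $g(w)=f(w)$, i.e. $g|_W=f$. The Martinelli-integral argument of Step (4) in Theorem \ref{thm_realization_P} shows $\|g(\underline{T})\|\le1$ for every $\underline{T}\in\mathfrak{M}_\Pe$, so $g\in SA(\Pe)$. Finally, $1=\|f\|_{\infty,W}\le\|g\|_{\infty,\Pe}\le1$.

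The main obstacle I expect is the middle step: checking that the compressed triple on $\HS_n(k)$ really lies in $Q\Pe$ and is subordinate to $W$. The key points there are joint diagonalizability and the spectrum lying in $F\subseteq\Pe$. By comparison, the limiting arguments are routine repetitions of steps already carried out in the proof of Theorem \ref{thm_realization_P}.
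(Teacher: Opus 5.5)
Your proposal is correct. The necessity half is essentially the paper's lemma: scale to $r\cdot\underline{T}\in\mathfrak{M}_\Pe$ and let $r\uparrow 1$. The scaling step you take from the paper can be checked directly from the identities $\Phi_\al(rT_2,r^2T_3)=r\,\Phi_{r\al}(T_2,T_3)$ and $\psi_\al(r\cdot\underline{T})=r\,\frac{1-|\al|^2}{1-r^2|\al|^2}\,\psi_{r\al}(\underline{T})$. Both right-hand sides have norm at most $r<1$ when $\underline{T}\in Q\Pe$.

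For sufficiency you take a genuinely different and shorter route.

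\emph{The paper's route.} It sets up an extremal problem. It introduces $SA_f(\Pe)$ and $\rho_f(\textbf{z},\boldsymbol{\lambda})$, proves that the normalized kernel set $K_{\textbf{z}}$ is compact, and obtains an extremal interpolant (Lemma \ref{lem_304}). It then picks a kernel $\boldsymbol{\kappa}\in K_{\textbf{z}}$ at which $[(\rho^2-\lm_i\overline{\lm}_j)\boldsymbol{\kappa}(i,j)]$ is singular. From $\boldsymbol{\kappa}$ it builds a single triple $\underline{S}\in Q\Pe$, subordinate to the nodes, with $\|g(\underline{S})\|=\rho_f$ (Lemma \ref{lem_305}). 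The hypothesis is applied only to that triple, giving $\rho_f\le\|f\|_{\infty,W}$.

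\emph{Your route.} You apply the hypothesis to every compressed multiplication triple on $\text{span}\{k(\cdot,z_i)\}$, one for each $k\in AK(\Pe)$. This yields condition $(2)$ of Theorem \ref{thm_interpolation_P} at once. So the extremal apparatus becomes unnecessary: Theorem \ref{thm_int_P_II}, the infimum over $\Lambda$, and the minimum-eigenvalue argument all drop out. Your net over all finite subsets of $W$ also gives $g=f$ on $W$ directly, avoiding the countable dense subset and the continuity argument.

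\emph{What the paper's route buys.} For each finite data set it produces a minimal-norm interpolant together with one finite-dimensional witness $\underline{S}\in Q\Pe$ at which that norm is attained. Hence the hypothesis is only needed on a countable family of specific triples. For the theorem as stated, your argument is complete.

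\emph{One detail to spell out.} Your linear-independence claim for the kernel functions relies on the multiplication operators being bounded on $\HS(k)$. Admissibility gives this:
\begin{itemize}
\item taking $\al=0$ in condition (3) makes $M_{z^{(1)}}$ contractive;
\item condition (1) gives $\|M_{z^{(2)}}\|\le 2$;
\item condition (2) with $\al=1$ then shows that $M_{2z^{(3)}-z^{(2)}}=M_{\Phi_1}M_{2-z^{(2)}}$, and hence $M_{z^{(3)}}$, is bounded.
\end{itemize}
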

To prove Theorem \ref{thm_ext_P}, we adopt an approach based on the interpolation theorem, in the spirit of \cite{Agler_McCarthy_2003}, \cite{Tirtha_Sau} and \cite{Jain} for the bidisc, symmetrized bidisc and tetrablock, respectively. A first step in this direction is the following lemma which establishes the necessary condition of Theorem \ref{thm_ext_P}.

\begin{lem}
	Let $W \subseteq \Pe$ and let $f \in \mathscr{HE}(W)$ be non-zero. If there exists $g \in H^\infty(\Pe)$ such that $\displaystyle \frac{1}{\|f\|_{\infty, W}} g \in SA(\Pe), g|_W=f$ and $\|g\|_{\infty, \Pe}=\|f\|_{\infty, W}$, then $
	\|f(\underline{T})\| \leq \|f\|_{\infty, W}$ for every $\underline{T} \in Q\Pe$ subordinate to $W$.
\end{lem}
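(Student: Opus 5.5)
Let $m=\|f\|_{\infty,W}>0$ and $h=g/m\in SA(\Pe)$. Take $\underline{T}\in Q\Pe$ subordinate to $W$. Since $g\in H^\infty(\Pe)$ is holomorphic on $\Pe$, which is an open neighbourhood of $W$, and $g|_W=f$, $g$ is an admissible holomorphic extension of $f$. Hence $f(\underline{T})=g(\underline{T})=m\,h(\underline{T})$. It therefore suffices to show $\|h(\underline{T})\|\le 1$ for every $\underline{T}\in Q\Pe$ with $\sigma_T(\underline{T})\subseteq\Pe$. The only issue is that $h\in SA(\Pe)$ controls $h$ only on the smaller class $\mathfrak{M}_\Pe$, where the inequalities are strict.

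\textbf{Step 1: scaling.} For $0<r<1$ put $r\cdot\underline{T}=(rT_1,rT_2,r^2T_3)$. I claim $r\cdot\underline{T}\in\mathfrak{M}_\Pe$. Clearly $\|rT_2\|\le 2r<2$. For the other two norms, I would use quasi-balancedness at the operator level:
\[
\Phi_\al(rT_2,r^2T_3)=r\,\Phi_{r\al}(T_2,T_3)\,(2-r\al T_2)(2-r\al T_2)^{-1}.
\]
More directly, $\Phi_\al(rT_2,r^2T_3)=(2r\al\, rT_3-rT_2)(2-r\al T_2)^{-1}=r\,\Phi_{r\al}(T_2,T_3)$. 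Since $r\al\in\DC$, this has norm at most $r<1$. Similarly,
\[
\psi_\al(rT_1,rT_2,r^2T_3)=(1-|\al|^2)\,rT_1\,(I-r\al T_2+r^2\al^2T_3)^{-1}=r\,\frac{1-|\al|^2}{1-|r\al|^2}\,\psi_{r\al}(T_1,T_2,T_3).
\]
Because $r<1$ we have $1-|\al|^2\le 1-|r\al|^2$, so this has norm at most $r<1$. The inverses exist because $(T_2,T_3)$ has spectrum in $\G_2$, on which $1-\beta z^{(2)}+\beta^2z^{(3)}$ does not vanish; this is the same argument used earlier in the paper. The sign convention in the paper's formula for $\psi_\al(T)$ is inconsistent, but I would use the one matching \eqref{eqn_P}. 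Thus $\|h(r\cdot\underline{T})\|\le1$.

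\textbf{Step 2: letting $r\to1$.} This is the main obstacle. Define $h_r(z)=h(rz^{(1)},rz^{(2)},r^2z^{(3)})$. Then $h_r(\underline{T})=h(r\cdot\underline{T})$ by the composition rule of the Taylor functional calculus, since the scaling map is polynomial. Because $\sigma_T(\underline{T})$ is a compact subset of $\Pe$, choose an open $\Omega$ with $C^1$ boundary and $\sigma_T(\underline{T})\subset\Omega\subset\overline\Omega\subset\Pe$. As $r\to1$, $h_r\to h$ uniformly on the compact set $\overline\Omega$ by continuity of $h$. The Martinelli integral representation used in Step (4) of Theorem \ref{thm_realization_P} then gives $h_r(\underline{T})\to h(\underline{T})$ in norm, since $\|\int_{\partial\Omega}M_{\underline{T}}(z)(h_r-h)(z)\,dz\|\le C\sup_{\partial\Omega}|h_r-h|$. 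Hence $\|h(\underline{T})\|\le1$, and so $\|f(\underline{T})\|=m\|h(\underline{T})\|\le\|f\|_{\infty,W}$.
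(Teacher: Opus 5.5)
Your proof is correct and follows essentially the same route as the paper: rescale $\underline{T}\in Q\Pe$ to $r\cdot\underline{T}\in\mathfrak{M}_\Pe$, apply the Schur--Agler bound for $g/\|f\|_{\infty,W}$, and let $r\to1$ using the Martinelli integral representation of the Taylor functional calculus. Your identities $\Phi_\al(rT_2,r^2T_3)=r\,\Phi_{r\al}(T_2,T_3)$ and $\psi_\al(r\cdot\underline{T})=r\frac{1-|\al|^2}{1-r^2|\al|^2}\psi_{r\al}(\underline{T})$ supply the verification that $r\cdot\underline{T}\in\mathfrak{M}_\Pe$, which the paper only asserts. For the limit, you use uniform convergence on $\overline{\Omega}$ to get norm convergence, whereas the paper uses dominated convergence to get weak-operator convergence; either suffices.
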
	

\begin{proof}
	Let $\underline{T}=(T_1, T_2, T_3) \in Q\Pe$ be subordinate to $W$, and let $\underline{T}$ be acting on a Hilbert space $\mathcal{K}$.  Set $r\cdot z=(rz^{(1)}, rz^{(2)}, r^2z^{(3)})$ for $z=(z^{(1)}, z^{(2)}, z^{(3)}) \in \C^3$ and $0<r<1$.  Let $r_n=1-1\slash n$ for $n \in \N$. Since $\Pe$ is $(1, 1, 2)$-quasi-balanced, it follows that $r_n\cdot z \in \Pe$ for all $z \in \Pe$. Define $g_n: \Pe \to \C$ as $g_n(z)=g(r_n\cdot z)$. Then $\{g_n\}$ is a uniformly bounded sequence of holomorphic functions converging pointwise to $g$. An application of the dominated convergence theorem gives that $g_n(\underline{T})$ converges to $g(\underline{T})$ in the weak operator topology. Since $r_n\cdot \underline{T} \in \mathfrak{M}_\Pe$ and $\frac{1}{\|f\|_{\infty, W}}g \in SA(\Pe)$, we have
	\[
	|\langle g(\underline{T})x, y\rangle |=\lim_{n \to \infty}|\langle g(r_n\cdot\underline{T})x, y\rangle | \leq \lim_{n \to \infty}\|g(r_n \cdot \underline{T})\| \|x\| \| y\| \leq \|f\|_{\infty, W} \|x\| \|y\|.
	\]
Thus, $\|f(\underline{T})\| =\|g(\underline{T})\|\leq \|f\|_{\infty, W}$. 
	\end{proof}	

To prove the converse to Theorem \ref{thm_ext_P}, we first reformulate the interpolation theorem for $\Pe$, that is, Theorem \ref{thm_interpolation_P}. Let $z_1=(z_1^{(1)}, z_1^{(2)}, z_1^{(3)}), \dotsc, z_n=(z_n^{(1)}, z_n^{(2)}, z_n^{(3)})$ be distinct points in $\Pe$. We denote this data by $\textbf{z}$. Let $K_\textbf{z}$ be the collection of all $n \times n$ strictly positive definite matrices $[k(i, j)]_{i, j=1}^n$ with $k(i, i)=1$ for $1 \leq i \leq n$ such that
\begin{equation}\label{eqn_301}
	\left.
	\begin{aligned}
		&  \left[\left(1-z_i^{(2)}\overline{z}_j^{(2)}\slash 4\right)k(i, j)\right]_{i, j=1}^n \geq 0, \\[5pt]
		&  \left[\left(1-\Phi_\al(z_i^{(2)}, z_i^{(3)})\overline{\Phi_\al( z_j^{(2)}, z_j^{(3)})}\right)k(i, j)\right]_{i, j=1}^n \geq 0 
		\quad \text{for all} \ \al \in \DC, \\[5pt]
		&  \left[\left(1-\psi_{\al}(z_i)\overline{\psi_{\al}(z_j)}\right)k(i, j)\right] \geq 0 
		\quad \text{for all} \ \al \in \DC.
	\end{aligned}
	\right\}
\end{equation}
Evidently, $K_\textbf{z}$ is a subset of $n \times n$ self-adjoint complex matrices. We prove that $K_\textbf{z}$ is compact by showing that it is closed and bounded with respect to the matrix norm. The fact that $K_\textbf{z}$ is bounded follows since $\|k\| \leq \text{tr}(k)=n$ for all $k \in K_\textbf{z}$. Let $\{k_m\}$ be a sequence in $K_\textbf{z}$ that converges to $k$ in the matrix norm. By continuity arguments, it follows that $k$ is positive semi-definite and $k$ satisfies the conditions in \eqref{eqn_301}. It remains to show that $k$ is a strictly positive definite matrix. Assume on the contrary that there is a non-zero vector $v=(v_1, \dotsc, v_n)^t \in \C^n$ such that $kv=0$. Let $M_1, M_2$ and $M_3$ be  $n \times n$ diagonal matrices whose $(i, i)$-th entries are $z_i^{(1)}, z_i^{(2)}$ and $z_i^{(3)}$, respectively. It follows from \eqref{eqn_301} that $k(M_1v)=k(M_2v)=k(M_3v)=0$ and so, $k(p(M_1, M_2, M_3)v)=0$ for any holomorphic polynomial $p$ in three variables. Since $v \ne 0$, there exists some $i$ such that $v_i \ne 0$. Since $z_1, \dotsc, z_n$ are distinct, one can choose a polynomial $p$ such that $p(z_i)=1$ and $p(z_j)=0$ for $ 1 \leq j \leq n$ with $j \ne i$. Then $k(p(M_1, M_2, M_3)v)$ equals $v_i$ times the $(i, i)$-th column of $k$ and thus $k(i, i)=0$, which gives a contradiction as $k(i, i)=1$. Hence, the collection $K_\textbf{z}$ is compact. We shall use the compactness of $K_\textbf{z}$ to prove our main result. To this end, we present our next result, which is simply a reformulation of the equivalence $(1)$ and $(2)$ in Theorem \ref{thm_interpolation_P}.

\begin{thm}\label{thm_int_P_II}
Let $z_1,\dotsc, z_n$ be distinct points in $\Pe$, and let $\lm_1, \dotsc, \lm_n \in \DC$. Then there exists $g \in SA(\Pe)$ such that $g(z_i)=\lm_i$ for $1 \leq i \leq n$ if and only if $\begin{bmatrix} (1-\lm_i\overline{\lm_j})k(i, j)\end{bmatrix}_{i, j=1}^n \geq 0$ for all $k \in K_\textbf{z}$.
\end{thm}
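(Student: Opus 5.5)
We will derive this from the equivalence of $(1)$ and $(2)$ in Theorem \ref{thm_interpolation_P}. That theorem says an interpolant $g \in SA(\Pe)$ exists if and only if $\bigl[(1-\lm_i\overline{\lm_j})k(z_i,z_j)\bigr]_{i,j=1}^n \geq 0$ for every $k \in AK(\Pe)$. So it suffices to show that this family of conditions is equivalent to the one indexed by $K_\textbf{z}$.

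\emph{Necessity.} Let $g \in SA(\Pe)$ interpolate, and fix $k \in K_\textbf{z}$. We extend $k$ to an admissible kernel on $\Pe$. Let $\epsilon>0$ and choose any $k_0 \in AK(\Pe)$. For example, $k_0(z,w)=(1-z^{(2)}\overline{w}^{(2)}/4)^{-1}\cdot$(a suitable product) may work, or more simply the Szeg\H{o}-type kernel of the realization space of a $UC(\Pe)$ model. Alternatively, define $\tilde k$ on $\Pe\times\Pe$ by $\tilde k(z_i,z_j)=k(i,j)$ and $\tilde k=0$ off $F\times F$. This $\tilde k$ is weakly admissible, since the three positivity conditions in Definition \ref{defn_AK_P} restricted to $F$ are exactly \eqref{eqn_301}, and off $F$ everything vanishes. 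Then $\tilde k+\epsilon k_0$ is an admissible kernel. This is the same trick used in the proof of Theorem \ref{thm_sa_P}. Applying $(2)$ to $\tilde k+\epsilon k_0$ and letting $\epsilon\to 0$ gives $\bigl[(1-\lm_i\overline{\lm_j})k(i,j)\bigr]\geq 0$. To guarantee that some admissible kernel $k_0$ exists, we can take $k_0(z,w)=\bigl[(1-z^{(2)}\overline{w}^{(2)}/4)\bigr]^{-1}$ restricted suitably, or invoke that $AK(\Pe)$ is nonempty, e.g.\ via a kernel whose multipliers form an $\mathfrak{M}_\Pe$-type tuple. This existence check is a minor point we would settle first.

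\emph{Sufficiency.} Assume positivity for all $k\in K_\textbf{z}$, and let $k\in AK(\Pe)$. Then $A=[k(z_i,z_j)]$ satisfies \eqref{eqn_301} and is positive semidefinite with positive diagonal, but it may be singular. We perturb it: let $P$ be the Pick-type matrix $[\delta_0(z_i,z_j)]$ with $\delta_0=(1-z^{(2)}\overline w^{(2)}/4)^{-1}$ times suitable factors. More robustly, take $B=[\tilde{k}(z_i,z_j)]$ for some fixed admissible kernel $\tilde k$ that is strictly positive definite on $F$. Such a kernel can be obtained from the compactness argument: the matrices $A+\epsilon B$ are positive definite whenever $B$ is, and they still satisfy \eqref{eqn_301} because each condition is linear in the kernel. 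Next, normalize by $D=\mathrm{diag}(A+\epsilon B)^{-1/2}$. Conjugating by a positive diagonal matrix preserves all Schur-product positivity conditions, since $[c_i\overline{c_j}h(i,j)]\geq 0$ whenever $[h(i,j)]\geq 0$. The normalized matrix therefore lies in $K_\textbf{z}$. By hypothesis, $\bigl[(1-\lm_i\overline{\lm_j})(A+\epsilon B)_{ij}\bigr]\geq 0$ after undoing the diagonal scaling. Letting $\epsilon\to0$ yields the condition for $k$, and Theorem \ref{thm_interpolation_P} produces $g$.

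The main obstacle is producing one admissible kernel that is strictly positive definite on the finite set $F$ of distinct points. For this we would take the reproducing kernel of a concrete $UC(\Pe)$ or Schur-type space, or argue directly. The candidate is $k=\sum_{m}\epsilon_m\,(p_m(z)\overline{p_m(w)})$-type constructions, which are too weak. Instead we would use products of the Szeg\H{o} kernel of the third condition family, checking positivity via $\mathfrak{M}_\Pe$ membership of the multiplication tuple. Distinctness of the points then forces positive definiteness, by the same polynomial-separation argument used above to prove that $K_\textbf{z}$ is closed.
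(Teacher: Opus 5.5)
Your route matches what the paper intends. The paper gives no argument beyond calling Theorem~\ref{thm_int_P_II} a reformulation of $(1)\Leftrightarrow(2)$ in Theorem~\ref{thm_interpolation_P}. Your two translation steps are sound: extending $k\in K_{\textbf{z}}$ by zero and adding $\epsilon k_0$, and normalizing $A+\epsilon B$ by a positive diagonal congruence, which preserves \eqref{eqn_301}.

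\textbf{The gap.} Both directions rest on objects you never produce: an admissible kernel $k_0$ on $\Pe$, and a strictly positive definite $B$ satisfying \eqref{eqn_301}. You call this the main obstacle and leave it open.

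Your one explicit candidate, $k_0(z,w)=(1-z^{(2)}\overline{w}^{(2)}/4)^{-1}$, is not admissible. Take $z=(0,0,a)$ and $w=(0,0,b)$ with $a\neq b$ in $\D$; these lie in $\Pe$ by Theorem~\ref{thm_connect_P}. Then $k_0\equiv 1$ on $\{z,w\}$ and $\Phi_\al(z^{(2)},z^{(3)})=\al z^{(3)}$. So the $2\times 2$ matrix in condition (2) of Definition~\ref{defn_AK_P} has determinant $-|\al|^2|a-b|^2<0$ for $\al\neq 0$. Your other suggestions (Szeg\H{o}-type products, kernels of $UC(\Pe)$ models) are not actual constructions.

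\textbf{The fix.} Take the diagonal kernel $\delta(z,w)=1$ if $z=w$ and $\delta(z,w)=0$ otherwise. It lies in $AK(\Pe)$:
\begin{itemize}
\item it is positive semi-definite with $\delta(z,z)=1$;
\item each of the three conditions yields, after grouping repeated points, a block-diagonal matrix whose blocks are $1-|\phi(z)|^2$ times an all-ones matrix;
\item $1-|\phi(z)|^2>0$ by \eqref{eqn_P}.
\end{itemize}
Its restriction to $F$ is $I_n$, which serves as your $B$.

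You can in fact avoid $\epsilon$ altogether. For necessity, set $\hat k=k$ on $F\times F$ and $\hat k=\delta$ elsewhere. This block-diagonal kernel is admissible and restricts to $k$ on $F$.

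For sufficiency, no perturbation is needed. Let $A=[k(z_i,z_j)]$ with $k\in AK(\Pe)$.
\begin{itemize}
\item Each condition reads $A\geq \Delta A\Delta^*$ with $\Delta=\mathrm{diag}(\phi(z_i))$, so $Av=0$ forces $A\Delta^*v=0$.
\item Hence $\ker A$ is invariant under $\mathrm{diag}(\overline{z_i^{(1)}})$ (take $\al=0$ in the third condition), under $\mathrm{diag}(\overline{z_i^{(2)}})$, and under $\mathrm{diag}(\overline{\Phi_1(z_i^{(2)},z_i^{(3)})})$.
\item Multiplying the last by $2I-\mathrm{diag}(\overline{z_i^{(2)}})$ shows $\ker A$ is invariant under $\mathrm{diag}(\overline{z_i^{(3)}})$. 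Since the points are distinct, it is then invariant under every diagonal matrix.
\item Because $k(z_i,z_i)>0$, this gives $\ker A=\{0\}$.
\end{itemize}
This is exactly the argument the paper uses to show $K_{\textbf{z}}$ is closed. You invoke it only for your hypothetical $k_0$, but applied to $k$ itself it gives $DAD\in K_{\textbf{z}}$ directly, with $D=\mathrm{diag}(k(z_i,z_i)^{-1/2})$.
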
 

We now introduce an auxiliary class of functions in $H^\infty(\Pe)$. For $W \subseteq \Pe$ and $f \in \mathscr{HE}(W)$, set
\[
SA_f(\Pe)=\{g \in H^\infty(\Pe): \|g(\underline{T})\| \leq \|f\|_{\infty, W} \ \text{for all $\underline{T} \in \mathfrak{M}_\Pe$} \},
\]
which is clearly nonempty. For $g \in H^\infty(\Pe)$, let us define $\|g\|_{\mathfrak{M}_\Pe}=\sup\{\|g(\underline{T})\|: \underline{T} \in \mathfrak{M}_\Pe \}$. Evidently, $\|g\|_{\infty, \Pe} \leq \|g\|_{\mathfrak{M}_\Pe} \leq \|f\|_{\infty, W}$ for all $g \in SA_f(\Pe)$. For the given data $\textbf{z}=\{z_1, \dotsc, z_n\} \subseteq W$ and $\boldsymbol{\lambda}=(\lambda_1, \dotsc, \lambda_n) \in \C^n$, we denote by 
\[
\rho_f(\textbf{z}, \boldsymbol{\lambda})=\inf\left\{\|g\|_{\mathfrak{M}_\Pe}: g \in SA_f(\Pe) \ \text{and} \ g(z_i)=\lambda_i, 1 \leq i \leq n \right\}.
\]
It is easy to see that $|\lm_i| \leq \rho_f(\textbf{z}, \boldsymbol{\lambda})$ for $1 \leq i \leq n$. Our next result shows that $\rho_f(\textbf{z}, \boldsymbol{\lambda})$ is attained at some function $g$ in $SA_f(\Pe)$, that is, $g(z_i)=\lm_i$ for $1 \leq i \leq n$ and $\rho_f(\textbf{z}, \boldsymbol{\lambda})=\|g\|_{\mathfrak{M}_\Pe}$. Such a $g$ is called an \textit{extremal function} for the data $\textbf{z}$ and $\boldsymbol{\lambda}$.

\begin{lem}\label{lem_304}
Let $W$ be a subset of $\Pe$ and let $f \in \mathscr{HE}(W)$ be non-zero. For given $\textbf{z}=\{z_1, \dotsc, z_n\} \subseteq W$ and $\boldsymbol{\lambda}=(\lambda_1, \dotsc, \lambda_n) \in \C^n$, there exists an extremal function in $SA_f(\Pe)$ for the data $\textbf{z}$ and $\boldsymbol{\lambda}$.
\end{lem}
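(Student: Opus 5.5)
The plan is a normal-families argument: take a minimizing sequence, extract a locally uniformly convergent subsequence by Montel's theorem, and show that the limit is admissible and attains the infimum. If no $g \in SA_f(\Pe)$ interpolates the data, then $\rho_f(\textbf{z}, \boldsymbol{\lambda})=\inf \emptyset=+\infty$. Every element of $SA_f(\Pe)$ has $\|\cdot\|_{\mathfrak{M}_\Pe}\leq \|f\|_{\infty, W}<\infty$, so an extremal function can exist only when the admissible set is nonempty. The lemma is therefore to be read under this standing assumption; this holds, for instance, when $\lambda_i=f(z_i)$ and the norm-preserving extension problem is solvable. From now on I assume the set
\[
\mathcal{A}=\{g \in SA_f(\Pe): g(z_i)=\lambda_i, \ 1 \leq i \leq n\}
\]
is nonempty, so that $\rho:=\rho_f(\textbf{z}, \boldsymbol{\lambda})$ is a finite number with $\rho \leq \|f\|_{\infty, W}$.

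First, choose a sequence $\{g_m\} \subseteq \mathcal{A}$ with $\|g_m\|_{\mathfrak{M}_\Pe} \downarrow \rho$. Since $\|g_m\|_{\infty, \Pe} \leq \|g_m\|_{\mathfrak{M}_\Pe} \leq \|f\|_{\infty, W}$, the sequence is uniformly bounded on $\Pe$. By Montel's theorem (as in Step (4) of the proof of Theorem \ref{thm_realization_P}), after passing to a subsequence, $g_m \to g$ uniformly on compact subsets of $\Pe$ for some $g \in \text{Hol}(\Pe)$. It follows that $\|g\|_{\infty, \Pe} \leq \|f\|_{\infty, W}$, so $g \in H^\infty(\Pe)$, and that $g(z_i)=\lim_m g_m(z_i)=\lambda_i$ for $1 \leq i \leq n$.

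The key step is to show $\|g(\underline{T})\| \leq \rho$ for every $\underline{T} \in \mathfrak{M}_\Pe$. Fix $\underline{T} \in \mathfrak{M}_\Pe$. By property (5) of $\mathfrak{M}_\Pe$, the Taylor spectrum $\sigma_T(\underline{T})$ is a compact subset of $\Pe$. Choose an open set $\Omega$ with $C^1$ boundary such that $\sigma_T(\underline{T}) \subseteq \Omega$ and $\overline{\Omega} \subseteq \Pe$ is compact. The functional calculus from \cite{Vasilescu} gives
\[
h(\underline{T})=\frac{1}{(2\pi i)^3}\int_{\partial\Omega} M_{\underline{T}}(z)h(z)\,dz \quad \text{for } h \in \text{Hol}(\Pe),
\]
where the Martinelli kernel $M_{\underline{T}}$ is norm-continuous, hence bounded, on the compact set $\partial\Omega$. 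Consequently
\[
\|g_m(\underline{T})-g(\underline{T})\| \leq C_{\underline{T}} \sup_{\partial \Omega}|g_m-g| \longrightarrow 0,
\]
so $g_m(\underline{T}) \to g(\underline{T})$ in norm. This is stronger than the weak convergence used in Step (4), and I expect this continuity of the Taylor functional calculus under locally uniform convergence to be the only technically delicate point. From it,
\[
\|g(\underline{T})\|=\lim_m \|g_m(\underline{T})\| \leq \liminf_m \|g_m\|_{\mathfrak{M}_\Pe} = \rho.
\]

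Taking the supremum over $\underline{T} \in \mathfrak{M}_\Pe$ gives $\|g\|_{\mathfrak{M}_\Pe} \leq \rho \leq \|f\|_{\infty, W}$. Hence $g \in SA_f(\Pe)$, and $g$ interpolates the data, so $g \in \mathcal{A}$. By the definition of $\rho$ as an infimum over $\mathcal{A}$, we also have $\|g\|_{\mathfrak{M}_\Pe} \geq \rho$. Therefore $\|g\|_{\mathfrak{M}_\Pe}=\rho_f(\textbf{z}, \boldsymbol{\lambda})$, and $g$ is the required extremal function.
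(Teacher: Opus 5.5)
Your proof is correct and follows the paper's argument: a minimizing sequence, Montel's theorem, convergence of the Martinelli-integral functional calculus (you get norm convergence where the paper uses weak-operator convergence via dominated convergence), and then $\rho_f(\textbf{z}, \boldsymbol{\lambda}) \leq \|g\|_{\mathfrak{M}_\Pe} \leq \rho_f(\textbf{z}, \boldsymbol{\lambda})$. Your remark that the set of interpolating functions in $SA_f(\Pe)$ must be nonempty is valid: it fails, for example, if some $|\lambda_i| > \|f\|_{\infty, W}$, and the paper's proof assumes it tacitly when it starts from a minimizing sequence.
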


\begin{proof}
Let $\{g_m\}$ be a sequence in $SA_f(\Pe)$ such that $g_m(z_i)=\lm_i$ for $1 \leq i \leq n$ and $\rho_f(\textbf{z}, \boldsymbol{\lambda})=\underset{m \to \infty}\lim\|g_m\|_{\mathfrak{M}_\Pe}$. Evidently, $\|g\|_{\infty, \Pe} \leq \|g\|_{\mathfrak{M}_\Pe} \leq \|f\|_{\infty, W}$ for every $g \in SA_f(\Pe)$. Therefore, one can find a subsequence $\{g_{m_k}\}$ of $\{g_m\}$ which converges pointwise to $g \in H^\infty(\Pe)$. For every $\underline{T}$ in $\mathfrak{M}_\Pe$, an application of dominated convergence theorem gives that $g_{m_k}(\underline{T})$ converges to $g(\underline{T})$ in the weak-operator topology. Consequently, $\|g(\underline{T})\| \leq \|f\|_{\infty, W}$ for all $\underline{T} \in \mathfrak{M}_\Pe$ and $g(z_i)=\lm_i$ for $1 \leq i \leq n$. By definition of $\rho_f(\textbf{z}, \boldsymbol{\lambda})$, it follows that $\rho_f(\textbf{z}, \boldsymbol{\lambda}) \leq \|g\|_{\mathfrak{M}_\Pe}$. Let $\underline{T}$ be a commuting triple of operators acting on a Hilbert space $\mathcal{K}$, and let $\underline{T} \in \mathfrak{M}_\Pe$. Then 
for every $x, y \in \mathcal{K}$,
\[
|\langle g(\underline{T})x, y\rangle |=\lim_{k \to \infty}|\langle g_{m_k}(\underline{T})x, y\rangle | \leq \lim_{k \to \infty}\|g_{m_k}(\underline{T})\| \|x\| \| y\| \leq \lim_{k \to \infty}\|g_{m_k}\|_{\mathfrak{M}_\Pe}\|x\|\|y\|=\rho_f(\textbf{z}, \boldsymbol{\lambda}) \|x\| \|y\|.
\]
Therefore, $\|g(\underline{T})\| \leq \rho_f(\textbf{z}, \boldsymbol{\lambda})$ for all $\underline{T} \in \mathfrak{M}_\Pe$ and so, $\|g\|_{\mathfrak{M}_\Pe} \leq \rho_f(\textbf{z}, \boldsymbol{\lambda})$. The proof is complete.
\end{proof}

We now prove the following lemma, which is the final ingredient in the proof of Theorem \ref{thm_ext_P}.

\begin{lem}\label{lem_305}
	Let $W$ be a subset of $\Pe$ and let $f \in \mathscr{HE}(W)$ be non-zero. If $g \in SA_f(\Pe)$ is an extremal function for the data $\textbf{z}=\{z_1, \dotsc, z_n\} \subseteq W$ and $\boldsymbol{\lambda}=(\lambda_1, \dotsc, \lambda_n) \in \C^n$, then there exists $\underline{S} \in Q\Pe$ subordinate to $\textbf{z}$ such that $\|g(\underline{S})\|=\rho_f(\textbf{z}, \boldsymbol{\lambda})$.
\end{lem}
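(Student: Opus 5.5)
Set $\rho=\rho_f(\textbf{z},\boldsymbol{\lambda})$. If $\rho=0$, then all $\lambda_i=0$ and any triple subordinate to $\textbf{z}$ works, e.g.\ $\underline{S}=(z_1^{(1)},z_1^{(2)},z_1^{(3)})$ acting on $\C$. So assume $\rho>0$. The plan is to build $\underline{S}$ as the adjoint of the coordinate multiplication triple on the $n$-dimensional space $\HS(k)$, where $k\in K_{\textbf{z}}$ is a kernel at which the Pick-type matrix for the rescaled data degenerates.

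First I would use extremality together with Theorem \ref{thm_int_P_II}. For $0<t<\rho$ there is no $h\in SA(\Pe)$ with $h(z_i)=\lambda_i/t$. Indeed, otherwise $th\in SA_f(\Pe)$, because $t<\rho\le\|f\|_{\infty,W}$. It also interpolates the data and has $\|th\|_{\mathfrak{M}_\Pe}\le t<\rho$, which contradicts the definition of $\rho$. If some $|\lambda_i|/t>1$, Theorem \ref{thm_int_P_II} does not apply directly. In that case I would instead use a point $\underline{S}=z_iI$ on $\C$: this is subordinate to $\textbf{z}$, and since $|\lambda_i|\le\rho$ forces $|\lambda_i|=\rho$ in the limit, $g(\underline{S})=\lambda_i$ has modulus $\rho$. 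So assume $|\lambda_i|<\rho$ for all $i$, and take $t$ close to $\rho$. Theorem \ref{thm_int_P_II} then gives $k_t\in K_{\textbf{z}}$ and a vector $c(t)$ with $\|c(t)\|=1$ such that
\[
\sum_{i,j}\overline{c_i}c_j\bigl(1-\lambda_i\overline{\lambda_j}/t^2\bigr)k_t(i,j)<0 .
\]
Compactness of $K_{\textbf{z}}$, proved just before Theorem \ref{thm_int_P_II}, lets us take $t_m\uparrow\rho$ with $k_{t_m}\to k\in K_{\textbf{z}}$ and $c(t_m)\to c\neq0$. The matrix $[(1-\lambda_i\overline{\lambda_j}/\rho^2)k(i,j)]$ is also positive semidefinite, because $g/\rho\in SA(\Pe)$ interpolates $\lambda_i/\rho$ and Theorem \ref{thm_int_P_II} applies. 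Hence this matrix is PSD but singular, with $\sum\overline{c_i}c_j(\rho^2-\lambda_i\overline{\lambda_j})k(i,j)=0$.

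Next I would construct $\underline{S}$. Let $\HS(k)=\C^n$ with Gram matrix $k$, which is strictly positive definite, so the vectors $k_i$ are linearly independent. Define $T_\ell^*k_i=\overline{z_i^{(\ell)}}k_i$ and set $\underline{S}=(T_1,T_2,T_3)$. The three conditions \eqref{eqn_301} translate exactly into $\|T_2\|\le2$, $\|\Phi_\alpha(T_2^*,T_3^*)\|\le1$ and $\|\psi_\alpha(T_1^*,T_2^*,T_3^*)\|\le1$, as in the proof of $(1)\Rightarrow(2)$ of Theorem \ref{thm_realization_P}. Note that the $\psi_\alpha$ uses the conjugated $\alpha$, which is harmless since we range over all $\alpha\in\DC$. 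By the adjoint symmetry these bounds pass to $\underline{S}$. The triple $\underline{S}^*$ is diagonalizable with joint eigenvalues $\bar z_i$, so $\underline{S}$ is simultaneously diagonalizable with $\sigma_T(\underline{S})=\textbf{z}\subseteq\Pe$. For any $h$ holomorphic near $\textbf{z}$ we get $h(\underline{S})^*k_i=\overline{h(z_i)}k_i$, so $h|_{\textbf{z}}=0$ gives $h(\underline{S})=0$. Thus $\underline{S}\in Q\Pe$ and $\underline{S}$ is subordinate to $\textbf{z}$.

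Finally, $g(\underline{S})^*k_i=\overline{\lambda_i}k_i$. For $x=\sum\overline{c_i}k_i$ this gives $\rho^2\|x\|^2-\|g(\underline{S})^*x\|^2=\sum c_i\overline{c_j}(\rho^2-\lambda_j\overline{\lambda_i})k(j,i)=0$. The indexing and conjugation here must be matched carefully, but the computation amounts to the degenerate quadratic form above. Since $x\ne0$, this gives $\|g(\underline{S})\|\ge\rho$. The reverse inequality $\|g(\underline{S})\|\le\rho$ follows as in the preceding necessity lemma, by applying $g(r\cdot\underline{S})$ with $r\cdot\underline{S}\in\mathfrak{M}_\Pe$. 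The main obstacle is the limiting step: producing a singular direction at $t=\rho$ while keeping $k$ in $K_{\textbf{z}}$. That is exactly where the compactness of $K_{\textbf{z}}$, including strict positivity, is needed, so that $\HS(k)$ is genuinely $n$-dimensional with independent kernel vectors.
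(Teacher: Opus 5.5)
Your proposal is correct and follows essentially the paper's route: extremality, together with Theorem~\ref{thm_int_P_II} and the compactness of $K_{\textbf{z}}$, yields a strictly positive $\boldsymbol{\kappa}\in K_{\textbf{z}}$ for which $\bigl[(\rho^2-\lambda_i\overline{\lambda}_j)\boldsymbol{\kappa}(i,j)\bigr]$ is positive semi-definite but singular, and $\underline{S}$ is the adjoint of the diagonal triple $\boldsymbol{\kappa}(\cdot,j)\mapsto\overline{z}_j^{(\ell)}\boldsymbol{\kappa}(\cdot,j)$ on the $n$-dimensional column space. The differences are only cosmetic: you pass to a limit of negative witnesses $(k_{t_m},c(t_m))$ with $t_m\uparrow\rho$ instead of the paper's minimum-eigenvalue argument based on $\rho=\inf\Lambda$ (your separate case $|\lambda_i|=\rho$ is harmless but unnecessary, since $k(i,i)=1$ already makes a diagonal entry negative when $|\lambda_i|>t$), and you obtain $\|g(\underline{S})\|\le\rho$ from $r\cdot\underline{S}\in\mathfrak{M}_\Pe$, whereas the paper reads it off directly from the positivity of that matrix.
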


\begin{proof}
Suppose $g \in SA_f(\Pe)$ is an extremal function for the data $\textbf{z}=\{z_1, \dotsc, z_n\} \subseteq W$ and $\boldsymbol{\lambda}=(\lambda_1, \dotsc, \lambda_n) \in \C^n$.	For the sake of brevity, let $\rho=\rho_f(\textbf{z}, \boldsymbol{\lambda})$. If $\rho=0$, then $\|g\|_{\infty, \Pe} \leq \|g\|_{\mathfrak{M}_\Pe}=\rho=0$ and so, $g=0$. In this case, one can choose $\underline{S}=(z_1^{(1)}I, z_1^{(2)}I, z_1^{(3)}I)$ on any Hilbert space $\mathcal{K}$. Evidently, $\underline{S} \in Q\Pe$ and $\underline{S}$ is subordinate to $\textbf{z}$ with $\|g(\underline{S})\|=\rho=0$.

\smallskip 

Let us assume that $\rho>0$. Since $\|g(\underline{T})\| \leq \|g\|_{\mathfrak{M}_\Pe} = \rho$ for every $\underline{T} \in \mathfrak{M}_\Pe$, it follows that $\frac{1}{\rho}g \in SA(\Pe)$ and $\frac{1}{\rho}g(z_i)=\lm_i\slash \rho$ for $1\leq i \leq n$. We have by Theorem \ref{thm_int_P_II} that $\left[(\rho^2-\lm_i\overline{\lm}_j)k(i, j)\right]_{i, j=1}^n \geq 0$ for all $k \in K_\textbf{z}$. Consider the set 
\[
\Lambda=\left\{\lambda : 0 < \lambda \leq \|f\|_{\infty, W} \ \ \text{and} \ \ \left[(\lambda^2-\lm_i\overline{\lm}_j)k(i, j)\right]_{i, j=1}^n \geq 0 \ \text{for all} \ k \in K_\textbf{z} \right\}.
\]
Clearly, $\rho \in \Lambda$. Let $\lambda \in \Lambda$. Note that $\lambda_i \slash \lambda \in \DC$ since $(\lambda^2 - |\lambda_i|^2)k(i,i) \geq 0$ for $k \in K_{\mathbf{z}}$ and $1 \leq i \leq n$. By Theorem \ref{thm_int_P_II}, there exists $h_\lambda \in SA(\Pe)$ such that $h_\lm(z_i)=\lm_i\slash \lm$ for $1 \leq i \leq n$. Define $f_\lm=\lm h_\lm$. Then $\|f_\lm(\underline{T})\|=\lm \|h_\lm(\underline{T})\| \leq \lm \leq \|f\|_{\infty, W}$ for all $\underline{T} \in \mathfrak{M}_\Pe$ and thus, $\|f_\lm\|_{\mathfrak{M}_\Pe} \leq \lm$. Consequently, $f_\lm \in SA_f(\Pe)$ and $f_\lm(z_i)=\lm_i$ for $1 \leq i \leq n$. By definition of $\rho_f(\textbf{z}, \boldsymbol{\lambda})$, it follows that $\rho \leq \|f_\lm\|_{\mathfrak{M}_\Pe} \leq \lambda$. So, $\rho \leq \lambda$ for every $\lambda \in \Lambda$. Since $\rho \in \lm$, we have that $\rho=\inf \Lambda$. We claim that there exists a matrix $\boldsymbol{\kappa} \in K_\textbf{z}$ and a non-zero vector $y=(y_1, \dotsc, y_n)^t$ such that
\begin{equation}\label{eqn_302}
\overset{n}{\underset{i, j=1}{\sum}}(\rho^2-\lm_i\overline{\lm}_j)\boldsymbol{\kappa}(i, j)\overline{y}_iy_j=0.
\end{equation}
For $k \in K_{\textbf{z}}$, let the minimum eigenvalue of $[(\rho^2-\lm_i\overline{\lm}_j)k(i, j)]_{i, j=1}^n$ be $\mu(k)$. Let $\mu=\inf\{\mu(k): k \in K_\textbf{z}\}$. If $\mu=0$, then the claim holds trivially as $k \mapsto \mu(k)$ is a continuous map and $K_\textbf{z}$ is compact. Suppose $\mu>0$. Let $\delta=\sup\{\|k\|: k \in K_\textbf{z}\}$, which is finite as $K_\textbf{z}$ is compact. Then
\[
\left\langle \left[(\rho^2-\lm_i\overline{\lm}_j)k(i, j)\right]_{i, j=1}^n y, y \right\rangle \geq \mu(k)\|y\|^2 \geq \mu \|y\|^2
\]
for every $y \in \C^n$ and $k \in K_\textbf{z}$. For $0<\epsilon< \mu \slash \delta$, a routine computation gives that 
\[
\left[(\rho^2-\epsilon-\lm_i\overline{\lm}_j)k(i, j)\right]_{i, j=1}^n \geq 0
\]
for every $k \in K_\textbf{z}$, contradicting the fact that $\rho=\inf \Lambda$. Hence, $\mu=0$, and the above claim holds for some $\boldsymbol{\kappa} \in K_\textbf{z}$. Since $\boldsymbol{\kappa}=[\boldsymbol{\kappa}(i, j)]_{i, j=1}^n$ is strictly positive, the column space $\mathcal{K}_n=\text{span}\{\boldsymbol{\kappa}(., j) : 1\leq j \leq n \}$ is precisely $n$-dimensional. Consider the operators on $\mathcal{K}_n$ given by
\[
S_1^*\boldsymbol{\kappa}(., j)=\overline{z}_j^{(1)}\boldsymbol{\kappa}(., j), \quad S_2^*\boldsymbol{\kappa}(., j)=\overline{z}_j^{(2)}\boldsymbol{\kappa}(., j) \quad \text{and} \quad S_3^*\boldsymbol{\kappa}(., j)=\overline{z}_j^{(3)}\boldsymbol{\kappa}(., j) \qquad (1 \leq j \leq n).
\]
Evidently, $(S_1^*, S_2^*, S_3^*)$ is a commuting triple of operators with $\sigma_T(S_1^*, S_2^*, S_3^*)=\{(\overline{z}_j^{(1)}, \overline{z}_j^{(2)}, \overline{z}_j^{(3)}) : 1\leq j \leq n\}$, which implies that $\underline{S}=(S_1, S_2, S_3)$ is subordinate to $\textbf{z}$. In fact, $\underline{S}$ and $\underline{S}^*=(S_1^*, S_2^*, S_3^*)$ belong to $Q\Pe$. Also, $g(\underline{S})^*\boldsymbol{\kappa}(., j)=\overline{g(z_j)}\boldsymbol{\kappa}(., j)=\overline{\lm}_j\boldsymbol{\kappa}(., j)$ for $1 \leq j \leq n$. Since $\left[(\rho^2-\lm_i\overline{\lm}_j)\boldsymbol{\kappa}(i, j)\right] \geq 0$, it follows that $\|g(\underline{S})\|=\|g(\underline{S})^*\| \leq \rho$. The equality $\|g(\underline{S})\|=\rho$ follows directly from \eqref{eqn_302}.
\end{proof}

With these preparations, the proof of the extension theorem follows immediately.

\medskip
 
\noindent \textit{Proof of Theorem \ref{thm_ext_P}:} Let $W \subseteq \Pe$. Suppose $f \in \mathscr{HE}(W)$ is a non-zero function such that 
\begin{equation}\label{eqn_303}
\|f(\underline{T})\| \leq \|f\|_{\infty, W} \quad \text{for every $\underline{T} \in Q\Pe$ subordinate to $W$}.	
\end{equation}
 Choose a dense subset $\{z_1, z_2, \dotsc\}$ of $W$. Let $\textbf{z}_n=\{z_1, \dotsc, z_n\}$ and let $\boldsymbol{\lambda}_n=(f(z_1), \dotsc, f(z_n))$. It follows from Lemma \ref{lem_304} that there exists an extremal function $g_n \in SA_f(\Pe)$ such that $\rho_f(\textbf{z}_n, \boldsymbol{\lambda}_n)=\|g_n\|_{\mathfrak{M}_\Pe}$ for each $n$. By Lemma \ref{lem_305}, there exists $\underline{S}_n=(S_n^{(1)}, S_n^{(2)}, S_n^{(3)}) \in Q\Pe$ subordinate to $\textbf{z}_n$ such that $\|g_n(\underline{S}_n)\|=\rho_f(\textbf{z}_n, \boldsymbol{\lambda}_n)$. Consequently, 
\begin{align*}
\|g_n\|_{\mathfrak{M}_\Pe}
=\rho_f(\textbf{z}_n, \boldsymbol{\lambda}_n)
&=\|g_n(\underline{S}_n)\|  \\
&=\|f(\underline{S}_n)\| \quad [\text{since $\underline{S}_n$ is subordinate to $\textbf{z}$ and $g_n=f$ on $\textbf{z}$}]\\
&\leq \|f\|_{\infty, W} \quad [\text{$\underline{S}_n$ is subordinate to $W$ since $\textbf{z} \subseteq W$, and $f$ satisfies \eqref{eqn_303}}].
\end{align*}
Since $\|g_n\|_{\infty, \Pe} \leq \|g_n\|_{\mathfrak{M}_\Pe} \leq \|f\|_{\infty, W}$, the sequence $\{g_n\}$ is uniformly bounded. By Montel's theorem, one can find a subsequence $\{g_{n_k}\}$ of $\{g_n\}$ that converges pointwise to a function $g \in H^\infty(\Pe)$. Note that  $f(z_i)=g(z_i)$ for $i \in \N$. Therefore, $f=g$ on $W$ and so, $\|f\|_{\infty, W}=\|g\|_{\infty, W} \leq \|g\|_{\infty, \Pe}$. For every $\underline{T}$ in $\mathfrak{M}_\Pe$, an application of dominated convergence theorem gives that $g_{n_k}(\underline{T})$ converges to $g(\underline{T})$ in the weak-operator topology. Thus, $\|g(\underline{T})\| \leq \|f\|_{\infty, W}$ and so, $\|g\|_{\infty, \Pe} \leq \|f\|_{\infty, W}$. \qed

\medskip 

We say that a subset $W$ of $\Pe$ has the \textit{extension property} in $SA(\Pe)$ if for every non-zero $f \in \mathscr{HE}(W)$, there exists $g \in H^\infty(\Pe)$ such that $ g\slash \|f\|_{\infty, W} \in SA(\Pe), g|_W=f$ and $\|f\|_{\infty, W}=\|g\|_{\infty, \Pe}$.	We present below examples of subsets of $\Pe$ having the extension property. In fact, we show that the domains $\G_2$ and $\D^2$, when holomorphically embedded in $\Pe$, admit the extension property in $SA(\Pe)$. 
	
\begin{eg}\label{eg_001}
	Consider the symmetrized bidisc $\G_2=\{(z^{(1)}+z^{(2)}, z^{(1)}z^{(2)}): z^{(1)}, z^{(2)} \in \D \}$ and the subset $W=\{(0, w^{(2)}, w^{(3)}) : (w^{(2)}, w^{(3)}) \in \G_2\}$. By Theorem \ref{thm_connect_P}, $W$ is a subset of $\Pe$. Let $f \in \mathscr{HE}(W)$ be non-zero. Consider the maps $g: \Pe \to \C$ and $h: \G_2 \to \C$ given by
	$
g(w^{(1)}, w^{(2)}, w^{(3)})=f(0, w^{(2)}, w^{(3)})$ and $h(w^{(2)}, w^{(3)})=f(0, w^{(2)}, w^{(3)})$. Since $f \in \mathscr{HE}(W)$, it extends to a holomorphic function $F$ on a neighbourhood of $W$. Thus, $g(w^{(1)},w^{(2)},w^{(3)})=F(0,w^{(2)},w^{(3)})$ and $h(w^{(2)},w^{(3)})=F(0,w^{(2)},w^{(3)})$ are compositions of $F$ with holomorphic coordinate maps, and hence are holomorphic. Also, $g|_W=f$ and $\|g\|_{\infty, \Pe}=\|f\|_{\infty, W}$. Let $\underline{T}=(T_1, T_2, T_3) \in \mathfrak{M}_\Pe$. Then $(T_2, T_3) \in \mathfrak{M}_{\G_2}$ and $(T_2, T_3)$ has $\Gamma$ as a spectral set (see the discussion at the beginning of Section \ref{sec_penta}). For $r \in (0, 1)$, set $h_r(w^{(2)}, w^{(3)})=h(rw^{(2)}, r^2w^{(3)})$ for $(w^{(2)}, w^{(3)})\in \Gamma$. Clearly, $h_r$ is holomorphic in a neighbourhood of $\Gamma$. Then $
		\|g(rT_1, rT_2, r^2T_3)\|=\|h(rT_2, r^2T_3)\| =\|h_r(T_2, T_3)\|\leq \|h_r\|_{\infty, \Gamma} \leq \|h\|_{\infty, \G_2} \leq \|f\|_{\infty, W}$ 
for every $r \in (0, 1)$. Letting $r\to 1$, $\|g(\underline{T})\| \leq \|f\|_{\infty, W}$ and thus, $\frac{1}{\|f\|_{\infty, W}}g \in SA(\Pe)$. So, $W$ has the extension property in $SA(\Pe)$. \qed 
\end{eg}	

\begin{eg}\label{eg_002}
Let $W=\{(w^{(1)}, 0, w^{(3)}): w^{(1)}, w^{(3)} \in \D\}$. We have by Theorem \ref{thm_connect_P} that $W \subseteq \Pe$.  Let $f \in \mathscr{HE}(W)$ be non-zero. It follows from the definition of $\Pe$ that  $(w^{(1)}, w^{(3)}) \in \D^2$ for every $(w^{(1)}, w^{(2)}, w^{(3)}) \in \Pe$. Consider the maps $g: \Pe \to \C$ and $h: \D^2 \to \C$ given by $g(w^{(1)}, w^{(2)}, w^{(3)})=f(w^{(1)}, 0, w^{(3)})$ and $h(w^{(1)}, w^{(3)})=f(w^{(1)}, 0, w^{(3)})$. It is clear that $g$ and $h$ are holomorphic maps on $\Pe$ and $\D^2$, respectively. Also, $g|_W=f$ and $\|g\|_{\infty, \Pe}=\|f\|_{\infty, W}$. Let $\underline{T}=(T_1, T_2, T_3) \in \mathfrak{M}_\Pe$ be acting on a Hilbert space $\mathcal{K}$. Then
\begin{equation}\label{eqn_P_D2_001}
	\|T_2\|<2, \quad \|(2\al T_3-T_2)(2-\al T_2)^{-1}\|<1 \quad \text{and} \quad \|(1-|\al|^2)T_1(I-\al T_2-\al^2T_3)^{-1}\|<1 
\end{equation}
for all $\al \in \DC$. We have that $\|T_1\|<1$ by putting $\alpha=0$ in the last inequality of \eqref{eqn_P_D2_001}. The second equality in \eqref{eqn_P_D2_001} implies that $\|(2\al T_3-T_2)x\| \leq c_\al \|(2-\al T_2)x\|$ for all $\al \in \DC$ and $x \in \mathcal{K}$, where $c_\al=\|(2\al T_3-T_2)(2-\al T_2)^{-1}\|<1$. For $\alpha=\pm 1$ with $c=\max\{c_1, c_{-1}\}$ and $x \in \mathcal{K}$, we have
\begin{align*}
	2\|2T_3x\|^2+2\|T_2x\|^2
	=\|2T_3x-T_2x\|^2+\|-2T_3x-T_2x\|^2 
	& \leq c_1^2\|2x-T_2x\|^2+c_{-1}^2\|2x+T_2x\|^2\\
	& \leq 2c^2\|2x\|^2+2c^2\|T_2x\|^2 \\
	& \leq 2c^2\|2x\|^2+2\|T_2x\|^2
\end{align*}
and so, $\|T_3x\| \leq c\|x\|$. Consequently, $(T_1, T_3)$ is a commuting pair of strict contractions. For $r \in (0, 1)$, set $h_r(w^{(1)}, w^{(3)})=h(rw^{(1)}, rw^{(3)})$ for $(w^{(1)}, w^{(3)})\in \overline{\D}^2$. Clearly, $h_r$ is holomorphic in a neighbourhood of $\DC^2$. By Ando's theorem \cite{Ando}, $\|g(rT_1, rT_2, rT_3)\|=\|h(rT_1, rT_3)\| =\|h_r(T_1, T_3)\|\leq \|h_r\|_{\infty, \overline{\D}^2} \leq \|h\|_{\infty, \D^2} \leq \|f\|_{\infty, W}$ for every $r \in (0, 1)$. Letting $r\to 1$, we have that $\|g(\underline{T})\| \leq \|f\|_{\infty, W}$ and thus, $\frac{1}{\|f\|_{\infty, W}}g \in SA(\Pe)$. Consequently, $W$ has the extension property in $SA(\Pe)$. \qed 
\end{eg}

We conclude this section with the following characterization of subsets of $\Pe$ that possess the extension property in $SA(\Pe)$. The proof follows directly from Theorem \ref{thm_ext_P}.

\begin{cor}
	A subset $W$ of $\Pe$ has the extension property in $SA(\Pe)$ if and only if $\|f(\underline{T})\| \leq \|f\|_{\infty, W}$ for every $f \in \mathscr{HE}(W)$ and $\underline{T} \in Q\Pe$ subordinate to $W$. 
\end{cor}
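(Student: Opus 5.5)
The corollary should follow by unwinding definitions and applying Theorem \ref{thm_ext_P} to each non-zero $f \in \mathscr{HE}(W)$ separately. The extension property is a quantifier over all non-zero $f \in \mathscr{HE}(W)$. For each fixed non-zero $f$, Theorem \ref{thm_ext_P} states that the existence of the required $g$ is equivalent to $\|f(\underline{T})\| \leq \|f\|_{\infty, W}$ for every $\underline{T} \in Q\Pe$ subordinate to $W$. Taking the conjunction over all non-zero $f$ gives the equivalence restricted to non-zero functions.

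The only extra point is $f=0$, which appears in the corollary's condition but not in the definition of the extension property. For $f=0$ we have $f(\underline{T})=0(\underline{T})=0$: the zero function on a neighbourhood of $W$ is a holomorphic extension, and $f(\underline{T})$ does not depend on which extension is chosen, as verified before Theorem \ref{thm_ext_P}. Hence the inequality $0 \leq 0$ holds trivially, and the condition "for every $f \in \mathscr{HE}(W)$" is equivalent to the same condition over non-zero $f$.

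Assembling the two directions:
\begin{itemize}
\item[($\Rightarrow$)] If $W$ has the extension property, then for every non-zero $f$ the "only if" part of Theorem \ref{thm_ext_P} (the lemma proved just after it) gives the operator inequality, and $f=0$ is trivial.
\item[($\Leftarrow$)] Conversely, if the inequality holds for all $f$, then for each non-zero $f$ the sufficiency direction of Theorem \ref{thm_ext_P} produces the desired $g$.
\end{itemize}

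There is no real obstacle here. The only care needed is the bookkeeping for $f=0$ and noting that $f(\underline{T})$ is well defined for subordinate $\underline{T}$.
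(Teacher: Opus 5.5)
Your proposal is correct and takes the same approach as the paper, whose proof just states that the corollary follows directly from Theorem \ref{thm_ext_P} applied to each non-zero $f \in \mathscr{HE}(W)$. Your explicit handling of $f=0$ is harmless bookkeeping that the paper leaves implicit: via the zero extension $f(\underline{T})=0$, so the inequality holds trivially.
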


	\section{Applications: Realization, interpolation and extension on $\D^2$ and $\G_2$}\label{sec_appl}

	\noindent In this section, we employ the realization, interpolation and extension theorems for the pentablock $\mathbb{P}$ established in Sections \ref{sec_penta} and \ref{sec_ext} to obtain the corresponding results for the bidisc $\mathbb{D}^2$ and the symmetrized bidisc $\mathbb{G}_2$. To do so, we capitalize Theorem \ref{thm_connect_P}, which states the domains $\mathbb{D}^2$ and $\mathbb{G}_2$ can be embedded into $\mathbb{P}$. This shows that the function theory on $\Pe$ naturally brings these problems together and provides a single framework from which the realization, interpolation and extension results on $\D^2$ and $\G_2$ can be derived in a unified way.

	\subsection*{The bidisc case.} Agler and McCarthy \cite{Agler_McCarthyI, Agler_McCarthy} presented a realization theorem for functions in the Schur class $S(\D^2)$, and an interpolation theorem on $\D^2$ with interpolating functions in $S(\D^2)$. The Schur-Agler class of $\D^2$ is given by
	\[
	SA(\D^2)=\{g \in \text{Hol}(\D^2) : \|g(T_1, T_2)\| \leq 1 \ \text{for all commuting pairs $(T_1, T_2)$ of strict contractions}\}.
	\]
	For $\D^2$, the classes $SA(\D^2)$ and $S(\D^2)$ coincide, e.g., see \cite{Agler_McCarthyI, Agler_McCarthy, AglerYoung2017}. Indeed, this fact follows from the Ando’s dilation theorem \cite{Ando}. With the realization and interpolation theorems on $\Pe$ in place, we revisit the corresponding realization and interpolation results for $\D^2$ proved in \cite{Agler_McCarthyI, Agler_McCarthy}. In our approach, these results are obtained through $\Pe$ and the characterizations are formulated in terms of functions belonging to $SA(\Pe)$. By Theorem \ref{thm_connect_P}, $(z^{1)}, z^{(2)}) \in \D^2$ if and only if $(z^{(1)}, 0, z^{(2)}) \in \Pe$. Also, $g \in \text{Hol}(\D^2)$ induces a holomorphic map $g\circ \theta_{\Pe \to \D^2}$ on $\Pe$, where 
	\[
	\theta_{\Pe \to \D^2}: \Pe \to \D^2, (z^{(1)}, z^{(2)}, z^{(3)}) \mapsto (z^{(1)}, z^{(3)}).
	\]
	This gives a way through which we transfer the realization and interpolation theorems on $\D^2$ to the corresponding results on $\Pe$. We present the following realization theorem for functions in $SA(\D^2)$. 
	
	\begin{thm}\label{thm_realization_P_D2}
		For a map $g : \D^2 \to \C$, the following are equivalent: 
		\begin{enumerate}
			\item[$(1)$] $g \in SA(\D^2)$;
			
			\item[$(2)$]   $f=g\circ \theta_{\Pe \to \D^2} \in SA(\Pe)$;
			
			\item[$(3)$] $(1-f(z)\overline{f(w)})k(z, w) \succcurlyeq 0$ for all $k \in AK(\Pe)$;
			\item[$(4)$] there exist $\xi, \nabla \in C(\overline{\D})^+_\Pe$ and $\delta \in \C_\Pe^+$ such that for all $z, w \in \Pe$,
			\[
			1-f(z)\overline{f(w)}=\xi(z, w)(1-\mathrm{J}(z)\overline{\mathrm{J}(w)})+\nabla(z, w)(1-\mathrm{j}(z)\overline{\mathrm{j}(w)})+(1-\frac{z^{(2)}\overline{w}^{(2)}}{4})\delta(z, w),
			\]
			where $\mathrm{J}(z)$ and $\mathrm{j}(z)$ are as in \eqref{eqn_J(z)};
			\item[$(5)$] $f \in UC(\Pe)$.
		\end{enumerate}
	\end{thm}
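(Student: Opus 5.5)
Since Theorem \ref{thm_realization_P} already gives the equivalence of $(2)$, $(3)$, $(4)$ and $(5)$ for the function $f=g\circ\theta_{\Pe\to\D^2}$, the only new content is $(1)\iff(2)$. My plan is to prove this equivalence by passing operator tuples between $\mathfrak{M}_\Pe$ and commuting pairs of strict contractions. I will then splice it with Theorem \ref{thm_realization_P}.

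For $(1)\Rightarrow(2)$ I will follow Example \ref{eg_002}. First, $f$ is holomorphic on $\Pe$, because $\theta_{\Pe\to\D^2}$ maps $\Pe$ into $\D^2$ by the definition of $\Pe$. Next take $\underline{T}=(T_1,T_2,T_3)\in\mathfrak{M}_\Pe$. Setting $\al=0$ in the $\psi_\al$ inequality gives $\|T_1\|<1$. The parallelogram argument with $\al=\pm1$ in the $\Phi_\al$ inequality, as in Example \ref{eg_002}, gives $\|T_3\|<1$. So $(T_1,T_3)$ is a commuting pair of strict contractions.

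It remains to identify $f(\underline{T})$ with $g(T_1,T_3)$. The Taylor functional calculus respects composition with the holomorphic map $\theta_{\Pe\to\D^2}$, and the joint spectrum projects: $\sigma_T(T_1,T_3)=\theta_{\Pe\to\D^2}(\sigma_T(\underline{T}))\subseteq\D^2$. Hence $f(\underline{T})=g(T_1,T_3)$, and $(1)$ gives $\|f(\underline{T})\|\le 1$. If one prefers to avoid composition rules for the functional calculus, one can instead use $g_r(w)=g(rw)$, which is holomorphic near $\DC^2$ and can be approximated by polynomials. This gives $f_r(\underline{T})=g_r(T_1,T_3)$ directly, and one then lets $r\to1$ as in Example \ref{eg_002}. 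Alternatively, Ando's theorem gives $S(\D^2)=SA(\D^2)$, so only $\|g\|_{\infty,\D^2}\le1$ is needed.

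For $(2)\Rightarrow(1)$, take a commuting pair $(T_1,T_2)$ of strict contractions and consider the triple $\underline{S}=(T_1,0,T_2)$. I need to check that $\underline{S}\in\mathfrak{M}_\Pe$.
\begin{itemize}
\item The first condition $\|0\|<2$ is trivial.
\item For the second, $\Phi_\al(0,T_2)=\al T_2$, so $\|\Phi_\al(0,T_2)\|\le\|T_2\|<1$ for all $\al\in\DC$.
\item For the third, $\psi_\al(\underline{S})=(1-|\al|^2)T_1(I+\al^2T_2)^{-1}$ (with the sign convention of the paper's operator definition). Bounding the inverse by its Neumann series gives
\[
\|\psi_\al(\underline{S})\|\le \frac{(1-|\al|^2)\|T_1\|}{1-|\al|^2\|T_2\|}.
\]
Since $\|T_2\|\le 1$, this quantity is at most $\|T_1\|<1$.
\end{itemize}
The same bound holds with either sign choice. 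Thus $\underline{S}\in\mathfrak{M}_\Pe$, and $g(T_1,T_2)=f(\underline{S})$ has norm at most $1$, justified by the composition argument above.

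The main obstacle is the rigorous identification $f(\underline{T})=g(T_1,T_3)$ in the functional calculus. I will handle it by the dilation-by-$r$ and polynomial-approximation route, since for polynomials the identity is immediate. The remaining equivalences of $(2)$ through $(5)$ are then quoted verbatim from Theorem \ref{thm_realization_P}.
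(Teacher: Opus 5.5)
Your proposal is correct and follows essentially the same route as the paper. You reduce to $(1)\iff(2)$ via Theorem \ref{thm_realization_P}, use the argument of Example \ref{eg_002} to show that $(T_1,T_3)$ is a commuting pair of strict contractions for $(1)\Rightarrow(2)$, and verify via the Neumann-series bound that the triple with zero middle entry lies in $\mathfrak{M}_\Pe$ for $(2)\Rightarrow(1)$. Where you differ, it is a simplification in your favour: $(1-|\al|^2)/(1-|\al|^2\|T\|)\le 1$ follows at once for any contraction $T$, so the paper's appeal to Proposition 4.2 of \cite{AglerIV} is unnecessary, and your dilation-and-polynomial-approximation remark makes explicit the identification $f(\underline{T})=g(T_1,T_3)$, which the paper takes for granted.
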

	
	\begin{proof} The equivalence of $(2)$ with $(3)$–$(5)$ follows from Theorem \ref{thm_realization_P}. We prove $(1)\iff(2)$. 
		
		\medskip 
		
		\noindent $(1) \implies (2)$. Let $g \in SA(\D^2)$. Take $\underline{T}=(T_1, T_2, T_3) \in \mathfrak{M}_\Pe$ acting on a Hilbert space $\mathcal{K}$. It follows from Example \ref{eg_002} that $T_1, T_3$ are commuting strict contractions. Consequently, $(T_1, T_3) \in SA(\D^2)$ and $\|g\circ \theta_{\Pe \to \D^2}(\underline{T})\|=\|g(T_1, T_3)\| \leq 1$. Therefore, $g\circ \theta_{\Pe \to \D^2} \in SA(\Pe)$. 
		
		\medskip 
		
		\noindent $(2) \implies (1)$. Suppose $f=g\circ \theta_{\Pe \to \D^2} \in SA(\Pe)$. It follows from Theorem \ref{thm_connect_P} that $g$ is holomorphic on $\D^2$ since $g(z^{(1)}, z^{(3)})=f(z^{(1)}, 0, z^{(3)})$ for all $(z^{(1)}, z^{(3)}) \in \D^2$. Let $(T_1, T_3)$ be a commuting pair of strict contractions. Define $\underline{T}=(T_1, T_2, T_3)$ with $T_2=0$. We show that $\underline{T} \in \mathfrak{M}_\Pe$. Clearly, $\|T_2\|<2$ and $\|\Phi_\al(T_1, 0, T_3)\|=\|\al T_3\|<1$ for all $\al \in \DC$. Fix $\al \in \DC$. Then
		\begin{align}\label{eqn_P_D2_002}
			\|\psi_\al(T_1, 0, T_3)\|
			=(1-|\al|^2)\|T_1(I-\al^2T_3)^{-1}\|
			\leq (1-|\al|^2)\|T_1\| \overset{\infty}{\underset{m=0}{\sum}}\|\al^2 T_3\|^m
			=\|T_1\|\frac{1-|\al|^2}{1-|\al|^2\|T_3\|}.
		\end{align}
		It was proved in Proposition 4.2 of \cite{AglerIV} that for $(z^{(2)}, z^{(3)}) \in \G_2$, 
		\begin{equation}\label{eqn_P_D2_003}
			\sup_{\lm \in \DC}\frac{1-|\lm|^2}{|1-z^{(2)}\lm+z^{(3)}\lm^2|}=\frac{1-|\lm_0|^2}{|1-z^{(2)}\lm_0+z^{(3)}\lm_0^2|},
		\end{equation}
		where $\lm_0=\overline{\beta}(1+\sqrt{1-|\beta|^2})^{-1}$ and $\beta=(z^{(2)}-\overline{z}^{(2)} z^{(3)})(1-|z^{(3)}|^2)^{-1}$. In particular, choose $\lm=i|\al|$ and $(z^{(2)}, z^{(3)})=(0, \|T_3\|) \in \G_2$. For this choice of $(z^{(2)}, z^{(3)})$, we have that $\lambda_0=0$. Then by \eqref{eqn_P_D2_002} and \eqref{eqn_P_D2_003}, it follows that
		$\displaystyle 
		\|\psi_\al(T_1, 0, T_3)\| \leq \|T_1\| \frac{1-|\al|^2}{1-|\al|^2\|T_3\|} \leq \|T_1\| < 1 
		$
		and so, $\underline{T} \in \mathfrak{M}_\Pe$. Thus, $\|g(T_1, T_3)\|=\|g\circ \theta_{\Pe \to \D^2}(T_1, 0, T_3)\| \leq 1$ and $g \in SA(\D^2)$.
	\end{proof}
	
	As an application of Theorems \ref{thm_interpolation_P} and \ref{thm_realization_P_D2}, we have the following interpolation theorem on $\D^2$.
	
	\begin{thm}\label{thm_interpolation_P_D2}
		Let $F=\{z_1, \dotsc, z_n\} \subseteq \D^2$ and let $\lm_1, \dotsc, \lm_n \in \DC$. Then the following are equivalent:
		\begin{enumerate}[leftmargin=*]
			\item[$(1)$] there exists $g \in SA(\D^2)$ such that $g(z_j)=\lm_j$ for $1 \leq j \leq n$;
			
			\item[$(2)$] there exists $f \in SA(\Pe)$ such that $f(z_j^{(1)}, 0, z_j^{(2)})=\lm_j$ for $z_j=(z_j^{(1)}, z_j^{(2)})$ with $1 \leq j \leq n$; \smallskip 
			
			\item[$(3)$] $\begin{bmatrix} (1-\lm_i\overline{\lm_j})k\left((z_i^{(1)}, 0, z_i^{(2)}), (z_j^{(1)}, 0, z_j^{(2)})\right)\end{bmatrix}_{i, j=1}^n \geq 0$ for all $k \in AK(\Pe)$; \smallskip 
			
			\item[$(4)$] there exist $\xi, \nabla \in C(\DC)^+_F$ such that for $w_i=(z_i^{(1)}, 0, z_i^{(2)})$ and $1 \leq i, j \leq  n$, we have
			\[
			1-\lambda_i\overline{\lambda}_j=\xi(w_i, w_j)(1-\mathrm{J}(w_i)\overline{\mathrm{J}(w_j)})+\nabla(w_i, w_j)(1-\mathrm{j}(w_i)\overline{\mathrm{j}(w_j)}),
			\]
	where the maps $z\mapsto \mathrm{J}(z)$ and $z\mapsto \mathrm{j}(z)$ are as in \eqref{eqn_J(z)}.
		\end{enumerate}
In the case when $(1)$ holds, one can choose $f=g\circ \theta_{\Pe \to \D^2}$ in $(2)$.
	\end{thm}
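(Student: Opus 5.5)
The plan is to chain the equivalences through the pentablock results, using Theorem \ref{thm_connect_P} to pass between the interpolation nodes $z_j\in\D^2$ and the nodes $w_j=(z_j^{(1)},0,z_j^{(2)})\in\Pe$. Concretely, I would prove $(1)\iff(2)$ directly, obtain $(2)\iff(3)$ from Theorem \ref{thm_interpolation_P}, and handle $(3)\iff(4)$ by comparing with part $(3)$ of Theorem \ref{thm_interpolation_P}.

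For $(1)\implies(2)$, take $f=g\circ\theta_{\Pe\to\D^2}$. By Theorem \ref{thm_realization_P_D2} we have $f\in SA(\Pe)$, and $f(w_j)=g(z_j)=\lm_j$; this also gives the final remark of the statement. For $(2)\implies(1)$, set $g(z^{(1)},z^{(2)})=f(z^{(1)},0,z^{(2)})$. This is holomorphic on $\D^2$ by Theorem \ref{thm_connect_P}, and it interpolates. Note that $g\circ\theta_{\Pe\to\D^2}\neq f$ in general, so I cannot just quote Theorem \ref{thm_realization_P_D2}. Instead I reuse its $(2)\implies(1)$ argument: for commuting strict contractions $(T_1,T_3)$, that proof shows $(T_1,0,T_3)\in\mathfrak{M}_\Pe$. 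Since holomorphic functional calculus is compatible with composition by the holomorphic embedding $(z^{(1)},z^{(2)})\mapsto(z^{(1)},0,z^{(2)})$, we get $g(T_1,T_3)=f(T_1,0,T_3)$, so $\|g(T_1,T_3)\|\le 1$. Hence $g\in SA(\D^2)$.

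Applying Theorem \ref{thm_interpolation_P} to the distinct points $w_1,\dots,w_n\in\Pe$ gives $(2)\iff(3)$. It also shows that both are equivalent to the existence of $\xi,\nabla\in C(\DC)^+_F$ and $\delta\in\C^+_F$ satisfying the representation in part $(3)$ of that theorem. Since $w_i^{(2)}=0$, the last term there is simply $\delta(w_i,w_j)$. Hence $(4)\implies(3)$ is immediate by taking $\delta=0$.

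The main obstacle is $(3)\implies(4)$, where the positive semidefinite term $\delta$ must be absorbed into the $\xi$-term. I would fix $\al_0\in\DC$ (say $\al_0=0$) and put $a_i=\mathrm{J}(w_i)(\al_0)$, so that $|a_i|<1$. Define
\[
\xi'(w_i,w_j)(h)=\frac{\delta(w_i,w_j)}{1-a_i\overline{a_j}}\,h(\al_0),\qquad h\in C(\DC).
\]
This $\xi'$ is a positive kernel with values in $C(\DC)^*$. Indeed, $\sum\overline{c}_ic_j\xi'(w_i,w_j)(f_i\overline{f}_j)$ is the quadratic form of the Schur product of the positive semidefinite matrix $[\delta(w_i,w_j)]$ with the Szeg\H{o}-type matrix $[(1-a_i\overline{a_j})^{-1}]\ge 0$, evaluated at the vector $(c_if_i(\al_0))_i$. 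By construction $\xi'(w_i,w_j)(1-\mathrm{J}(w_i)\overline{\mathrm{J}(w_j)})=\delta(w_i,w_j)$. Replacing $\xi$ by $\xi+\xi'$ then yields the representation in $(4)$ without the $\delta$ term.
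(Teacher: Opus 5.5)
Your proposal is correct and follows essentially the same route as the paper: $(1)\Leftrightarrow(2)$ goes through the embedding $(z^{(1)},z^{(2)})\mapsto(z^{(1)},0,z^{(2)})$ together with $(T_1,0,T_3)\in\mathfrak{M}_\Pe$, and $(2)\Leftrightarrow(3)\Leftrightarrow(4)$ goes through Theorem \ref{thm_interpolation_P}. The one addition is your explicit absorption of the extra term $\delta(w_i,w_j)$, which Theorem \ref{thm_interpolation_P} does produce since $w_i^{(2)}=0$; the paper passes over this silently, and your Schur-product argument correctly fills the gap. A simpler alternative: since $\mathrm{j}(w)(0)=-w^{(2)}/2=0$ at these nodes, you can absorb $\delta$ into $\nabla$ via $h\mapsto\delta(w_i,w_j)h(0)$.
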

	
	\begin{proof}
		The part $(1) \implies (2)$ follows directly from Theorem \ref{thm_realization_P_D2} by choosing $f=g\circ \theta_{\Pe \to \D^2}$. Moreover, the implications $(2) \implies (3) \implies (4) \implies (2)$ follow from Theorem \ref{thm_interpolation_P}. We now prove $(2) \implies (1)$. Suppose there exists $f \in SA(\Pe)$ such that $f(z_j^{(1)}, 0, z_j^{(2)})=\lm_j$ for $1 \leq j \leq n$. Define $g: \D^2 \to \C$ as $g(z^{(1)}, z^{(2)})=f(z^{(1)}, 0, z^{(2)})$. By Theorem \ref{thm_connect_P}, $g \in \text{Hol}(\D^2)$. For commuting pair $(T_1, T_3)$ of strict contractions, it follows from the proof of Theorem \ref{thm_realization_P_D2} that  $(T_1, 0, T_3) \in \mathfrak{M}_\Pe$ and so, $\|g(T_1, T_3)\|=\|f(T_1, 0, T_3)\| \leq 1$. Thus, $g \in SA(\D^2)$ with each $g(z_j)=\lambda_j$.
	\end{proof}

	Let $V$ be a subset of $\D^2$. A commuting pair $(A, B)$ of contractions is said to be \textit{subordinate} to $V$ if $\sigma_T(A, B) \subset V$, and $g(A, B)=0$ whenever $g$ is holomorphic in a neighbourhood of $V$ and $g|_V=0$. If $f$ is a function on $V$ having a holomorphic extension in a neighbourhood of $V$ and $(A, B)$ is subordinate to $V$, then define $f(A, B)=g(A, B)$, where $g$ is any holomorphic extension of $f$ in a neighbourhood of $W$. The following extension theorem on $\D^2$ was proved by Agler and McCarthy in \cite{Agler_McCarthy_2003}. We provide an alternative proof here based on Theorem \ref{thm_ext_P}.

\begin{thm}
	Let $V \subseteq \D^2$ and let $h \in \mathscr{HE}(V)$. Then there is a bounded holomorphic function $\widetilde{h}$ on $\D^2$ such that $\widetilde{h}|_V=h$ and $\|h\|_{\infty, V}=\|\widetilde{h}\|_{\infty, \D^2}$ if and only if $\|h(A, B)\| \leq \|h\|_{\infty, V}$ for every commuting pair of contractions $(A, B)$ subordinate to $V$.
\end{thm}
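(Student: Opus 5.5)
The plan is to transport the problem into $\Pe$ through the embedding $\iota:\D^2\to\Pe$, $\iota(z^{(1)},z^{(2)})=(z^{(1)},0,z^{(2)})$, whose image lies in $\Pe$ by Theorem \ref{thm_connect_P}, and then invoke Theorem \ref{thm_ext_P}. Put $W=\iota(V)\subseteq\Pe$ and $f=h\circ\theta_{\Pe\to\D^2}|_W$, so that $f(z^{(1)},0,z^{(2)})=h(z^{(1)},z^{(2)})$. If $H$ is a holomorphic extension of $h$ to a neighbourhood $U$ of $V$, then $H\circ\theta_{\Pe\to\D^2}$ is holomorphic on the neighbourhood $\theta_{\Pe\to\D^2}^{-1}(U)$ of $W$. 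Hence $f\in\mathscr{HE}(W)$ and $\|f\|_{\infty,W}=\|h\|_{\infty,V}$. The case $h=0$ is trivial, so we assume $h\neq 0$.

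For necessity, suppose $\widetilde h$ is a norm-preserving extension. By Ando's theorem $S(\D^2)=SA(\D^2)$, so $\widetilde h/\|h\|_{\infty,V}\in SA(\D^2)$. Let $(A,B)$ be a commuting pair of contractions subordinate to $V$. Then $\sigma_T(A,B)\subset V\subset\D^2$, so the pair $(rA,rB)$ consists of strict contractions for $r<1$. Applying Ando's inequality to the dilates, together with the dominated-convergence (Martinelli kernel) argument used in the necessity lemma for $\Pe$, gives $\|\widetilde h(A,B)\|\le\|h\|_{\infty,V}$. Subordination then gives $h(A,B)=\widetilde h(A,B)$.

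For sufficiency, we verify the hypothesis of Theorem \ref{thm_ext_P} for $f$ and $W$. Let $\underline{T}=(T_1,T_2,T_3)\in Q\Pe$ be subordinate to $W$. The key claims are the following.
\begin{enumerate}
\item[(a)] $T_2=0$.
\item[(b)] $(T_1,T_3)$ is a commuting pair of contractions subordinate to $V$.
\end{enumerate}
For (a), the coordinate function $z^{(2)}$ is holomorphic everywhere and vanishes on $W$, so subordination forces $T_2=0$. For (b), the argument of Example \ref{eg_002} with non-strict inequalities gives $\|T_1\|\le1$ (take $\al=0$ in the $\psi_\al$ bound) and $\|T_3\|\le1$ (combine $\al=\pm1$ in the $\Phi_\al$ bound). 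Next, $\sigma_T(T_1,T_3)=\theta_{\Pe\to\D^2}(\sigma_T(\underline{T}))\subset V$ by the projection property. Finally, if $G$ is holomorphic near $V$ and vanishes on $V$, then $G\circ\theta_{\Pe\to\D^2}$ is holomorphic near $W$ and vanishes on $W$, so $G(T_1,T_3)=(G\circ\theta_{\Pe\to\D^2})(\underline{T})=0$. Consequently $\|f(\underline{T})\|=\|h(T_1,T_3)\|\le\|h\|_{\infty,V}=\|f\|_{\infty,W}$.

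Theorem \ref{thm_ext_P} now yields $g\in H^\infty(\Pe)$ with $g|_W=f$ and $\|g\|_{\infty,\Pe}=\|f\|_{\infty,W}$. Define $\widetilde h(z^{(1)},z^{(2)})=g(z^{(1)},0,z^{(2)})$. It is holomorphic on $\D^2$ by Theorem \ref{thm_connect_P}, it restricts to $h$ on $V$, and $\|\widetilde h\|_{\infty,\D^2}\le\|g\|_{\infty,\Pe}=\|h\|_{\infty,V}$. The reverse inequality is automatic, so $\widetilde h$ is the required extension.

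The main obstacle I expect is the functional-calculus compatibility $(G\circ\theta_{\Pe\to\D^2})(\underline{T})=G(T_1,T_3)$ for functions holomorphic only near $W$ (respectively near $V$). I would justify this by the composition and projection properties of the Taylor calculus, applied on the neighbourhood $\theta_{\Pe\to\D^2}^{-1}(U)$. A smaller point to check is that the spectral inclusion $\sigma_T\subset V$ makes the dilation and limit argument in the necessity direction legitimate; there I would use $r\cdot$ dilates exactly as in the necessity lemma preceding Lemma \ref{lem_304}.
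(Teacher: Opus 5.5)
Your proof is correct. The sufficiency direction is essentially the paper's argument. The paper also kills $T_2$ with the coordinate function $z^{(2)}$ and gets $\|T_1\|,\|T_3\|\le 1$ from the computations of Example \ref{eg_002}. It checks subordination of $(T_1,T_3)$ to $V$ by pulling functions back along $\theta_{\Pe\to\D^2}$, and then restricts the extension from Theorem \ref{thm_ext_P} to the slice $z^{(2)}=0$. Your separate treatment of $h\equiv 0$ is a small improvement, since Theorem \ref{thm_ext_P} requires $f\neq 0$.

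The necessity direction is where you genuinely differ. The paper stays inside the pentablock:
\begin{enumerate}
\item it lifts $\widetilde h$ to $\widetilde f$ on $W_0=\{(z^{(1)},0,z^{(3)}):(z^{(1)},z^{(3)})\in\D^2\}$;
\item it combines Example \ref{eg_002} with the easy direction of Theorem \ref{thm_ext_P} to get $\|\widetilde f(\underline T)\|\le\|h\|_{\infty,V}$ for every $\underline T\in Q\Pe$ subordinate to $W_0$;
\item it then verifies that $(A,0,B)\in Q\Pe$, re-running the $\psi_\al$ estimate from the proof of Theorem \ref{thm_realization_P_D2} for non-strict contractions, and that subordination of $(A,B)$ to $V$ lifts to subordination of $(A,0,B)$ to $W$.
\end{enumerate}

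You instead apply Ando's theorem to the dilates $(rA,rB)$ directly on $\D^2$ and pass to the limit via the Martinelli representation over a contour around $\sigma_T(A,B)\subset\D^2$. You then identify $\widetilde h(A,B)=h(A,B)$, because $\widetilde h$ is itself a holomorphic extension of $h$ to the neighbourhood $\D^2$ of $V$.

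Your route is shorter and avoids the membership and subordination checks in $Q\Pe$; it shows this direction needs nothing about $\Pe$. The paper's route serves the section's stated aim of deriving both directions from Theorem \ref{thm_ext_P}. The analytic input is the same Ando theorem in both cases, entering the paper's argument through Example \ref{eg_002}.

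One wording point: the strictness of $(rA,rB)$ comes from $r<1$ and $\|A\|,\|B\|\le 1$, not from the spectral inclusion. The inclusion $\sigma_T(A,B)\subset V$ is what makes $\widetilde h(A,B)$ defined and the limiting argument legitimate.
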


\begin{proof} 
Set $W=\{(z^{(1)}, 0, z^{(3)}): (z^{(1)}, z^{(3)}) \in V\}$ and $W_0=\{(z^{(1)}, 0, z^{(3)}): (z^{(1)}, z^{(3)}) \in \D^2\}$. We have by Theorem \ref{thm_connect_P} that $W \subseteq W_0 \subseteq \Pe$. Define $f: W \to \C$ as $f(z^{(1)}, 0, z^{(3)})=h(z^{(1)}, z^{(3)})$.  Since $h \in \mathscr{HE}(V)$, it extends to a holomorphic map $h_0$ on a neighbourhood $U$ of $V$. Then $F(z^{(1)},z^{(2)},z^{(3)}) = h_0(z^{(1)},z^{(3)})$ is a holomorphic extension of $f$ to the open neighbourhood $\{(z^{(1)}, z^{(2)}, z^{(3)}) \in \C^3: (z^{(1)}, z^{(3)}) \in U\}$ of $W$ such that $F|_W=f$ and so, $f \in \mathscr{HE}(W)$. 
	
	\smallskip 
	
	$(\implies)$ Suppose there is a bounded holomorphic function $\widetilde{h}$ on $\D^2$ such that $\widetilde{h}|_V=h$ and $\|h\|_{\infty, V}=\|\widetilde{h}\|_{\infty, \D^2}$. The map $\widetilde{f}: W_0 \to \C$ given by $\widetilde{f}(z^{(1)}, 0, z^{(3)})=\widetilde{h}(z^{(1)}, z^{(3)})$ has a holomorphic extension on $\Pe$, because $\widetilde{F}(z^{(1)}, z^{(2)}, z^{(3)})= \widetilde{h}(z^{(1)}, z^{(3)})$ is holomorphic on $\Pe$ that satisfies $\widetilde{F}|_{W_0}=\widetilde{f}$. Thus, $\widetilde{f} \in \mathscr{HE}(W_0)$. By Example \ref{eg_002}, $W_0$ has the extension property in $SA(\Pe)$ and by Theorem \ref{thm_ext_P}, 
	\begin{equation}\label{eqn_4003}
		\|\widetilde{f}(\underline{T})\| \leq \|\widetilde{f}\|_{\infty, W_0}\leq \|\widetilde{h}\|_{\infty, \D^2}=\|h\|_{\infty, V}  
	\end{equation}
	for every $\underline{T} \in Q\Pe$ subordinate to $W_0$. Suppose $(T_1, T_3)$ is a commuting pair of contractions subordinate to $V$. Then $\sigma_T(T_1,0, T_3) = \{(z^{(1)}, 0, z^{(3)}) : (z^{(1)}, z^{(3)}) \in \sigma_T(T_1, T_3)\} \subseteq \{(z^{(1)}, 0, z^{(3)}) : (z^{(1)}, z^{(3)}) \in V\} = W$. Following the proof of $(2) \implies (1)$ in Theorem \ref{thm_realization_P_D2}, one can easily show that $(T_1, 0, T_3) \in Q\Pe$. Let $G$ be a holomorphic map on a neighbourhood $U \subseteq \mathbb{C}^3$ of $W$ such that $G|_W = 0$. Define $\xi : \mathbb{C}^2 \to \mathbb{C}^3$ as $\xi(z^{(1)}, z^{(3)}) = (z^{(1)}, 0, z^{(3)})$ and set
	$g(z^{(1)}, z^{(3)})= G(z^{(1)}, 0, z^{(3)})$ for $(z^{(1)}, z^{(3)}) \in \xi^{-1}(U)$.
	Clearly, $g$ is holomorphic in a neighbourhood of $V$. For $(z^{(1)}, z^{(3)}) \in V$, we have that $g(z^{(1)}, z^{(3)}) = G(z^{(1)}, 0, z^{(3)}) = 0$ as $G|_W=0$. Since $(T_1, T_3)$ is subordinate to $V$ and $g|_V=0$, it follows that
	$g(T_1, T_3) = 0$. By holomorphic functional calculus, $G(T_1, 0, T_3) = g(T_1, T_3)= 0$. Thus, $(T_1, 0, T_3)$ is subordinate to $W$ and so, $(T_1, 0, T_3)$ is subordinate to $W_0$ since $W \subseteq W_0$. By \eqref{eqn_4003},
	$
	\|h(T_1, T_3)\|=\|\widetilde{h}(T_1, T_3)\|=\|\widetilde{F}(T_1, 0, T_3)\|=\|\widetilde{f}(T_1, 0, T_3)\| \leq \|h\|_{\infty, V}
	$
	for every commuting pair of contractions $(T_1, T_3)$ subordinate to $V$.
	
	\smallskip 
	
	$(\impliedby)$ Let $\|h(T_1, T_3)\| \le \|h\|_{\infty,V}$ for every commuting pair of contractions $(T_1, T_3)$ subordinate to $V$. Let $\underline{T} = (T_1,T_2,T_3) \in Q\Pe$ be subordinate to $W$. Following similar computations as in Example \ref{eg_002}, we have that $\|T_1\|, \|T_3\| \leq 1$. Also, $\sigma_T(\underline{T}) \subseteq W=\{(z^{(1)}, 0, z^{(3)}): (z^{(1)}, z^{(3)}) \in V\}$.
	Let $G(z^{(1)},z^{(2)},z^{(3)}) = z^{(2)}$. Clearly, $G$ is holomorphic on $\C^3$ and $G|_W = 0$. Since $\underline{T}$ is subordinate to $W$, it follows that $T_2 = G(T_1,T_2,T_3) = 0$ and so, $\underline{T} = (T_1, 0,T_3)$. We show that $(T_1, T_3)$ is subordinate to $V$. By spectral mapping principle, $\sigma_T(T_1, T_3) \subseteq V$. Let $g_0$ be a holomorphic function in a neighbourhood $U$ of $V$ with $g_0|_V=0$. Then the function $G_0(z^{(1)},z^{(2)},z^{(3)})= g_0(z^{(1)},z^{(3)})$ is holomorphic on $\{(z^{(1)}, z^{(2)}, z^{(3)}) \in \C^3: (z^{(1)}, z^{(3)}) \in U\}$, which is a neighbourhood of $W$ and $G_0|_W=0$. Since $\underline{T}$ is subordinate to $W$, we have that $g_0(T_1,T_3) = G_0(T_1, 0, T_3) = G_0(T_1,T_2,T_3) = 0$.
	Hence, $(T_1,T_3)$ is subordinate to $V$. By hypothesis,
	$
	\|f(\underline{T})\| = \|f(T_1, 0, T_3)\| = \|h(T_1,T_3)\| \le \|h\|_{\infty,V} = \|f\|_{\infty,W}.
	$
	By Theorem \ref{thm_ext_P}, there exists $\widehat{F} \in H^\infty(\Pe)$ such that $\widehat{F}|_W = f$ and $\|\widehat{F}\|_{\infty,\Pe} = \|f\|_{\infty,W}$. Define $\widehat{h} : \D^2 \to \C$ by $\widehat{h}(z^{(1)},z^{(3)})= \widehat{F}(z^{(1)}, 0, z^{(3)})$, which is holomorphic on $\D^2$. For $(z^{(1)},z^{(3)}) \in V$, we have that $\widehat{h}(z^{(1)},z^{(3)}) = \widehat{F}(z^{(1)}, 0, z^{(3)}) = f(z^{(1)}, 0, z^{(3)})= h(z^{(1)},z^{(3)})$ and thus, $\widehat{h}|_V = h$. Finally,
	$\|\widehat{h}\|_{\infty,\D^2} \le \|\widehat{F}\|_{\infty,\Pe} = \|f\|_{\infty,W} = \|h\|_{\infty,V}$ and so, $\|\widehat{h}\|_{\infty,\D^2} = \|h\|_{\infty,V}$, which completes the proof.
\end{proof}

\subsection{The symmetrized bidisc case.} We now present the realization and interpolation results on the symmetrized bidisc through the pentablock framework. The authors of \cite{AglerYoung2017, Tirtha_Sau} established a realization theorem for the Schur class $S(\G_2)$, and an interpolation theorem on $\G_2$ with interpolating functions belonging to $S(\G_2)$. As mentioned earlier, the Schur class $S(\G_2)$ and Schur-Agler class $SA(\G_2)$ coincides with each other. Building on the realization and interpolation theorems on $\Pe$, we obtain realization and interpolation theorems on $\G_2$ with interpolating functions belonging to $SA(\G_2)$. Our characterizations are formulated in terms of functions in $SA(\Pe)$, thereby providing an alternative criterion for these results on $\G_2$ to those obtained in \cite{AglerYoung2017, Tirtha_Sau}. To do so, we recall from Theorem \ref{thm_connect_P} that $(z^{1)}, z^{(2)}) \in \G_2$ if and only if $(0, z^{(1)}, z^{(2)}) \in \Pe$. Also, a holomorphic map $g: \G_2 \to \C$ induces a holomorphic map $g\circ \theta_{\Pe \to \G_2}$ on $\Pe$, where 
	\[
	\theta_{\Pe \to \G_2}: \Pe \to \G_2, (z^{(1)}, z^{(2)}, z^{(3)}) \mapsto (z^{(2)}, z^{(3)}).
	\]
	To begin with, we present the realization theorem for functions in $SA(\G_2)$.
	
	\begin{thm}\label{thm_realization_P_G2}
		For a function $g : \G_2 \to \C$, the following are equivalent:
		
		\begin{enumerate}
			\item[$(1)$] $g \in SA(\G_2)$;
			\item[$(2)$]   $f=g\circ \theta_{\Pe \to \G_2} \in SA(\Pe)$;
			
			\item[$(3)$] $(1-f(z)\overline{f(w)})k(z, w) \succcurlyeq 0$ for all $k \in AK(\Pe)$;
			\item[$(4)$] there exist $\xi, \nabla \in C(\overline{\D})^+_\Pe$ and $\delta \in \C_\Pe^+$ such that for all $z, w \in \Pe$,
			\[
			1-f(z)\overline{f(w)}=\xi(z, w)(1-\mathrm{J}(z)\overline{\mathrm{J}(w)})+\nabla(z, w)(1-\mathrm{j}(z)\overline{\mathrm{j}(w)})+(1-\frac{z^{(2)}\overline{w}^{(2)}}{4})\delta(z, w),
			\]
	where the maps $z\mapsto \mathrm{J}(z)$ and $z\mapsto \mathrm{j}(z)$ are as in \eqref{eqn_J(z)};
			\item[$(5)$] $f \in UC(\Pe)$.
		\end{enumerate}
	\end{thm}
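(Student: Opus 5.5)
The equivalence of $(2)$ with each of $(3)$, $(4)$ and $(5)$ is immediate from Theorem \ref{thm_realization_P} applied to $f=g\circ \theta_{\Pe \to \G_2}$. So the only new content is $(1)\iff(2)$, and we follow the pattern of the proof of Theorem \ref{thm_realization_P_D2}, with $\G_2$ embedded as the slice $\{(0,z^{(2)},z^{(3)})\}$ of $\Pe$ via Theorem \ref{thm_connect_P}.

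\noindent $(1)\implies(2)$. Let $g\in SA(\G_2)$ and $\underline{T}=(T_1,T_2,T_3)\in\mathfrak{M}_\Pe$.
\begin{itemize}
\item By property (4) of $\mathfrak{M}_\Pe$ listed in Section \ref{sec_penta}, $(T_2,T_3)\in\mathfrak{M}_{\G_2}$.
\item Since $\theta_{\Pe\to\G_2}$ is a coordinate projection, the Taylor functional calculus gives $f(\underline{T})=g(T_2,T_3)$, by the projection property and the composition rule for polynomial maps.
\item Hence $\|f(\underline{T})\|=\|g(T_2,T_3)\|\le 1$, and $f\in SA(\Pe)$.
\end{itemize}
Holomorphy of $f$ on $\Pe$ is clear, because $\theta_{\Pe\to\G_2}(\Pe)\subseteq\G_2$ by the fibred description of $\Pe$.

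\noindent $(2)\implies(1)$. First, $g$ is holomorphic on $\G_2$, because $g(z^{(2)},z^{(3)})=f(0,z^{(2)},z^{(3)})$ and $(0,z^{(2)},z^{(3)})\in\Pe$ by Theorem \ref{thm_connect_P}. Now take $(T_2,T_3)\in\mathfrak{M}_{\G_2}$ and set $\underline{T}=(0,T_2,T_3)$. This triple commutes, and:
\begin{itemize}
\item the first two defining conditions of $\mathfrak{M}_\Pe$ hold by assumption;
\item $\psi_\al(0,T_2,T_3)=0$, which has norm $<1$ for every $\al\in\DC$. Here we only need the invertibility of $I-\al T_2+\al^2T_3$, already noted in Section \ref{sec_penta} for pairs in $\mathfrak{M}_{\G_2}$.
\end{itemize}
So $\underline{T}\in\mathfrak{M}_\Pe$, and $\|g(T_2,T_3)\|=\|f(0,T_2,T_3)\|\le1$. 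Hence $g\in SA(\G_2)$.

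The only step needing care is the identification $f(\underline{T})=g(T_2,T_3)$ (respectively $f(0,T_2,T_3)=g(T_2,T_3)$) in the Taylor holomorphic functional calculus. We intend to justify it as follows:
\begin{itemize}
\item Approximate $g$ by $g_r(z^{(2)},z^{(3)})=g(rz^{(2)},r^2z^{(3)})$, which is holomorphic near $\Gamma$ and, by Oka--Weil, a uniform limit of polynomials there.
\item For polynomials the identity is trivial.
\item Pass to the limit using continuity of the functional calculus, then let $r\to1$ via the weak-operator convergence argument already used in the proof of the lemma preceding Theorem \ref{thm_int_P_II}. The scaled triples $r\cdot\underline{T}$ remain in $\mathfrak{M}_\Pe$ by quasi-balancedness.
\end{itemize}
This obstacle is mild. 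Everything else is direct verification.
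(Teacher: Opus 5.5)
Your proposal is correct and follows the paper's proof essentially step for step. You reduce to $(1)\iff(2)$ via Theorem \ref{thm_realization_P}, use $(T_2,T_3)\in\mathfrak{M}_{\G_2}$ for the forward direction, and use $(0,T_2,T_3)\in\mathfrak{M}_\Pe$ (since $\psi_\al(0,T_2,T_3)=0$) for the converse. The only addition is your justification of the identity $f(\underline{T})=g(T_2,T_3)$ by polynomial approximation of $g_r$, which the paper takes for granted as a standard property of the Taylor functional calculus.
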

	
	\begin{proof} In view of Theorem \ref{thm_realization_P}, it suffices to prove $(1) \iff (2)$.	Let $g \in SA(\G_2)$ and let $\underline{T}=(T_1, T_2, T_3) \in \mathfrak{M}_\Pe$. Evidently, $(T_2, T_3) \in \mathfrak{M}_{\G_2}$ and so, $\|g\circ \theta_{\Pe \to \G_2}(\underline{T})\|=\|g(T_2, T_3)\| \leq 1$. Therefore, $g\circ \theta_{\Pe \to \G_2} \in SA(\Pe)$. Conversely, suppose $g\circ \theta_{\Pe \to \G_2} \in SA(\Pe)$. We have by Theorem \ref{thm_connect_P} that $g \in \text{Hol}(\G_2)$ since $g(z^{(2)}, z^{(3)})=f(0, z^{(2)}, z^{(3)})$ for all $(z^{(2)}, z^{(3)}) \in \G_2$.  Take $(T_2, T_3) \in \mathfrak{M}_{\G_2}$. Clearly, $(0, T_2, T_3) \in \mathfrak{M}_\Pe$. Thus, $\|g(T_2, T_3)\|=\|g\circ \theta_{\Pe \to \G_2}(0, T_2, T_3)\| \leq 1$ and so, $g \in SA(\G_2)$.
	\end{proof}
	
	An application of Theorems \ref{thm_interpolation_P} and \ref{thm_realization_P_G2} provides the following interpolation theorem on $\G_2$.
	
	\begin{thm}\label{thm_interpolation_P_G2}
		Let $F=\{z_1, \dotsc, z_n\} \subseteq \G_2$ and let $\lm_1, \dotsc, \lm_n \in \DC$. Then the following are equivalent:
		\begin{enumerate}[leftmargin=*]
			\item[$(1)$] there exists $g \in SA(\G_2)$ such that $g(z_j)=\lm_j$ for $1 \leq j \leq n$;
			
			\item[$(2)$] there exists $f \in SA(\Pe)$ such that $f(0, z_j)=\lm_j$ for $1 \leq j \leq n$; \smallskip 
			
			\item[$(3)$] $\begin{bmatrix} (1-\lm_i\overline{\lm_j})k\left((0, z_i), (0, z_j)\right)\end{bmatrix}_{i, j=1}^n \geq 0$ for all $k \in AK(\Pe)$; \smallskip 
			
			\item[$(4)$] there exist $\xi, \nabla \in C(\DC)^+_F$ and $\delta \in \C_F^+$ such that for $w_i=(0, z_i)$ and $1 \leq i, j \leq  n$, we have
			\begin{small}
				\[
				1-\lambda_i\overline{\lambda}_j=\xi(w_i, w_j)(1-\mathrm{J}(w_i)\overline{\mathrm{J}(w_j)})+\nabla(w_i, w_j)(1-\mathrm{j}(w_i)\overline{\mathrm{j}(w_j)})+(1-(w_i^{(2)}\overline{w}_j^{(2)}\slash 4))\delta(w_i, w_j),
				\]
				where the maps $z \mapsto \mathrm{J}(z)$ and $z\mapsto \mathrm{j}(z)$ are as in \eqref{eqn_J(z)}.	
			\end{small}
		\end{enumerate}
		In the case when $(1)$ holds, one may choose $f=g\circ \theta_{\Pe \to \G_2}$ in $(2)$.
	\end{thm}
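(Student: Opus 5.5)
The plan mirrors the proof of Theorem \ref{thm_interpolation_P_D2}, with Theorem \ref{thm_realization_P_G2} in place of Theorem \ref{thm_realization_P_D2}. The chain $(2)\implies(3)\implies(4)\implies(2)$ is Theorem \ref{thm_interpolation_P} applied to the finite set $\{w_1,\dotsc,w_n\}\subseteq\Pe$ with $w_i=(0,z_i)$. These points lie in $\Pe$ by Theorem \ref{thm_connect_P}. The points $w_i$ are distinct since the $z_i$ are, and the kernels $\xi,\nabla,\delta$ in (4) are indexed by these points (identified with $F$). So no new argument is needed beyond checking that the data transfer verbatim.

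For $(1)\implies(2)$, I take $f=g\circ\theta_{\Pe\to\G_2}$. By Theorem \ref{thm_realization_P_G2}, $(1)\implies(2)$ there, this $f$ lies in $SA(\Pe)$. Moreover $f(0,z_j)=g(z_j)=\lm_j$, which also gives the final remark of the statement.

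For $(2)\implies(1)$, suppose $f\in SA(\Pe)$ satisfies $f(0,z_j)=\lm_j$, and define $g(z^{(2)},z^{(3)})=f(0,z^{(2)},z^{(3)})$ on $\G_2$. Theorem \ref{thm_connect_P} shows that the slice $\{0\}\times\G_2$ lies in $\Pe$, so $g$ is holomorphic on $\G_2$ as the restriction of a holomorphic function to an affine slice. Now take $(T_2,T_3)\in\mathfrak{M}_{\G_2}$. The triple $(0,T_2,T_3)$ is commuting, and $\psi_\al(0,T_2,T_3)=0$, so it belongs to $\mathfrak{M}_\Pe$. Hence $\|g(T_2,T_3)\|=\|f(0,T_2,T_3)\|\le 1$, so $g\in SA(\G_2)$ and $g(z_j)=\lm_j$.

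The only point needing care is that functional calculi are compatible, namely $f(0,T_2,T_3)=g(T_2,T_3)$. I would justify this by the composition (projection/slice) property of the Taylor holomorphic functional calculus, with $\sigma_T(0,T_2,T_3)=\{0\}\times\sigma_T(T_2,T_3)$. Alternatively, one can approximate $f$ on a neighbourhood of the spectrum by polynomials and use dilations $f(r\cdot z)$, as in the proof of Theorem \ref{thm_realization_P}. This is the same step already used implicitly in Theorem \ref{thm_realization_P_G2}, so I expect no real obstacle.
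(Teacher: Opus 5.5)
Your proposal is correct and matches the paper's proof: $(1)\Rightarrow(2)$ uses Theorem~\ref{thm_realization_P_G2} with $f=g\circ\theta_{\Pe\to\G_2}$, and $(2)\Rightarrow(1)$ restricts $f$ to the slice $\{0\}\times\G_2$, using that $(0,T_2,T_3)\in\mathfrak{M}_\Pe$ whenever $(T_2,T_3)\in\mathfrak{M}_{\G_2}$; the cycle $(2)\Rightarrow(3)\Rightarrow(4)\Rightarrow(2)$ is Theorem~\ref{thm_interpolation_P} applied to the points $w_i=(0,z_i)$. Your remark that the Taylor functional calculus satisfies $f(0,T_2,T_3)=g(T_2,T_3)$ makes explicit a step the paper uses without comment.
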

	
	\begin{proof}
		The part $(1) \implies (2)$ follows directly from Theorem \ref{thm_realization_P_G2} by choosing $f=g\circ \theta_{\Pe \to \G_2}$. We show that $(2) \implies (1)$. Suppose there exists $f \in SA(\Pe)$ such that $f(0, z_j)=\lm_j$ for $1 \leq j \leq n$. Define $g: \G_2 \to \C$ as $g(z^{(2)}, z^{(3)})=f(0, z^{(2)}, z^{(3)})$. By Theorem \ref{thm_connect_P}, $g \in \text{Hol}(\G_2)$. For $(T_2, T_3) \in \mathfrak{M}_{\G_2}$,  $(0, T_2, T_3) \in \mathfrak{M}_\Pe$ and so, $\|g(T_2, T_3)\|=\|f(0, T_2, T_3)\| \leq 1$. Thus, $g \in SA(\G_2)$ and $g(z_j)=\lambda_j$ for $ 1\leq j \leq n$. The implications $(2) \implies (3) \implies (4) \implies (2)$ follow from Theorem \ref{thm_interpolation_P}.
	\end{proof}

A commuting pair of operators $(S, P)$ is said to be a \textit{$\Gamma$-contraction} if $\|p(S, P)\| \leq \|p\|_{\infty, \Gamma}$ for all holomorphic polynomials in two variables. Here, $\Gamma=\overline{\G}_2$. See \cite{Tirtha_Pal_Roy} for a detailed study of $\Gamma$-contractions. Let $V$ be a subset of $\G_2$. A $\Gamma$-contraction $(S, P)$ is said to be \textit{subordinate} to $V$, if $\sigma_T(S, P) \subset V$ and $g(S, P)=0$ whenever $g$ is holomorphic in a neighbourhood of $V$ and $g|_V=0$. If $f$ is a function on $V$ which admits a holomorphic extension in a neighbourhood of $V$ and $(S, P)$ is subordinate to $V$, then we set $f(S, P)=g(S, P)$, where $g$ is any holomorphic extension of $f$ in a neighbourhood of $W$. The authors of \cite{Tirtha_Sau, Agler2019} established the following extension theorem on $\G_2$. Below, we provide an alternative proof here capitalizing on Theorem \ref{thm_ext_P}.	

\begin{thm}
	Let $V \subseteq \G_2$ and let $h \in \mathscr{HE}(V)$. Then there is a bounded holomorphic function $\widetilde{h}$ on $\G_2$ such that $\widetilde{h}|_V=h$ and $\|h\|_{\infty, V}=\|\widetilde{h}\|_{\infty, \G_2}$ if and only if $\|h(S, P)\| \leq \|h\|_{\infty, V}$ for every $\Gamma$-contraction $(S, P)$ subordinate to $V$.
\end{thm}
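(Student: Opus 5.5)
We mirror the proof of the bidisc extension theorem above, now using the embedding $(z^{(2)},z^{(3)})\mapsto(0,z^{(2)},z^{(3)})$ of Theorem \ref{thm_connect_P}. Set $W=\{(0,z^{(2)},z^{(3)}):(z^{(2)},z^{(3)})\in V\}$ and $W_0=\{0\}\times\G_2$, so that $W\subseteq W_0\subseteq\Pe$. Define $f$ on $W$ by $f(0,z^{(2)},z^{(3)})=h(z^{(2)},z^{(3)})$. If $h_0$ is a holomorphic extension of $h$ to a neighbourhood $U$ of $V$, then $F(z^{(1)},z^{(2)},z^{(3)})=h_0(z^{(2)},z^{(3)})$ is holomorphic on $\C\times U$ and extends $f$. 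Hence $f\in\mathscr{HE}(W)$ and $\|f\|_{\infty,W}=\|h\|_{\infty,V}$. We may assume $h\neq 0$, since the zero case is trivial.

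For $(\Rightarrow)$, let $\widetilde h$ be the norm-preserving extension. Put $\widetilde F=\widetilde h\circ\theta_{\Pe\to\G_2}$ and let $\widetilde f=\widetilde F|_{W_0}$. By Example \ref{eg_001}, $W_0$ has the extension property in $SA(\Pe)$, so Theorem \ref{thm_ext_P} gives $\|\widetilde f(\underline T)\|\le\|\widetilde f\|_{\infty,W_0}\le\|h\|_{\infty,V}$ for every $\underline T\in Q\Pe$ subordinate to $W_0$.

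Now take a $\Gamma$-contraction $(S,P)$ subordinate to $V$. We need $(0,S,P)\in Q\Pe$. The spectrum satisfies $\sigma_T(0,S,P)=\{0\}\times\sigma_T(S,P)\subseteq W\subseteq\Pe$. The polynomial bound $\|S\|\le 2$ follows from $|z^{(2)}|\le 2$ on $\Gamma$. For $\alpha\in\DC$, $\Phi_\alpha$ is holomorphic on a neighbourhood of $\sigma_T(S,P)$ with $|\Phi_\alpha|\le 1$ on $\Gamma$. Since $\sigma_T(S,P)\subseteq\G_2$, we can approximate $\Phi_\alpha$ by $\Phi_\alpha(r\cdot)$, or use that $\Gamma$ is a spectral set for $\Gamma$-contractions via Oka--Weil, to get $\|\Phi_\alpha(S,P)\|\le 1$. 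Finally, $\psi_\alpha(0,S,P)=0$. So $(0,S,P)\in Q\Pe$.

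Subordination of $(0,S,P)$ to $W$, and hence to $W_0$, follows exactly as in the bidisc proof. Given $G$ holomorphic near $W$ with $G|_W=0$, the function $g(z^{(2)},z^{(3)})=G(0,z^{(2)},z^{(3)})$ vanishes on $V$. By the functional calculus, $G(0,S,P)=g(S,P)=0$. Therefore $\|h(S,P)\|=\|\widetilde F(0,S,P)\|\le\|h\|_{\infty,V}$.

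For $(\Leftarrow)$, let $\underline T=(T_1,T_2,T_3)\in Q\Pe$ be subordinate to $W$. Applying subordination to $G=z^{(1)}$, which vanishes on $W$, gives $T_1=0$. The spectral mapping principle then gives $\sigma_T(T_2,T_3)\subseteq V$. Subordination of $(T_2,T_3)$ to $V$ follows by composing with $\theta_{\Pe\to\G_2}$, as in the bidisc case.

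The main step is to show that $(T_2,T_3)$ is a $\Gamma$-contraction. We have $\|T_2\|\le 2$ and $\|\Phi_\alpha(T_2,T_3)\|\le1$ for all $\alpha\in\DC$. For $0<r<1$, the quasi-balanced scaling $(rT_2,r^2T_3)$ gives strict inequalities $\|rT_2\|<2$ and $\|\Phi_\alpha(rT_2,r^2T_3)\|<1$. I expect this to hold because $\Phi_\alpha$ of the scaled pair is a norm-nonincreasing analytic deformation; the cleanest route is the characterization of $\Gamma$-contractions through $\Phi_\alpha$ from Agler--Young. So $(rT_2,r^2T_3)\in\mathfrak M_{\G_2}$, which has $\Gamma$ as a spectral set by Theorem 1.5 of \cite{AglerYoung2003}. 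Hence $\|p(rT_2,r^2T_3)\|\le\|p\|_{\infty,\Gamma}$, and letting $r\to1$ shows that $(T_2,T_3)$ is a $\Gamma$-contraction. This scaling step is the one I expect to be the main obstacle. If direct scaling of the $\Phi_\alpha$ inequality is awkward, I would instead quote Agler--Young's theorem that $\|T_2\|\le 2$ together with $\|\Phi_\alpha(T_2,T_3)\|\le1$ for all $\alpha$ already characterizes $\Gamma$-contractions.

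With that in hand, the hypothesis gives $\|f(\underline T)\|=\|h(T_2,T_3)\|\le\|f\|_{\infty,W}$. Theorem \ref{thm_ext_P} then yields $\widehat F\in H^\infty(\Pe)$ with $\widehat F|_W=f$ and $\|\widehat F\|_{\infty,\Pe}=\|f\|_{\infty,W}$. Set $\widetilde h(z^{(2)},z^{(3)})=\widehat F(0,z^{(2)},z^{(3)})$. This function is holomorphic on $\G_2$ by Theorem \ref{thm_connect_P}, extends $h$, and has norm at most $\|h\|_{\infty,V}$, hence exactly $\|h\|_{\infty,V}$.
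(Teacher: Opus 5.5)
Your proposal is correct and is essentially the paper's proof. It uses the same embedding $W=\{0\}\times V\subseteq W_0=\{0\}\times\G_2$, Example \ref{eg_001} with Theorem \ref{thm_ext_P} for $(\Rightarrow)$, and subordination with $G=z^{(1)}$ followed by Theorem \ref{thm_ext_P} for $(\Leftarrow)$; where you argue via the spectral-set property and scaling, the paper simply cites Theorem 1.5 of \cite{AglerYoung2003} in both directions (your fallback), and the scaling step you flag as the main obstacle is immediate from the identity $\Phi_\alpha(rz^{(2)},r^2z^{(3)})=r\,\Phi_{r\alpha}(z^{(2)},z^{(3)})$, which gives $\|\Phi_\alpha(rT_2,r^2T_3)\|\le r<1$, so your route through $\mathfrak{M}_{\G_2}$ also works.
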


\begin{proof} 
	Set $W = \{0\} \times V$ and $W_0=\{0\} \times \G_2$. Clearly, $W \subseteq W_0 \subseteq \Pe$. Define $f: W \to \C$ as $f(0, z^{(2)}, z^{(3)})=h(z^{(2)}, z^{(3)})$ for all $(z^{(2)}, z^{(3)}) \in V$.  Since $h \in \mathscr{HE}(V)$, it extends to a holomorphic map $h_0$ on a neighbourhood $U$ of $V$. Then the map $F(z^{(1)},z^{(2)},z^{(3)}) = h_0(z^{(2)},z^{(3)})$ is a holomorphic extension of $f$ to $\mathbb{C} \times U$ such that $F|_W=f$ and so, $f \in \mathscr{HE}(W)$. 
	
	\smallskip 
	
$(\implies)$ Suppose there is a bounded map $\widetilde{h} \in \text{Hol}(\G_2)$ such that $\widetilde{h}|_V=h$ and $\|h\|_{\infty, V}=\|\widetilde{h}\|_{\infty, \G_2}$. The map $\widetilde{f}: W_0 \to \C$ given by $\widetilde{f}(0, z^{(2)}, z^{(3)})=\widetilde{h}(z^{(2)}, z^{(3)})$ has a holomorphic extension on $\Pe$, because $\widetilde{F}(z^{(1)}, z^{(2)}, z^{(3)})= \widetilde{h}(z^{(2)}, z^{(3)})$ is holomorphic on $\Pe$ satisfying $\widetilde{F}|_{W_0}=\widetilde{f}$. So, $\widetilde{f} \in \mathscr{HE}(W_0)$. By Example \ref{eg_001}, $W_0=\{0\} \times \G_2$ has the extension property in $SA(\Pe)$ and by Theorem \ref{thm_ext_P}, 
	\begin{equation}\label{eqn_4004}
	\|\widetilde{f}(\underline{T})\| \leq \|\widetilde{f}\|_{\infty, W_0}\leq \|\widetilde{h}\|_{\infty, \G_2}=\|h\|_{\infty, V}  
	\end{equation}
	for every $\underline{T} \in Q\Pe$ subordinate to $W_0$. Suppose $(S,P)$ is a $\Gamma$-contraction subordinate to $V$. Note that $\sigma_T(0,S,P) = \{0\} \times \sigma_T(S,P) \subseteq \{0\} \times V = W \subseteq \Pe$. It now follows from Theorem 1.5 in \cite{AglerYoung2003} that $\|P\| \leq 2$ and $\|\Phi_\al(S, P)\| \leq 1$ for all $\alpha \in \DC$. Consequently, $(0, S, P) \in Q\Pe$. Let $G$ be a holomorphic map in a neighbourhood $U \subseteq \mathbb{C}^3$ of $W$ such that $G|_W = 0$. Define $\xi : \mathbb{C}^2 \to \mathbb{C}^3$ as $\xi(z^{(2)}, z^{(3)}) = (0, z^{(2)}, z^{(3)})$ and set $g(z^{(2)}, z^{(3)}) = G(0, z^{(2)}, z^{(3)})$ for $(z^{(2)}, z^{(3)}) \in \xi^{-1}(U)$. Clearly, $g$ is holomorphic in a neighbourhood of $V$. For $(z^{(2)}, z^{(3)}) \in V$, we have that $g(z^{(2)}, z^{(3)}) = G(0, z^{(2)}, z^{(3)}) = 0$ as $G|_W=0$. Since $(S,P)$ is subordinate to $V$ and $g|_V=0$, it follows that
	$g(S,P) = 0$. By functional calculus, $G(0,S,P) = g(S,P) = 0$.
	Thus, $(0,S,P)$ is subordinate to $W$. Since $W \subseteq W_0$, it follows that $(0,S,P)$ is subordinate to $W_0$.  By \eqref{eqn_4004}, $
	\|h(S, P)\|=\|\widetilde{h}(S, P)\|=\|\widetilde{F}(0, S, P)\|=\|\widetilde{f}(0, S, P)\| \leq \|h\|_{\infty, V}
	$
	for every $\Gamma$-contraction $(S,P)$ subordinate to $V$.
	
	\smallskip 
	
$(\impliedby)$ Let $\|h(S,P)\| \le \|h\|_{\infty,V}$ for every $\Gamma$-contraction $(S,P)$ subordinate to $V$. Let $\underline{T} = (T_1,T_2,T_3) \in Q\Pe$ be subordinate to $W$.  Since $(T_1, T_2, T_3) \in Q\Pe$, $\|T_2\| \leq 2$ and $\|\Phi_\al(T_2, T_3)\| \leq 1$ for all $\alpha \in \D$. It follows from Theorem 1.5 in \cite{AglerYoung2003} that $(T_2, T_3)$ is a $\Gamma$-contraction. Moreover, $\sigma_T(\underline{T}) \subseteq W = \{0\} \times V$.
	Let $G(z^{(1)},z^{(2)},z^{(3)}) = z^{(1)}$. Clearly, $G$ is holomorphic on $\C^3$ and $G|_W = 0$. Since $\underline{T}$ is subordinate to $W$, it follows that $T_1 = G(T_1,T_2,T_3) = 0$ and so, $\underline{T} = (0,T_2,T_3)$. Next, we show that $(T_2, T_3)$ is subordinate to $V$. By spectral mapping principle, $\sigma_T(T_2, T_3) \subseteq V$. Suppose $g_0$ is holomorphic in a neighbourhood $U$ of $V$ with $g_0|_V=0$. Then the function $G_0(z^{(1)},z^{(2)},z^{(3)})= g_0(z^{(2)},z^{(3)})$ is holomorphic on $\mathbb{C} \times U$, which is a neighbourhood of $W$ and $G_0|_W=0$. Since $\underline{T}$ is subordinate to $W$, we have that $g_0(T_2,T_3) = G_0(0,T_2,T_3) = G_0(T_1,T_2,T_3) = 0$.
	Hence, $(T_2,T_3)$ is subordinate to $V$. By hypothesis,
	$
	\|f(\underline{T})\| = \|f(0,T_2,T_3)\| = \|h(T_2,T_3)\| \le \|h\|_{\infty,V} = \|f\|_{\infty,W}.
	$
By Theorem \ref{thm_ext_P}, there exists $\widehat{F} \in H^\infty(\Pe)$ such that $\widehat{F}|_W = f$ and $\|\widehat{F}\|_{\infty,\Pe} = \|f\|_{\infty,W}$. Define $\widehat{h} : \G_2 \to \C$ by $\widehat{h}(z^{(2)},z^{(3)})= \widehat{F}(0,z^{(2)},z^{(3)})$, which is holomorphic on $\G_2$. For $(z^{(2)},z^{(3)}) \in V$, we have that $\widehat{h}(z^{(2)},z^{(3)}) = \widehat{F}(0,z^{(2)},z^{(3)}) = f(0,z^{(2)},z^{(3)})= h(z^{(2)},z^{(3)})$ and thus, $\widehat{h}|_V = h$. Furthermore,
	$\|\widehat{h}\|_{\infty,\G_2} \le \|\widehat{F}\|_{\infty,\Pe} = \|f\|_{\infty,W} = \|h\|_{\infty,V}$ and so, $\|\widehat{h}\|_{\infty,\G_2} = \|h\|_{\infty,V}$. 
	\end{proof}
	
	\noindent \textbf{Funding.} The first named author is supported in part by the “Core Research Grant” with Award No. CRG/2023/005223 from Anusandhan National Research Foundation (ANRF) of Govt. of India. The second named author is supported via the IIT Bombay RDF Grant of the first named author with Project Code RI/0115-10001427.


\begin{thebibliography}{9}
		
		\vspace{0.3cm}
		
		\bibitem{Abouhajar}
		A. A. Abouhajar, M. C. White and N. J. Young, \textit{A Schwarz lemma for a domain related to $\mu$-synthesis}, J. Geom. Anal., 17 (2007), 717 -- 750.
		
		\bibitem{AbrahamseI}
		M. B. Abrahamse, \textit{The Pick interpolation theorem for finitely connected domains}, Michigan Math. J., 26 (1979), 195 -- 203. 
		
		
		
		\bibitem{Agler1990}
		J . Agler, \textit{On the representation of certain holomorphic functions defined on a polydisc}, In: Topics in operator theory: Ernst D. Hellinger memorial volume, Oper. Theory Adv. Appl., 48 (1990), 47 -- 66.
		
		\bibitem{Agler_2023}
		J. Agler, L. Kosi\'{n}ski and J. E. McCarthy, \textit{Complete norm-preserving extensions of holomorphic functions}, Israel J. Math., 255 (2023), 251 -- 263.
		
		
		\bibitem{AglerIV}
		J. Agler, Z. A. Lykova and N. J. Young, \textit{The complex geometry of a domain related to $\mu$-synthesis}, J. Math. Anal. Appl., 422 (2015), 508 -- 543. 
		
		\bibitem{Agler2019}
		J. Agler, Z. A. Lykova and N. J. Young, \textit{Geodesics, retracts, and the norm-preserving extension property in the symmetrized bidisc}, Mem. Amer. Math. Soc., 258 (2019), no. 1242, vii+108 pp.
		
		
		\bibitem{Agler_McCarthyI}
		J. Agler and J. E. McCarthy, \textit{Nevanlinna-Pick interpolation on the bidisk}, J. Reine Angew. Math., 506 (1999), 191 -- 204.
		
		
		\bibitem{Agler_McCarthy}
		J. Agler and J. E. McCarthy, \textit{Pick interpolation and Hilbert function spaces}, Grad. Stud. Math., 44, Amer. Math. Soc., Providence, RI, 2002; MR1882259. 
		
		\bibitem{Agler_McCarthy_2003}
	J. Agler and J. E. McCarthy, \textit{Norm preserving extensions of holomorphic functions from subvarieties of the bidisk},	Ann. of Math., 157 (2003), 289 -- 312.
	
		
		\bibitem{AglerYoung}
		J. Agler and N. J. Young, \textit{A commutant lifting theorem for a domain in $\C^2$ and spectral interpolation}, J. Funct. Anal., 161 (1999), 452 -- 477. 
		
		
		
		\bibitem{AglerI}
		J. Agler and N. J. Young, \textit{The two-point spectral Nevanlinna-Pick problem}, Integral Equations Operator Theory, 37 (2000), 375 -- 385.
		
		\bibitem{AglerYoung2003}
		J. Agler and N. J. Young, \textit{A model theory for $\Gamma$-contractions}, J. Operator Theory, 49 (2003), 45 -- 60. 
		
		
		\bibitem{Agler2004_II}
		J. Agler and N. J. Young, \textit{The hyperbolic geometry of the symmetrized bidisc}, J. Geom. Anal., 14 (2004), 375 -- 403.
		
		\bibitem{AglerYoung2017}
		J. Agler and N. J. Young, \textit{Realization of functions on the symmetrized bidisc}, J. Math. Anal. Appl., 453 (2017), 227 -- 240.  
		

		\bibitem{Ando}
		T. And\^{o}, \textit{On a pair of commutative contractions}, Acta Sci. Math. (Szeged), 24 (1963), 88 -- 90.
		
		\bibitem{V_Arkh}
		A. V. Arkhangel'skii and L. S. Pontryagin, \textit{General topology-$I$}, Springer, Berlin, 1990.
		
		
		\bibitem{Ball_Guerra} 
		J. A. Ball and M. D. Guerra Huam\'{a}n,	\textit{Test functions, Schur-Agler classes and transfer-function realizations: the matrix-valued setting},	Complex Anal. Oper. Theory, 7 (2013), 529 -- 575. 
		
		
		\bibitem{Ball}
		J. A. Ball and T. T. Trent, \textit{Unitary colligations, reproducing kernel Hilbert spaces, and Nevanlinna-Pick interpolation in several variables}, J. Funct. Anal., 157 (1998), 1 -- 61.
		
	
		
		\bibitem{Tirtha_Pal_Roy}
		T. Bhattacharyya, S. Pal and S. Shyam Roy, \textit{Dilations of $\Gamma$-contractions by solving operator equations}, Adv. Math., 230 (2012), 577 -- 606.
		
		
		\bibitem{Tirtha_Sau}
		T. Bhattacharyya and H. Sau, \textit{Holomorphic functions on the symmetrized bidisk- realization, interpolation and extension}, J. Funct. Anal., 274 (2018), 504 -- 524.
		
		
		\bibitem{Costara2005}
		C. Costara, \textit{The $2 \times 2$ spectral Nevanlinna-Pick problem}, J. London Math. Soc., 71 (2005), 684 -- 702. 
		

		
		
		\bibitem{Doyle}
		J. C. Doyle and G. Stein, \textit{Multivariable feedback design: concepts for a classical/modern synthesis}, IEEE Transactions on Automatic Control, 26 (1981), 4 -- 16. 
		
		
		\bibitem{Drit2007_II}
		M. A. Dritschel, S. Marcantognini and S. McCullough, \textit{Interpolation in semigroupoid algebras}, J. Reine Angew. Math., 606 (2007), 1 -- 40. 
		
		\bibitem{Drit2007_I}
		M. A. Dritschel and S. McCullough, \textit{Test functions, kernels, realizations and interpolation}, in: Operator theory, structured matrices, and dilations, Theta Ser. Adv. Math., 7 (2007), 153 -- 179. 
		
		
		
		\bibitem{Foias_Frazho}
		C. Foias and A. E. Frazho, \textit{The commutant lifting approach to interpolation problem}, Birkh\"{a}user, Berlin, 1990. 
		
		
		\bibitem{Francis}
		B. A. Francis, \textit{A course in $H_\infty$ control theory}, Lecture Notes in Control and Information Sciences, Vol. 88, London: Springer-Verlag, 1987. 
		
		
			\bibitem{Jain}
		S. Jain, S. Kumar, M. K. Mal and P. Pramanick, \textit{Function theory on tetrablock: realization, interpolation, extension and Toeplitz Corona theorem}, arXiv: 2505.23492.
		
		
		\bibitem{Jury}
		M. T. Jury, G. Knese and S. McCullough, \textit{Agler interpolation families of kernels}, Oper. Matrices, 3 (2009), 571 -- 587. 
		
		
			\bibitem{Kosinski}
		L. Kosi\'{n}ski and W. Zwonek, \textit{Nevanlinna-Pick problem and uniqueness of left inverses in convex domains, symmetrized bidisc and tetrablock}, J. Geom. Anal., 26 (2016), 1863 -- 1890. 
		
		\bibitem{Kosinski_2021}
		L. Kosi\'{n}ski and W. Zwonek, \textit{Extension property and universal sets},
		Canad. J. Math., 73 (2021), 717 -- 736.
		
				\bibitem{Mittal}
		M. Mittal, \textit{Function theory on the quantum annulus and other domains},
		Thesis (Ph.D.)-University of Houston, ProQuest LLC, Ann Arbor, MI, 2010, 141 pp.
		
		\bibitem{Nevanlinna}
		R. Nevanlinna, \textit{\"{U}ber beschr\"{a}nkte Funktionen, die in gegebenen Punkten vorgeschriebene Werte annehmen}, Ann. Acad. Sci. Fenn. Ser. A, 13 (1919), 1 -- 71. 
		
		\bibitem{Nikolov}
		N. Nikolov, P. Pflug and  P. J. Thomas, \textit{Spectral Nevanlinna-Pick and Carathéodory-Fej\'{e}r problems for $n \leq 3$}, Indiana Univ. Math. J., 60 (2011), 883 -- 893.
		
		\bibitem{Pal_penta}
		S. Pal and N. Tomar, \textit{Operators associated with the pentablock and their relations with biball and symmetrized bidisc}, Ann. Fenn. Math., 51 (2026), 287 -- 324.
		
		
		\bibitem{Pick} 
		G. Pick, \textit{\"{U}ber die Beschr\"{a}nkungen analytischer Funktionen, welche durch vorgegebene Funktionswerte bewirkt werden}, Math. Ann., 77 (1916), 7 -- 23. 
		
			\bibitem{Scheidemann}
		V. Scheidemann, \textit{Introduction to complex analysis in several variables}, Birkh\"{a}user Verlag, Basel, 2005. 
		
		\bibitem{Vasilescu}
		F. -H. Vasilescu,\textit{ Analytic functional calculus and spectral decompositions}, 1 (1982), Math. Appl. (East European Ser.), D. Reidel Publishing Co., Dordrecht. 
		
		
	\end{thebibliography}
\end{document}